\documentclass[oneside,a4paper,12pt,reqno]{amsart}
\usepackage{typearea}
\typearea{11}

\usepackage[utf8]{inputenc}
\usepackage[UKenglish]{babel}
\usepackage[T1]{fontenc}

\usepackage[shortlabels]{enumitem}

\usepackage{dsfont}
\usepackage{mathtools}
\usepackage{mleftright}

\usepackage{microtype}
\usepackage[colorlinks=false]{hyperref}

\usepackage{tikz}
\usetikzlibrary{calc,backgrounds}

\mathtoolsset{centercolon}
\numberwithin{equation}{section}
\numberwithin{figure}{section}

\theoremstyle{plain}
\newtheorem{thm}{Theorem}[section]
\newtheorem{prop}[thm]{Proposition}
\newtheorem{cor}[thm]{Corollary}
\newtheorem{lem}[thm]{Lemma}
\theoremstyle{definition}

\newtheorem{rem}[thm]{Remark}
\newtheorem*{rem*}{Remark}

\newtheorem*{conclusion*}{Conclusion}

% ===========================================================================

\renewcommand{\phi}{\varphi}

\newcommand{\mathdcl}[1]{{\ifstrequal{#1}{l}{l}{\textup{#1}}}}
\newcommand{\loc}{\mathdcl{loc}}
\newcommand{\cpt}{\mathdcl{c}}
\newcommand{\bdd}{\mathdcl{b}}
\newcommand{\low}{\mathdcl{low}}
\newcommand{\high}{\mathdcl{high}}

\newcommand{\RR}{\mathbb{R}}
\newcommand{\NN}{\mathbb{N}}
\newcommand{\eps}{\varepsilon}
\newcommand{\one}{\mathds{1}}
\newcommand{\setcolon}{\colon}

\newcommand{\clos}[1]{\overline{#1}}
\newcommand{\restrict}[2]{\ensuremath{#1\raisebox{-0.4ex}{$|$}\strut_{#2}}}

\NewDocumentCommand{\dx}{O{x}}{\,\mathrm{d}#1}

\DeclarePairedDelimiter\norm{\lVert}{\rVert}
\DeclarePairedDelimiter\abs{\lvert}{\rvert}

\DeclareMathOperator{\capacity}{cap}
\DeclareMathOperator{\dist}{dist}

\newcommand{\emphdef}[1]{\textbf{\boldmath #1\unboldmath}}

\newlist{romanenum}{enumerate}{1}
\setlist[romanenum]{label=\textup{(\roman*)},ref=\textup{(\roman*)},itemsep=0em,topsep=1ex}

\newlist{alenum}{enumerate}{1}
\setlist[alenum]{label=\textup{(\alph*)},ref=\textup{(\alph*)},itemsep=0em,topsep=1ex}

\newlist{parenum}{enumerate}{1}
\setlist[parenum]{label=\textup{\arabic*.},ref=\textup{\arabic*.},wide,noitemsep}

% https://tex.stackexchange.com/questions/107294/measure-restriction-sign
\newcommand{\niv}{\mathbin{\vrule height 1.6ex depth 0pt width
0.13ex\vrule height 0.13ex depth 0pt width 1.3ex}}

% https://tex.stackexchange.com/questions/446677/how-can-i-define-a-command-that-uses-round-parentheses-around-its-arguments
\ExplSyntaxOn
\NewDocumentCommand{\p}{sor()}
{
  \IfBooleanTF{#1}
  {
   \mleft( #3 \mright)
  }{
    \IfNoValueTF{#2}
    {
      \delimitershortfall=10pt\delimiterfactor=700
      \mleft( #3 \mright)
    }{
       \group_begin:
       \mathopen{#2(} #3 \mathclose{#2)}
       \group_end:
    }
  }
}
\ExplSyntaxOff

\hyphenation{Lip-schitz Grund-lehren Wis-sen-schaf-ten Fried-richs mo-no-graphs semi-group semi-groups}
% ===========================================================================

\title[Variational solutions of the Dirichlet problem]{Variational solutions of the Dirichlet problem, Lebesgue's cusp and non-local properties}

\author[W. Arendt]{Wolfgang Arendt}
\address{Wolfgang Arendt\\Institute of Applied Analysis\\Ulm University\\89069 Ulm\\Germany}
\email{wolfgang.arendt@uni-ulm.de}

\author[D. Daners]{Daniel Daners}
\address{Daniel Daners\\School of Mathematics and Statistics\\The University of Sydney\\NSW 2006\\Australia}
\email{daniel.daners@sydney.edu.au}

\author[M. Sauter]{Manfred Sauter}
\address{Manfred Sauter\\Faculty of Mathematics and Economics\\Ulm University\\89069 Ulm\\Germany}
\email{manfred.sauter@uni-ulm.de}

\dedicatory{Dedicated to Robert Denk on the occasion of his 60\textsuperscript{th} birthday}

\date{11 December 2025}

\keywords{Dirichlet problem, variational solution, Lebesgue's domain, singular points, non-locality}
\subjclass[2020]{Primary: 31B25; Secondary: 35J20, 35J67, 35B65, 35D99}

\begin{document}
\begin{abstract}
A recent result from~\cite{AtES24} allows one to define variational solutions of the Dirichlet problem for general continuous boundary data.
We establish basic properties of this notion of solution and show that it coincides with the Perron solution.
Variational solutions can elegantly be characterised in terms of the given boundary function when the variational solution has finite energy.
However, it is impossible to decide in terms of the regularity of the given boundary function when a classical solution exists.
We demonstrate this by analysing Lebesgue's cusp, and more precisely Lebesgue's domain which is associated with the potential of a thin rod with mass density going to zero at one end.
We also show that the non-continuity of the Perron solution at a singular point is a generic and non-local property.
\end{abstract}

\maketitle

\section{Introduction}

Throughout we suppose that $\Omega$ is a connected bounded open subset of $\RR^d$, $d\ge 2$, with boundary $\partial\Omega$.
Given a function $\phi\in C(\partial\Omega)$, the \emphdef{Dirichlet problem} asks to
\begin{equation}
  \tag*{$D(\varphi)$}
  \label{eq:D}
  \left\{
    \begin{aligned}
      &\text{find $u\in C(\clos{\Omega})$ which is harmonic on $\Omega$} \\
      &\text{such that $\restrict{u}{\partial\Omega}=\phi$.}
    \end{aligned}
  \right.
\end{equation}
We call such a function $u$ a \emphdef{classical solution} of~\ref{eq:D}.
However, a classical solution does not always exist.

In this paper we consider a generalised solution of the Dirichlet problem which is described as follows.
\begin{thm}\label{thm:1.1}
Given $\phi\in C(\partial\Omega)$, there exist a unique harmonic function $u_\phi$ on $\Omega$ and an extension $\Phi\in C(\clos{\Omega})$ of $\phi$ such that $\Phi-u_\phi\in H^1_0(\Omega)$.
\end{thm}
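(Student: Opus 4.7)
The plan is to prove existence via the cited extension result from~\cite{AtES24} combined with the Lax--Milgram theorem, and then to establish uniqueness of $u_\phi$ by reducing it to a statement about harmonic functions admitting a particular decomposition. For existence, we first invoke~\cite{AtES24} to obtain an extension $\Phi \in C(\clos\Omega) \cap H^1(\Omega)$ of $\phi$. Since the Dirichlet form $(v, \eta) \mapsto \int_\Omega \nabla v \cdot \nabla \eta \dx$ is continuous and coercive on $H^1_0(\Omega)$, the Lax--Milgram lemma (equivalently, orthogonal projection onto $H^1_0(\Omega)$ with respect to the Dirichlet inner product) produces a unique $v \in H^1_0(\Omega)$ with
\[
  \int_\Omega \nabla v \cdot \nabla \eta \dx = \int_\Omega \nabla \Phi \cdot \nabla \eta \dx \quad \text{for every } \eta \in H^1_0(\Omega).
\]
Then $u_\phi := \Phi - v$ is weakly harmonic in $\Omega$, hence classically harmonic by Weyl's lemma, and $\Phi - u_\phi = v \in H^1_0(\Omega)$ by construction.

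For uniqueness, suppose $(u_1, \Phi_1)$ and $(u_2, \Phi_2)$ both satisfy the conclusion, and form $w := u_1 - u_2$, $V := (\Phi_1 - u_1) - (\Phi_2 - u_2) \in H^1_0(\Omega)$ and $\Psi := \Phi_1 - \Phi_2 \in C(\clos\Omega)$ with $\restrict{\Psi}{\partial\Omega} = 0$. Then $w$ is harmonic on $\Omega$ and $w = \Psi - V$, so the claim reduces to showing that any such $w$ must vanish. The strategy is to deduce that $w \in H^1_0(\Omega)$; once this is established, testing the weak harmonic equation against $w$ itself yields $\int_\Omega \abs{\nabla w}^2 \dx = 0$, forcing $w \equiv 0$ on the connected domain $\Omega$. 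To place $w$ into $H^1_0(\Omega)$ one uses cutoffs $\chi_k \in C_c^\infty(\Omega)$ increasing pointwise to $1$, exploiting that $\Psi$ is uniformly small near $\partial\Omega$ (by continuity and the boundary vanishing of $\Psi$) while $(1-\chi_k)V \to 0$ in $H^1(\Omega)$ (by density of $C_c^\infty(\Omega)$ in $H^1_0(\Omega)$), and combines this with a Caccioppoli-type estimate on $w$ to control $\nabla w$ near $\partial\Omega$.

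The main obstacle is the uniqueness step: the function $w$ is a priori only smooth in $\Omega$ and need not be globally in $H^1(\Omega)$, and the two ingredients of its decomposition live in spaces of quite different character, namely pointwise continuous with boundary value $0$ versus $H^1_0$, which carries no pointwise boundary information. Converting the pointwise boundary decay of $\Psi$ together with the $H^1$-approximation properties of $V$ into a global $H^1_0$-bound for $w$ requires a careful combination of cutoffs and truncations, and this is where the bulk of the technical work lies. A natural fallback is to defer uniqueness to the identification of $u_\phi$ with the Perron solution announced in the abstract: since the Perron solution is canonically associated with $\phi$, its uniqueness would immediately yield uniqueness of $u_\phi$.
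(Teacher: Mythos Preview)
Your existence argument has a genuine gap. You invoke~\cite{AtES24} to produce an extension $\Phi \in C(\clos\Omega) \cap H^1(\Omega)$, but the extension result actually yields only $\Phi \in C(\clos\Omega)$ with $\Delta\Phi \in H^{-1}(\Omega)$; in general no $H^1$ extension exists. Indeed, by the classical Prym--Hadamard example there is $\phi \in C(\partial\mathds{D})$ on the unit disc whose Poisson integral has infinite Dirichlet energy, and by Theorem~\ref{thm:2.7} this $\phi$ admits no extension in $C(\clos{\mathds{D}}) \cap H^1(\mathds{D})$. So your Lax--Milgram step, which tests against $\nabla\Phi$, is not available. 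The paper instead uses that $\Delta\Phi \in H^{-1}(\Omega)$ allows one to solve $\Delta v = \Delta\Phi$ for $v \in H^1_0(\Omega)$ via Riesz--Fr\'echet, and then $u_\phi := \Phi - v$ is weakly harmonic; the key subtlety is that $\Phi$ itself need not lie in $H^1(\Omega)$, so one genuinely cannot pair $\nabla\Phi$ with $\nabla\eta$.

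Your uniqueness outline is in the right spirit but the suggested mechanism (cutoffs plus a Caccioppoli estimate) is not how the paper proceeds, and it is not clear your sketch closes: Caccioppoli controls interior gradients but does not by itself convert the pointwise boundary decay of $\Psi$ into an $H^1_0$ bound on $w$. The paper's route is Lemma~\ref{lem:2.2}: if $v \in C_0(\Omega)$ and $\Delta v \in H^{-1}(\Omega)$, then $v \in H^1_0(\Omega)$. The proof first gets $v \in H^1_{\mathrm{loc}}(\Omega)$ via Weyl, then uses the \emph{truncations} $(v-\delta)^+ \in H^1_\cpt(\Omega)$ as test functions to obtain a uniform bound $\|\nabla(v-\delta)^+\|_{L^2} \le \|\Delta v\|_{H^{-1}}$ independent of $\delta$, and concludes by weak compactness as $\delta \downarrow 0$. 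Applied to $\Psi = \Phi_1 - \Phi_2$, this gives $\Psi \in H^1_0(\Omega)$, hence $w = \Psi - V \in H^1_0(\Omega)$, and then $\int_\Omega |\nabla w|^2 = 0$ as you say. Your fallback to the Perron identification would be circular here, since that identification is established in the paper only after uniqueness of $u_\phi$ is in hand.
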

We call $u_\phi$ the \emphdef{variational solution} of~\ref{eq:D}.
It attains the given boundary value $\phi$ in the Sobolev sense.
The existence of such a variational solution is difficult to prove and was an open problem for general $\phi\in C(\partial\Omega)$ until recently.

In the first part of this paper we start with a simple proof of Theorem~\ref{thm:1.1}, making use of a nontrivial extension result that we shall take as given.
We then establish diverse properties of the variational solution defined as above.
We show that it is well-defined; i.e.\ $u_\phi$ does not depend on the extension $\Phi$ of $\phi$ and it coincides with the classical solution of~\ref{eq:D} whenever a classical solution exists.
Moreover, we show that the weak maximum principle is valid for variational solutions.
In Section~\ref{sec:3} we will see that $u_\phi$ coincides with the Perron solution.
This notion was defined by Perron in 1923~\cite{Per1923} and has been a subject of intense research.
One subject is the way in which $u_\phi(x)$ attains the value $\phi(z)$ as $x\to z$.
So Theorem~\ref{thm:1.1} can be interpreted as a new description of the behaviour of the Perron solution at the boundary.

In the second part of the paper, from Section~\ref{sec:4} on, we investigate the behaviour of $u_\phi(x)$ at a fixed singular point $z_0\in\partial\Omega$, i.e.\ a point where $\lim_{x\to z_0} u_\phi(x)=\phi(z_0)$ is violated for some $\phi\in C(\partial\Omega)$.

In a single page paper Lebesgue~\cite{Leb1913} considered a potential in $\RR^3$ of a thin rod modelled by a simple mass distribution concentrated along a line segment.
He chooses two suitable level sets of that potential and considers the bounded, open, simply connected region enclosed by these level sets.
We call this region \emphdef{Lebesgue's domain}.

Lebesgue's domain has a unique singular point $z_0$ which is the tip of an inward pointing cusp, the famous \emphdef{Lebesgue cusp}.
Taking the intersection of Lebesgue's domain with a small ball, one obtains a simply connected domain with connected boundary and a singular point.
This is of great interest since in $\RR^2$ each simply connected domain is \emphdef{Dirichlet regular}; i.e.\ $u_\phi$ is a classical solution for all $\phi\in C(\partial\Omega)$.
Our point is to consider the original Lebesgue domain, whose boundary has two connected components.
It demonstrates two striking features of the generalised solution $u_\phi$.

\begin{enumerate}[1.]
\item It is impossible to deduce from the regularity properties of $\phi$ whether or not $u_\phi$ is a classical solution.
The boundary of Lebesgue's domain is the disjoint union of two closed sets $\Gamma_2$ and $\Gamma_{\frac{1}{2}}$ (which are level sets of the considered potential).
If $\phi$ is constant on each of these two closed sets with two different constants, $u_\phi$ is not a classical solution.
To prove this, the variational definition of $u_\phi$ will be most useful.

\item The irregular behaviour of $u_\phi(x)$ as $x\to z_0$ is a non-local and unstable phenomenon.
For $\phi_0\equiv 0$ one has $u_{\phi_0}\equiv 0$, but if we perturb $\phi_0$ by a small compactly supported positive smooth bump in the neighbourhood of some point far away from $z_0$, then $u_\phi$ is no longer a classical solution.
This can be seen easily at Lebesgue's domain, but we prove it for a singular point of an arbitrary domain in Section~\ref{sec:5}.
\end{enumerate}

As explained above, there seems to be no reasonable condition on the boundary function $\phi$ to ensure that $u_\phi$ is a classical solution of~\ref{eq:D} -- even though there are always many classical solutions.
The situation is different if we instead consider the question of when $u_\phi$ has finite energy $\int_\Omega\abs{\nabla u_\phi}^2$.
This is the case if and only if $\phi$ has an extension $\Phi\in C(\clos{\Omega})\cap H^1(\Omega)$, and then $u_\phi$ may be determined by the Dirichlet principle as the unique minimiser of the Dirichlet energy.
As an easy consequence, if $\phi$ is Lipschitz continuous, then $u_\phi$ has finite energy.
These results are proved in Section~\ref{sec:2} along with other properties of the variational solution.

\section{Variational solutions}\label{sec:2}

By $H^1(\Omega) := \{u\in L^2(\Omega) : \partial_j u\in L^2(\Omega)\text{ for }j=1,\dots,d\}$ we denote the first Sobolev space which is a Hilbert space for the norm
\[
    \norm{u}_{H^1(\Omega)} := \p(\norm{u}_{L^2(\Omega)}^2 + \sum_{j=1}^d\norm{\partial_j u}_{L^2(\Omega)}^2)^{\frac{1}{2}}.
\]
Let $H^1_0(\Omega)$ be the closure of $C^\infty_\cpt(\Omega)$ in $H^1(\Omega)$.
On $H^1_0(\Omega)$ we consider the scalar product
\[
    [v,w] := \int_\Omega\nabla v\nabla w\dx\qquad (v,w\in H^1_0(\Omega)),
\]
which defines an equivalent norm to that induced by $H^1(\Omega)$.

In this section we develop variational solutions of the Dirichlet problem on the basis of the following result~\cite[Corollary~4.3]{AtES24}.
\begin{thm}[Extension result]\label{thm:2.1}
Let $\phi\in C(\partial\Omega)$. Then there exist an extension $\Phi\in C(\clos{\Omega})$ of $\phi$ and $M \ge 0$ such that
\begin{equation}\label{eq:2.1}
    \abs*{\int_\Omega\Phi\Delta w\dx} \le M\norm{w}_{H^1(\Omega)}
\end{equation}
for all $w\in C^\infty_\cpt(\Omega)$. This means that $\Delta\Phi\in H^{-1}(\Omega)$, the dual space of $H^1_0(\Omega)$.
\end{thm}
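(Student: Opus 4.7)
The strategy is to construct $\Phi$ in two stages: first a rough continuous extension of $\phi$ to $\clos\Omega$, and then a scale-adapted smoothing near $\partial\Omega$ that forces the distributional Laplacian into $H^{-1}(\Omega)$ without disturbing the prescribed boundary trace. The point is that we do \emph{not} require $\Phi\in H^1(\Omega)$ (which would fail for general $\Omega$), only that one distributional derivative can be absorbed into the pairing with the test function $w$ via a Hardy-type estimate.

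First, I would extend $\phi$ continuously to a compactly supported function $\tilde\phi\in C_\cpt(\RR^d)$ by the Tietze extension theorem. This yields continuity and the correct boundary values, but in general $\Delta\tilde\phi$ is only a distribution on $\RR^d$ with no useful $H^{-1}(\Omega)$ bound. To correct this, I would take a Whitney-type decomposition $\{Q_j\}$ of $\Omega$ with $\operatorname{diam}(Q_j)\sim \delta_j := \dist(Q_j,\partial\Omega)$, together with a subordinate smooth partition of unity $\{\eta_j\}$, and define
\[
\Phi(x) := \sum_j \eta_j(x)\,p_j,
\]
where $p_j$ is the average of $\tilde\phi$ over a ball of radius proportional to $\delta_j$ near $Q_j$. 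Uniform continuity of $\tilde\phi$ then forces $\Phi\in C(\clos\Omega)$ with $\restrict{\Phi}{\partial\Omega}=\phi$.

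The heart of the argument is bounding $\Delta\Phi$ in $H^{-1}(\Omega)$. Since each $p_j$ is constant, the derivatives fall on the partition of unity and one has $|\Delta\Phi(x)|\lesssim \omega(\delta_j)\,\delta_j^{-2}$ on each $Q_j$, where $\omega$ is the modulus of continuity of $\tilde\phi$. For $w\in C^\infty_\cpt(\Omega)$ I would integrate by parts once on $\int_\Omega\Phi\Delta w$ to move a derivative back onto $w$, and then use a Hardy-type estimate on $w$ (which vanishes on $\partial\Omega$) to gain one extra factor of $\delta_j$. Together with the finite overlap of the Whitney cover and Cauchy--Schwarz, this reduces (\ref{eq:2.1}) to a summability bound of the form
\[
\sum_j \omega(\delta_j)^2\,|Q_j|\,\delta_j^{-2} \le M^2.
\]

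The main obstacle is precisely this last inequality: mere continuity of $\phi$ gives only $\omega(\delta_j)\to 0$, which is insufficient for naive summation over Whitney cubes abutting the boundary. Closing this gap is the genuinely deep step -- one I would expect to require a capacitary refinement, showing that the set of boundary points where $\phi$ has oscillation exceeding $\eps$ at scale $\delta_j$ has controllably small Sobolev capacity as $\delta_j\to 0$, and using this to replace the pointwise $\omega(\delta_j)$ by a weighted capacitary quantity that is summable against the $H^1$-energy of $w$. This is exactly the type of input provided by~\cite[Corollary~4.3]{AtES24}, which is why the present paper takes Theorem~\ref{thm:2.1} as given rather than attempting a direct construction.
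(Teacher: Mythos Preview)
The paper does not prove Theorem~\ref{thm:2.1}; it is quoted from \cite[Corollary~4.3]{AtES24} and the present paper explicitly ``take[s it] as given''. The only information the paper gives about the actual proof is the sketch at the end of Section~\ref{sec:3}: one starts from a bounded function $w$ on $\Omega$ that \emph{already} satisfies $\Delta w\in H^{-1}(\Omega)$ and $\lim_{x\to z}w(x)=\phi(z)$ for quasi-every $z\in\partial\Omega$ (concretely $w=\widetilde T\phi$, the Perron solution), and then iteratively corrects the boundary behaviour by small $H^1_0(\Omega)$ perturbations, passing to a limit to produce a genuine $C(\clos\Omega)$ extension. Your plan runs in the opposite direction: you begin with something that has the correct boundary values and try to repair the Laplacian. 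That is a legitimate idea in principle, but as written it does not close, and the gap is not merely a matter of technical bookkeeping.

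Two concrete obstructions. First, the Hardy-type step ``gain one extra factor of $\delta_j$ from $w\in C_\cpt^\infty(\Omega)$'' presupposes an inequality $\int_\Omega |w|^2\delta^{-2}\le C\norm{\nabla w}_{L^2}^2$, which is simply false for a general bounded open set $\Omega$; it requires boundary regularity (Lipschitz, or at least Aikawa/uniform fatness of the complement) that is unavailable here. Second, even granting such an inequality, your scheme yields the bound $\abs{\langle\Delta\Phi,w\rangle}\lesssim\bigl(\sum_j\omega(\delta_j)^2\abs{Q_j}\delta_j^{-2}\bigr)^{1/2}\norm{w}_{H^1}$, and the bracketed quantity is precisely $\int_\Omega\omega(\delta(x))^2\delta(x)^{-2}\dx$, which is the same as $\norm{\nabla\Phi}_{L^2(\Omega)}^2$ for the Whitney extension. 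In other words, the Hardy trick buys nothing over proving $\Phi\in H^1(\Omega)$ outright---and the paper emphasises (Hadamard's example, discussion after Theorem~\ref{thm:2.7}) that this is impossible for general $\phi\in C(\partial\Omega)$. Your final paragraph effectively concedes this and defers to \cite{AtES24}; but the ``capacitary refinement'' you gesture at bears no resemblance to the iterative $H^1_0$-correction argument actually used there, so the proposal is an honest explanation of why the problem is hard rather than a proof.
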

Note that for $u\in H^1(\Omega)$ one has
\[
    \abs*{\int_\Omega u\Delta w\dx} = \abs*{-\int_\Omega\nabla u\nabla w\dx}\le\norm{u}_{H^1(\Omega)}\norm{w}_{H^1(\Omega)}
\]
for all $w\in C^\infty_\cpt(\Omega)$ and therefore $\Delta u\in H^{-1}(\Omega)$.

Let $\phi\in C(\partial\Omega)$ and $\Phi\in C(\clos{\Omega})$ be an extension of $\phi$ satisfying~\eqref{eq:2.1}.
By the Riesz--Fréchet representation theorem there exists a unique $v\in H^1_0(\Omega)$ such that
\[
    \int_\Omega \Phi\Delta w\dx = -\int_\Omega\nabla v\nabla w\dx = \int_\Omega v\Delta w\dx
\]
for all $w\in C^\infty_\cpt(\Omega)$. This means that $u:=\Phi-v$ satisfies $\Delta u=0$ weakly. Hence $u\in C^\infty(\Omega)$ by Weyl's theorem and $\Delta u=0$ in the classical sense;
that is, $u$ is a harmonic function.
We call $u$ the \emphdef{variational solution} of~\ref{eq:D} and set $u_\phi := u$.
To do so, we have to show that the function does not depend on the extension $\Phi$ of $\phi$. This follows from the next lemma.

\begin{lem}\label{lem:2.2}
Let $v\in C_0(\Omega)$ be such that $\Delta v\in H^{-1}(\Omega)$. Then $v\in H^1_0(\Omega)$.
\end{lem}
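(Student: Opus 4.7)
The plan is to exhibit explicit $H^1_0(\Omega)$-approximants of $v$ obtained by truncating $v$ away from zero, and to extract the required regularity from $\Delta v \in H^{-1}(\Omega)$ by testing against these truncations.
For $\eps > 0$ set $T_\eps(t) := \operatorname{sgn}(t)(\abs{t}-\eps)_+$ and $v_\eps := T_\eps(v)$; since $v \in C_0(\Omega)$, the set $\{\abs{v} \ge \eps\}$ is compactly contained in $\Omega$, so $v_\eps$ has compact support in $\Omega$.
The scheme is to verify $v_\eps \in H^1_0(\Omega)$ for every $\eps$, to bound $\int_\Omega \abs{\nabla v_\eps}^2\dx$ uniformly in $\eps$, and then to pass to the limit $\eps \downarrow 0$ in $H^1(\Omega)$; closedness of $H^1_0(\Omega)$ in $H^1(\Omega)$ will then force $v \in H^1_0(\Omega)$.

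To give $v_\eps$ a weak gradient I first invoke interior elliptic regularity for the Laplacian: since $\Omega$ is bounded and $v$ is continuous on $\clos{\Omega}$, we have $v \in L^2(\Omega)$, and $\Delta v \in H^{-1}(\Omega) \subseteq H^{-1}_\loc(\Omega)$, so $v \in H^1_\loc(\Omega)$.
The Sobolev chain rule applied to the Lipschitz map $T_\eps$ with $T_\eps(0)=0$ then yields $v_\eps \in H^1(\Omega)$ with $\nabla v_\eps = \one_{\{\abs{v}>\eps\}}\nabla v$ almost everywhere; combined with the compact support in $\Omega$, this gives $v_\eps \in H^1_0(\Omega)$.

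For the uniform energy bound, approximate $v_\eps$ in the $H^1$-norm by $\psi_n \in C^\infty_\cpt(\Omega)$ supported in a fixed compact neighbourhood of $\operatorname{supp}(v_\eps)$.
From the distributional definition of $\Delta v$, $\langle \Delta v, \psi_n \rangle = \int_\Omega v \Delta \psi_n\dx = -\int_\Omega \nabla v \cdot \nabla \psi_n\dx$, where the second equality uses integration by parts on the compact support of $\psi_n$ together with $v \in H^1_\loc(\Omega)$.
Passing to $n \to \infty$ identifies $\langle \Delta v, v_\eps \rangle = -\int_\Omega \nabla v \cdot \nabla v_\eps\dx = -\int_{\{\abs{v}>\eps\}}\abs{\nabla v}^2\dx$, whence
\[
    \int_{\{\abs{v}>\eps\}}\abs{\nabla v}^2\dx = -\langle \Delta v, v_\eps\rangle \le \norm{\Delta v}_{H^{-1}} \sqrt{\int_{\{\abs{v}>\eps\}}\abs{\nabla v}^2\dx},
\]
so $\int_{\{\abs{v}>\eps\}}\abs{\nabla v}^2\dx \le \norm{\Delta v}_{H^{-1}}^2$.
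Monotone convergence as $\eps \downarrow 0$ yields $\nabla v \in L^2(\Omega)$, hence $v \in H^1(\Omega)$.

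Finally, $\abs{v-v_\eps} \le \eps$ gives $v_\eps \to v$ in $L^2(\Omega)$, while $\abs{\nabla v - \nabla v_\eps} = \abs{\nabla v}\one_{\{\abs{v}\le\eps\}}$ tends pointwise almost everywhere to zero as $\eps \downarrow 0$ (using that $\nabla v = 0$ a.e.\ on $\{v=0\}$, a standard consequence of the Sobolev chain rule), so dominated convergence gives $v_\eps \to v$ in $H^1(\Omega)$, and therefore $v \in H^1_0(\Omega)$ since each $v_\eps$ lies in the closed subspace $H^1_0(\Omega)$.
The main technical obstacle is the identification $\langle \Delta v, v_\eps\rangle = -\int_\Omega \nabla v \cdot \nabla v_\eps\dx$: this identity is not automatic because $v$ is only locally $H^1$ on $\Omega$, and the argument hinges on the compact support of $v_\eps$---ultimately from the hypothesis $v \in C_0(\Omega)$---to make the integration by parts against the smooth approximants $\psi_n$ legitimate.
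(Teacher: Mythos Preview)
Your proof is correct and follows essentially the same approach as the paper's: establish $v\in H^1_\loc(\Omega)$, use the compactly supported truncations to obtain a uniform energy bound via the pairing with $\Delta v\in H^{-1}(\Omega)$, and pass to the limit in $H^1_0(\Omega)$. The only cosmetic differences are that the paper treats $(v-\delta)^+$ and $(v+\delta)^-$ separately and uses weak compactness rather than your two-sided truncation $T_\eps(v)$ with strong $H^1$-convergence.
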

In fact, if $\Phi_1,\Phi_2\in C(\clos{\Omega})$ are such that $\restrict{\Phi_1}{\partial\Omega}=\restrict{\Phi_2}{\partial\Omega}$ and $\Delta\Phi_1\in H^{-1}(\Omega)$, $\Delta\Phi_2\in H^{-1}(\Omega)$, then $\Phi_1-\Phi_2\in C_0(\Omega)$ and $\Delta(\Phi_1-\Phi_2)\in H^{-1}(\Omega)$.
Thus $\Phi_1-\Phi_2\in H^1_0(\Omega)$ by Lemma~\ref{lem:2.2}.
Now let $v_1,v_2\in H^1_0(\Omega)$ be such that $\Delta v_1 = \Delta\Phi_1$, $\Delta v_2=\Delta\Phi_2$.
Then $v := \Phi_1-v_1 - (\Phi_2-v_2)\in H^1_0(\Omega)$ and $\Delta v=0$. Thus $[v,v]=\langle-\Delta v,v\rangle = 0$ and so $v=0$.

\begin{proof}[Proof of Lemma~\ref{lem:2.2}]
Let $v\in C_0(\Omega)$ be such that $F := \Delta v\in H^{-1}(\Omega)$.
By the Riesz--Fréchet representation theorem there exists $v_0\in H^1_0(\Omega)$ such that $[v_0,w]=-\int_\Omega v\Delta w\dx$ for all $w\in C^\infty_\cpt(\Omega)$.
Thus
\begin{displaymath}
  \int_\Omega v_0\Delta w\dx = -\int_\Omega\nabla v_0\nabla w\dx = \int_\Omega v\Delta w\dx,
\end{displaymath}
i.e.\ $\Delta(v_0-v)=0$ in the sense of distributions.
Thus $v_0-v\in C^\infty(\Omega)$ by Weyl's theorem.
We conclude that $v\in H^1_\loc(\Omega)$.

Let $\delta>0$. Then $(v-\delta)^+\in H^1_\loc(\Omega)\cap C_\cpt(\Omega)\subset H^1_\cpt(\Omega)$ and $\partial_j(v-\delta)^+=\one_{[v>\delta]}\partial_j v$ (see e.g.~\cite[Section~6.4]{AU23}).
Thus
\[
   \langle-F, (v-\delta)^+\rangle = \int_\Omega\nabla v\nabla (v-\delta)^+\dx = \int_\Omega\abs{\nabla (v-\delta)^+}^2\dx.
\]
It follows that
\[
    \p(\int_\Omega\abs{\nabla (v-\delta)^+}^2\dx)^{\frac{1}{2}}\le\norm{F}_{H^{-1}(\Omega)}.
\]
Since $H^1_0(\Omega)$ is reflexive, we find $v_1\in H^1_0(\Omega)$ and a sequence $\delta_n\downarrow 0$ such that $(v-\delta_n)^+\to v_1$ weakly in $H^1_0(\Omega)$.
Since $(v-\delta_n)^+\to v^+$ in $L^2(\Omega)$, it follows that $v^+=v_1\in H^1_0(\Omega)$. Similarly $v^-\in H^1_0(\Omega)$ and so $v=v^+-v^-\in H^1_0(\Omega)$.
\end{proof}

Next we prove consistency with classical solutions.
\begin{prop}\label{prop:2.3}
Let $\phi\in C(\partial\Omega)$ be such that~\ref{eq:D} has a classical solution $u$. 
Then $u$ is equal to the variational solution $u_\phi$.
\end{prop}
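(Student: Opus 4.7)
The plan is to observe that the classical solution $u$ itself qualifies as a valid extension $\Phi$ in the construction of the variational solution. Indeed, $u\in C(\clos{\Omega})$ by definition, $\restrict{u}{\partial\Omega}=\phi$, and since $u$ is harmonic on $\Omega$ one trivially has $\Delta u=0\in H^{-1}(\Omega)$. So $u$ satisfies all hypotheses placed on $\Phi$ in the construction of $u_\phi$, and by the well-definedness established after Lemma~\ref{lem:2.2} (which uses Lemma~\ref{lem:2.2} to eliminate the dependence on the choice of extension) we may freely take $\Phi := u$.

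With this choice, the Riesz--Fréchet step produces the unique $v\in H^1_0(\Omega)$ with
\[
   \int_\Omega u\,\Delta w\dx = \int_\Omega v\,\Delta w\dx\qquad (w\in C^\infty_\cpt(\Omega)).
\]
Since $u$ is harmonic, the left-hand side vanishes (integrate by parts twice, using that $w$ has compact support in $\Omega$), so $\Delta v=0$ in the sense of distributions. I would then extend the identity $\int_\Omega\nabla v\,\nabla w\dx=0$ from $w\in C^\infty_\cpt(\Omega)$ to all $w\in H^1_0(\Omega)$ by density, test with $w=v$, and conclude $v=0$. Hence $u_\phi=\Phi-v=u$.

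There is no substantive obstacle here: the result is essentially a consistency check, and the work has already been done in setting up the well-definedness of $u_\phi$ via Lemma~\ref{lem:2.2}. The only point requiring a moment of care is the observation that a classical solution is automatically admissible as the extension $\Phi$, after which the associated corrector $v$ is forced to vanish by its harmonicity together with membership in $H^1_0(\Omega)$.
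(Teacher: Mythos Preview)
Your proof is correct. The approach differs slightly from the paper's, though both rest on Lemma~\ref{lem:2.2}. The paper takes an \emph{arbitrary} admissible extension $\Phi$ and applies Lemma~\ref{lem:2.2} directly to $\Phi-u\in C_0(\Omega)$ (noting $\Delta(\Phi-u)=\Delta\Phi\in H^{-1}(\Omega)$) to conclude $\Phi-u\in H^1_0(\Omega)$, so that $u=\Phi-(\Phi-u)$ matches the definition of $u_\phi$. You instead exploit that the classical solution $u$ is itself an admissible extension and invoke the already-established well-definedness to take $\Phi=u$, whereupon the corrector $v$ is forced to vanish. Your route is marginally more direct once well-definedness is in hand; the paper's route is self-contained in that it re-invokes Lemma~\ref{lem:2.2} rather than its consequence. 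Substantively the two arguments are equivalent repackagings of the same idea.
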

\begin{proof}
Let $\Phi\in C(\clos{\Omega})$ be such that $\restrict{\Phi}{\partial\Omega}=\phi$, $\Delta\Phi\in H^{-1}(\Omega)$.
Then $\Phi-u\in C_0(\Omega)$ and $\Delta(\Phi-u)=\Delta\Phi\in H^{-1}(\Omega)$.
By Lemma~\ref{lem:2.2}, $\Phi-u\in H^1_0(\Omega)$.
Thus $u_\phi=\Phi-(\Phi-u)=u$ by the definition of $u_\phi$.
\end{proof}

We next prove the maximum principle for the variational solution.
\begin{thm}[Maximum principle]\label{thm:2.4}
Let $\phi\in C(\partial\Omega)$ and let $u_\phi$ be the variational solution of~\ref{eq:D}. Then
\begin{equation}\label{eq:2.2}
    \min_{z\in \partial\Omega}\phi(z)\le u_\phi(x)\le\max_{z\in \partial\Omega}\phi(z)
\end{equation}
for all $x\in\Omega$.
\end{thm}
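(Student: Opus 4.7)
The plan is to establish the upper bound $u_\phi \le M$; the lower bound then follows by applying it to $-\phi$. By replacing $\phi$ with $\phi - M$ and the extension $\Phi$ with $\Phi - M$, I reduce to showing that if $\phi \le 0$ on $\partial\Omega$, then $u_\phi \le 0$ on $\Omega$.

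The central idea is to exploit that the positive part $u_\phi^+$ is subharmonic in the distributional sense, as the positive part of a harmonic function. Assuming $u_\phi^+ \in H^1_0(\Omega)$, testing the inequality $\Delta u_\phi^+ \ge 0$ against $u_\phi^+$ itself (by approximation in $H^1_0$ by non-negative $C^\infty_\cpt$ functions and using $u_\phi^+ \in H^1_{\loc}(\Omega)$ for integration by parts) would yield $\int_\Omega \abs{\nabla u_\phi^+}^2 \dx \le 0$. Hence $u_\phi^+$ is constant and, lying in $H^1_0(\Omega)$, must vanish identically.

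The crux is thus to establish $u_\phi^+ \in H^1_0(\Omega)$. Writing $u_\phi = \Phi - v$ with $v \in H^1_0(\Omega)$, one immediately has the pointwise bound $u_\phi^+ = (\Phi - v)^+ \le \Phi^+ + v^-$, where $\Phi^+ \in C_0(\Omega)$ (since $\Phi^+|_{\partial\Omega} = \phi^+ = 0$) and $v^- \in H^1_0(\Omega)$. This encodes the correct boundary behaviour, but is not enough on its own since $\Phi^+$ need not belong to $H^1(\Omega)$. To close the gap I would approximate $\phi$ uniformly on $\partial\Omega$ by Lipschitz functions $\phi_n \le 0$, and extend each to a Lipschitz function $\Psi_n \le 0$ on $\clos{\Omega}$ via McShane--Whitney followed by truncation at $0$. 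Then $\Psi_n \in W^{1,\infty}(\Omega) \subset H^1(\Omega)$, and the variational solution $u_n := u_{\phi_n} \in \Psi_n + H^1_0(\Omega) \subset H^1(\Omega)$. Its positive part $u_n^+ \in H^1(\Omega)$ is dominated by $v_n^- \in H^1_0(\Omega)$ (using $\Psi_n \le 0$), so the lattice ideal property of $H^1_0$ inside $H^1$ gives $u_n^+ \in H^1_0(\Omega)$, and the subsolution test above yields $u_n \le 0$. The uniform bound $\norm{u_n}_\infty \le \norm{\phi_n}_\infty$ together with the standard compactness of equibounded harmonic functions then produces a locally uniform subsequential limit $u_n \to \tilde u$ with $\tilde u \le 0$ harmonic, and identifying $\tilde u = u_\phi$ via the uniqueness part of Theorem~\ref{thm:1.1} completes the proof.

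The hardest step is this final identification $\tilde u = u_\phi$. It requires exhibiting the relation $\Phi - \tilde u \in H^1_0(\Omega)$, i.e.\ some form of $H^1_0$-convergence of $v_n = \Psi_n - u_n$ to $\Phi - \tilde u$. This in turn depends on arranging the Lipschitz approximations $\Psi_n$ so that $\norm{\Delta(\Psi_n - \Phi)}_{H^{-1}(\Omega)} \to 0$, which seems to require a quantitative refinement of the extension result underlying Theorem~\ref{thm:2.1}.
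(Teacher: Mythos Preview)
Your reduction to $\phi\le 0$ and the treatment of the Lipschitz case are fine; in fact the paper remarks (end of Section~\ref{sec:3}) that the maximum principle for $\Phi\in C(\clos\Omega)\cap H^1(\Omega)$ is easy, and your lattice-ideal argument for $u_n^+\in H^1_0(\Omega)$ is one way to do it. The genuine gap is exactly where you locate it: the identification $\tilde u=u_\phi$. Knowing $u_n\to\tilde u$ locally uniformly and $\Psi_n-u_n\in H^1_0(\Omega)$ does not yield $\Phi-\tilde u\in H^1_0(\Omega)$ without some control such as $\Delta\Psi_n\to\Delta\Phi$ in $H^{-1}(\Omega)$, and nothing in Theorem~\ref{thm:2.1} provides that. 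Trying instead to use Lemma~\ref{lem:2.2} would require $\Phi-\tilde u\in C_0(\Omega)$, i.e.\ continuity of $\tilde u$ at the boundary, which is precisely what may fail. So as written the argument is incomplete, and the missing ingredient is not a technicality but essentially the hard content of the theorem.

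The paper sidesteps this entirely by never approximating $\phi$ or $\Phi$. It works with the given extension $\Phi$ (so $\Phi^+\in C_0(\Omega)$) and approximates only $v\in H^1_0(\Omega)$ by $v_n\in C^\infty_\cpt(\Omega)$. For each $\eps>0$ the function $(\Phi-\eps-v_n)^+$ then has compact support in $\Omega$, hence lies in $H^1_\cpt(\Omega)\subset H^1_0(\Omega)$. Testing $\Delta u_\phi=0$ against this function and rearranging gives the energy estimate
\[
\norm{\nabla(\Phi-\eps-v_n)^+}_{L^2}\le\norm{\nabla(v-v_n)}_{L^2}\to 0,
\]
so a weak-limit argument yields $(u_\phi-\eps)^+=(\Phi-\eps-v)^+\in H^1_0(\Omega)$. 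Proposition~\ref{prop:2.5} (via Kato's inequality) then gives $u_\phi\le\eps$ for every $\eps>0$. The point is that the approximation happens inside $H^1_0(\Omega)$, where convergence is available for free, rather than at the level of the boundary data.
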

While it is clear that $u_\phi\in C^\infty(\Omega)$ since $u_\phi$ is harmonic,
so far we did not know that $u_\phi$ is bounded. This now follows from~\eqref{eq:2.2}.
In fact,
\begin{equation}
    \norm{u_\phi}_{C_\bdd(\Omega)}\le \norm{\phi}_{C(\partial\Omega)}
\end{equation}
for all $\phi\in C(\partial\Omega)$.
For the proof of Theorem~\ref{thm:2.4} we need some preparation.

We recall Kato's inequality. Let $u\in L^1_\loc(\Omega)$ be such that $\Delta u\in L^1_\loc(\Omega)$. Then
\begin{equation}\label{eq:2.4}
    \one_{[u>0]}\Delta u\le\Delta u^+
\end{equation}
in the weak sense; i.e.
\[
    \int_\Omega \one_{[u>0]}(\Delta u)w\dx \le \int_\Omega u^+\Delta w\dx
\]
for all $0\le w\in C^\infty_\cpt(\Omega)$.
We refer to~\cite[Proposition~5]{AB92} for the proof.
Kato's inequality implies the following form of the maximum principle.

\begin{prop}\label{prop:2.5}
Let $u\in L^1_\loc(\Omega)$ be such that $\Delta u\in L^1_\loc(\Omega)$ and $-\Delta u\le 0$.
Let $c\in\RR$ be such that $(u-c)^+\in H^1_0(\Omega)$. Then $u(x)\le c$ for almost all $x\in\Omega$.
\end{prop}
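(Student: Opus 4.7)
The plan is to use Kato's inequality to show that $(u-c)^+$ is subharmonic in the distributional sense, and then to exploit the hypothesis $(u-c)^+\in H^1_0(\Omega)$ to force this function to vanish.

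First, since $u-c\in L^1_\loc(\Omega)$ and $\Delta(u-c)=\Delta u\in L^1_\loc(\Omega)$, I apply Kato's inequality~\eqref{eq:2.4} to $u-c$: for every $0\le w\in C^\infty_\cpt(\Omega)$,
\[
    \int_\Omega\one_{[u>c]}(\Delta u)\,w\dx \le \int_\Omega (u-c)^+\Delta w\dx.
\]
By hypothesis $\Delta u\ge 0$ almost everywhere, so the left-hand side is nonnegative and therefore $\Delta(u-c)^+\ge 0$ in the sense of distributions. Setting $v:=(u-c)^+\in H^1_0(\Omega)$, the remark following Theorem~\ref{thm:2.1} gives $\Delta v\in H^{-1}(\Omega)$ with pairing $\langle\Delta v,w\rangle=-\int_\Omega\nabla v\cdot\nabla w\dx$ for all $w\in H^1_0(\Omega)$. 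Combining this with the distributional inequality yields $\int_\Omega\nabla v\cdot\nabla w\dx\le 0$ for every $0\le w\in C^\infty_\cpt(\Omega)$.

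Next, I extend this inequality to all nonnegative $w\in H^1_0(\Omega)$ by density of $\{w\in C^\infty_\cpt(\Omega) : w\ge 0\}$ in the closed convex cone $\{w\in H^1_0(\Omega) : w\ge 0\}$. In particular, the admissible choice $w:=v\ge 0$ gives $\int_\Omega\abs{\nabla v}^2\dx\le 0$, so $\nabla v=0$ almost everywhere. Hence $v$ is locally constant, and since $v\in H^1_0(\Omega)$ one concludes $v=0$ almost everywhere, i.e.\ $u\le c$ almost everywhere.

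The main technical point is the density statement used to test against $w=v$. This is routine — given a nonnegative $w\in H^1_0(\Omega)$, take an approximating sequence in $C^\infty_\cpt(\Omega)$, replace each term by its positive part (continuity of $\tilde w\mapsto\tilde w^+$ on $H^1$ preserves convergence), and finally mollify the resulting continuous compactly supported functions — but it is the only step that requires work beyond assembling the ingredients already on the page.
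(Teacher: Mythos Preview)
Your proof is correct and follows essentially the same route as the paper: apply Kato's inequality to obtain $\Delta(u-c)^+\ge 0$ in the distributional sense, use $(u-c)^+\in H^1_0(\Omega)$ to test against itself, and conclude $(u-c)^+=0$. The only cosmetic differences are that the paper first reduces to $c=0$ and invokes the Poincar\'e inequality (via the equivalent inner product $[\cdot,\cdot]$ on $H^1_0(\Omega)$) to pass directly from $\int_\Omega\abs{\nabla v}^2\dx=0$ to $v=0$, whereas you go through ``locally constant'' and spell out the density of nonnegative $C^\infty_\cpt(\Omega)$ functions in the nonnegative cone of $H^1_0(\Omega)$ more explicitly.
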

Note that $(u-c)^+\in H^1_0(\Omega)$ is a weak formulation of $u\le c$ at the boundary.
\begin{proof}[Proof of Proposition~\ref{prop:2.5} {(cf.~\cite[Theorem~3.3]{AtE19:kato})}]
We may assume that $c=0$.
Since $-\Delta u\le 0$, one has by Kato's inequality~\eqref{eq:2.4}
\[
    0\le \one_{[u>0]}\Delta u\le \Delta u^+.
\]
This implies that $\int_\Omega u^+\Delta w\dx\ge 0$ for all $0\le w\in C^\infty_\cpt(\Omega)$.
Since $u^+\in H^1_0(\Omega)$, this implies that $\int_\Omega\nabla u^+\nabla w\dx\le 0$ for all $0\le w\in H^1_0(\Omega)$.
In particular, $\int_\Omega\abs{\nabla u^+}^2\dx\le 0$. Hence $u^+=0$.
\end{proof}

\begin{rem}
In Proposition~\ref{prop:2.5} the hypothesis that $\Delta u\in L^1_\loc(\Omega)$ is not needed.
In fact, let $u\in L^1_\loc(\Omega)$ be such that $-\Delta u\le 0$. Then $\Delta u\in\mathcal{M}(\Omega)$, the space of all Radon measures on $\Omega$; i.e.\ the dual space of $C_\cpt(\Omega)$, see~\cite[Chapter~I, Theorem~V]{Sch1966}.
Now Kato's inequality in the version of Brezis and Ponce~\cite[Theorem~1.1]{BP04} implies that also $\Delta u^+$ is a positive Radon measure (since $(\Delta u^+)_{\mathdcl{d}}\ge 0$ and $(\Delta u^+)_{\mathdcl{c}}\ge 0$, where we use the notation of~\cite{BP04}). If $u^+\in H^1_0(\Omega)$, then the argument in the previous proof shows that $u^+=0$.
\end{rem}

\begin{proof}[Proof of Theorem~\ref{thm:2.4}]
It is clear that $\phi\mapsto u_\phi$ is linear. Moreover, $u_{\one_{\partial\Omega}}=\one_\Omega$ by Proposition~\ref{prop:2.3}.

Let $\phi\in C(\partial\Omega)$, $\phi\le c$. We show that $u_\phi\le c$. This implies the claim.
We may assume that $c=0$ (replacing $\phi$ by $\phi-c$ if necessary since then $u_{\phi-c}=u_\phi -c\one_\Omega$).
Let $\Phi\in C(\clos{\Omega})$ be such that $\Delta\Phi\in H^{-1}(\Omega)$ and $\restrict{\Phi}{\partial\Omega}=\phi\le 0$.
Then $\Phi^+\in C_0(\Omega)$.
Let $v\in H^1_0(\Omega)$ be such that $\Delta v=\Delta\Phi$.
Then $u_\phi=\Phi-v$. Since $u_\phi\in C^\infty(\Omega)$, it follows that $\Phi\in H^1_\loc(\Omega)$.
Let $\eps>0$. We show that $(\Phi-\eps-v)^+\in H^1_0(\Omega)$.
Let $v_n\in C^\infty_\cpt(\Omega)$ be such that $v_n\to v$ in $H^1(\Omega)$.
Then $(\Phi-\eps-v_n)^+\in H^1_\cpt(\Omega)\subset H^1_0(\Omega)$.
Moreover,
\begin{align*}
    0 &= \int_\Omega \Delta u_\phi (\Phi -\eps - v_n)^+\dx \\
        &= - \int_\Omega\nabla u_\phi\nabla (\Phi-\eps-v_n)^+\dx \\
        &= - \int_\Omega\nabla(\Phi -v)\nabla (\Phi-\eps-v_n)^+\dx \\
        &= - \int_\Omega\nabla(\Phi - \eps-v_n)\nabla (\Phi-\eps-v_n)^+\dx + \int_\Omega\nabla(v-v_n)\nabla (\Phi-\eps-v_n)^+\dx\\
        &= -\int_\Omega\abs{\nabla(\Phi-\eps-v_n)^+}^2\dx + \int_\Omega\nabla(v-v_n)\nabla(\Phi-\eps-v_n)^+\dx.
\end{align*}
Hence
\begin{align*}
    \int_\Omega\abs{\nabla(\Phi-\eps-v_n)^+}^2\dx &= \int_\Omega\nabla(v-v_n)\nabla(\Phi-\eps-v_n)^+\dx \\
    &\le\norm{\nabla(v-v_n)}_{L^2}\norm{\nabla(\Phi-\eps-v_n)^+}_{L^2}.
\end{align*}
Therefore
\[
    \norm{\nabla(\Phi-\eps-v_n)^+}_{L^2}\le\norm{\nabla(v-v_n)}_{L^2}\to 0
\]
as $n\to\infty$.
Thus $((\Phi-\eps-v_n)^+)_{n\in\NN}$ is bounded in $H^1_0(\Omega)$.
There exist $\tilde{v}\in H^1_0(\Omega)$ and a subsequence such that $(\Phi-\eps-v_{n_k})^+\to\tilde{v}$ weakly in $H^1_0(\Omega)$ as $k\to\infty$.
Since $(\Phi-\eps-v_{n_k})^+\to(\Phi-\eps-v)^+$ in $L^2(\Omega)$ as $k\to\infty$, it follows that $(\Phi-\eps-v)^+=\tilde{v}\in H^1_0(\Omega)$, which proves the claim.

Since $\Delta(\Phi-\eps-v)=\Delta(\Phi-v)=\Delta u_\phi=0$, and $(\Phi-\eps-v)^+\in H^1_0(\Omega)$ for all $\eps>0$, it follows from Proposition~\ref{prop:2.5} that $u_\phi\le 0$.
We have shown the inequality on the right of~\eqref{eq:2.2}. The inequality on the left follows from the one on the right by replacing $\phi$ by $-\phi$.
\end{proof}

Next we talk about the Dirichlet principle.
We start by mentioning that the variational solution $u_\phi$ is obtained by a minimising process.
In fact, let $\phi\in C(\partial\Omega)$ and $\Phi\in C(\clos{\Omega})$ be an extension of $\phi$ satisfying the estimate~\eqref{eq:2.1}, i.e.\ $\Delta\Phi\in H^{-1}(\Omega)$.
The solution $v\in H^1_0(\Omega)$ of $\Delta v=\Delta\Phi$ is the unique function $v\in H^1_0(\Omega)$ such that
\begin{equation}\label{eq:2.5}
    \frac{1}{2}\int_\Omega\abs{\nabla v}^2\dx + \langle\Delta\Phi,v\rangle = \min\Bigl\{\frac{1}{2}\int_\Omega\abs{\nabla w}^2\dx + \langle\Delta\Phi,w\rangle \setcolon w\in H^1_0(\Omega)\Bigr\}
\end{equation}
(see e.g.~\cite[Theorem~4.24]{AU23}).
Now we can characterise when $\int_\Omega\abs{\nabla u_\phi}^2\dx<\infty$; i.e.\ when $u_\phi$ has finite energy.
Since $u_\phi\in C^\infty(\Omega)\cap C_\bdd(\Omega)$, this is equivalent to saying that $u_\phi\in H^1(\Omega)$.

\begin{thm}\label{thm:2.7}
Let $\phi\in C(\partial\Omega)$. The following statements are equivalent.
\begin{romanenum}
\item\label{en:t2.7-i}
The variational solution $u_\phi$ of~\ref{eq:D} is an element of $H^1(\Omega)$.
\item\label{en:t2.7-ii}
There exists $\Phi\in C(\clos{\Omega})\cap H^1(\Omega)$ with $\restrict{\Phi}{\partial\Omega}=\phi$.
\end{romanenum}
In that case
\[
    \int_\Omega\abs{\nabla u_\phi}^2\dx = \min\Bigl\{\int_\Omega\abs{\nabla w}^2\dx \setcolon w\in H^1(\Omega),\ \Phi-w\in H^1_0(\Omega)\Bigr\}.
\]
\end{thm}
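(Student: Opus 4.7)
The plan is to bootstrap on the construction of $u_\phi$ already done in Section~\ref{sec:2}. For the direction \ref{en:t2.7-ii}$\Rightarrow$\ref{en:t2.7-i}, the key observation is that whenever $\Phi\in C(\clos{\Omega})\cap H^1(\Omega)$ extends $\phi$, the remark following Theorem~\ref{thm:2.1} guarantees $\Delta\Phi\in H^{-1}(\Omega)$, so this very $\Phi$ is admissible in the construction of $u_\phi$. Writing $u_\phi=\Phi-v$ with $v\in H^1_0(\Omega)$ gives $u_\phi\in H^1(\Omega)$ immediately, since both summands lie in $H^1(\Omega)$.

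For \ref{en:t2.7-i}$\Rightarrow$\ref{en:t2.7-ii}, I would start from any extension $\Phi_0\in C(\clos{\Omega})$ with $\Delta\Phi_0\in H^{-1}(\Omega)$ as supplied by Theorem~\ref{thm:2.1}, and let $v\in H^1_0(\Omega)$ be the associated function so that $u_\phi=\Phi_0-v$. The hypothesis $u_\phi\in H^1(\Omega)$ together with $v\in H^1_0(\Omega)\subset H^1(\Omega)$ forces $\Phi_0=u_\phi+v\in H^1(\Omega)$, so $\Phi:=\Phi_0\in C(\clos{\Omega})\cap H^1(\Omega)$ is the required extension.

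For the minimisation formula, fix any admissible $\Phi$ as in \ref{en:t2.7-ii}. The affine set $\{w\in H^1(\Omega):\Phi-w\in H^1_0(\Omega)\}$ equals $\Phi-H^1_0(\Omega)$. Parametrising $w=\Phi-\tilde v$ with $\tilde v\in H^1_0(\Omega)$, the functional to be minimised becomes
\[
    \int_\Omega\abs{\nabla\Phi-\nabla\tilde v}^2\dx,
\]
which is the distance (squared) from $\nabla\Phi$ to the closed subspace $\{\nabla\tilde v:\tilde v\in H^1_0(\Omega)\}$ of $L^2(\Omega;\RR^d)$. The unique minimiser is characterised by orthogonality, i.e.\ by $\int_\Omega\nabla(\Phi-\tilde v)\cdot\nabla\psi\dx=0$ for all $\psi\in H^1_0(\Omega)$, which is precisely $\Delta(\Phi-\tilde v)=0$ in $H^{-1}(\Omega)$. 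By the definition of $u_\phi$ (using $\Phi$ as the extension, as in the first paragraph), this is solved by $\tilde v=v$ with $w=\Phi-v=u_\phi$. Evaluating the functional at this minimiser yields $\int_\Omega\abs{\nabla u_\phi}^2\dx$, as required.

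There is no real obstacle here: the proof is essentially bookkeeping once one notices that the extension $\Phi$ in \ref{en:t2.7-ii} is itself a valid choice in the construction of the variational solution. The only mild subtlety to spell out is that existence of the orthogonal projection in the last step (equivalently, of the unique minimiser of the Dirichlet integral over the affine subspace $\Phi+H^1_0(\Omega)$) is a direct instance of the Dirichlet principle recalled in~\eqref{eq:2.5}, applied with right-hand side $\langle\Delta\Phi,\cdot\rangle$ and with the energy rewritten via $\int\abs{\nabla(\Phi-\tilde v)}^2=\int\abs{\nabla\Phi}^2-2\int\nabla\Phi\cdot\nabla\tilde v+\int\abs{\nabla\tilde v}^2$, so that minimising in $\tilde v$ is equivalent to the variational problem already solved.
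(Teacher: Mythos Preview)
Your proof is correct and follows essentially the same approach as the paper: both directions of the equivalence are handled identically, and for the minimisation formula you use the orthogonal-projection formulation of the Dirichlet principle whereas the paper expands the quadratic via~\eqref{eq:2.5}, but these are two presentations of the same computation.
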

\begin{proof}
\begin{parenum}
\item[\ref{en:t2.7-ii}$\Rightarrow$\ref{en:t2.7-i}.]
Since $\Phi\in H^1(\Omega)$, $u_\phi=\Phi-v\in H^1(\Omega)$, where $v\in H^1_0(\Omega)$ is such that $\Delta v=\Delta\Phi$.
Moreover, by~\eqref{eq:2.5} and considering $W:=\{w\in H^1(\Omega)\setcolon \Phi-w\in H^1_0(\Omega)\}$,
\begin{align*}
    \frac{1}{2}[u_\phi,u_\phi] -\frac{1}{2}[\Phi,\Phi] &= \frac{1}{2}[v,v]-[\Phi,v] \\
        &= \min\Bigl\{\frac{1}{2}[\Phi-w,\Phi-w] - [\Phi,\Phi-w] \setcolon w\in W\Bigr\} \\
        &= \min\Bigl\{\frac{1}{2}[w,w] - \frac{1}{2}[\Phi,\Phi] \setcolon w\in W\Bigr\} \\
        &= \frac{1}{2} \min\Bigl\{[w,w] \setcolon w\in W\Bigr\}-\frac{1}{2}[\Phi,\Phi].
\end{align*}
Thus $[u_\phi,u_\phi] = \min\{[w,w]\setcolon w\in W\}$ and this property characterises $u_\phi$.
\item[\ref{en:t2.7-i}$\Rightarrow$\ref{en:t2.7-ii}.]
Let $\Phi\in C(\clos{\Omega})$ be such that $\restrict{\Phi}{\partial\Omega}=\phi$ and $\Delta\Phi\in H^{-1}(\Omega)$.
Let $v\in H^1_0(\Omega)$ be such that $\Delta v=\Delta\Phi$.
Then $u_\phi=\Phi-v$.
Since $u_\phi\in H^1(\Omega)$ by hypothesis, it follows that $\Phi\in H^1(\Omega)$.\qedhere
\end{parenum}
\end{proof}

\begin{cor}
Let $\phi\in C(\partial\Omega)$ be Lipschitz continuous. Then $u_\phi\in H^1(\Omega)$.
\end{cor}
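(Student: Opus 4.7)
The plan is to reduce the claim to Theorem~\ref{thm:2.7} by producing a concrete extension of $\phi$ that belongs to $C(\clos{\Omega})\cap H^1(\Omega)$. Once such an extension is constructed, implication \ref{en:t2.7-ii}$\Rightarrow$\ref{en:t2.7-i} of Theorem~\ref{thm:2.7} immediately gives $u_\phi\in H^1(\Omega)$.

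For the construction, let $L\ge 0$ denote a Lipschitz constant of $\phi$ on the compact set $\partial\Omega$, and use the McShane extension
\[
    \Phi(x) := \inf_{z\in\partial\Omega}\bigl(\phi(z) + L\abs{x-z}\bigr)\qquad (x\in\RR^d).
\]
Since $\partial\Omega$ is compact, $\Phi$ is well-defined and real-valued, it equals $\phi$ on $\partial\Omega$, and a standard short computation shows that $\Phi$ is Lipschitz continuous on $\RR^d$ with constant $L$.

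It then remains to verify that the restriction of $\Phi$ to $\Omega$ lies in $H^1(\Omega)$. Because $\Omega$ is bounded, $\Phi$ is bounded on $\clos{\Omega}$, and in particular $\Phi\in L^2(\Omega)$. By Rademacher's theorem $\Phi$ is differentiable almost everywhere with $\abs{\nabla\Phi}\le L$ a.e., and the distributional and classical derivatives coincide; hence $\Phi\in W^{1,\infty}(\Omega)\subset H^1(\Omega)$. Thus $\Phi\in C(\clos{\Omega})\cap H^1(\Omega)$ is the extension required by condition~\ref{en:t2.7-ii} of Theorem~\ref{thm:2.7}, and the corollary follows.

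There is no real obstacle here; the only point that needs a moment's thought is that $W^{1,\infty}(\Omega)$ genuinely coincides with the space of bounded Lipschitz functions on~$\Omega$ (or, more simply, that a Lipschitz function on $\clos{\Omega}$ has bounded weak partial derivatives), which is a classical fact. Everything else is a direct application of Theorem~\ref{thm:2.7}.
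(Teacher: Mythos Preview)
Your proof is correct and follows essentially the same approach as the paper: extend $\phi$ to a Lipschitz function on $\RR^d$ via the McShane formula, observe that the restriction to $\Omega$ lies in $W^{1,\infty}(\Omega)\subset H^1(\Omega)$, and then apply Theorem~\ref{thm:2.7}. The only difference is that you spell out the McShane extension and invoke Rademacher's theorem explicitly, whereas the paper simply cites the relevant results from the literature.
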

\begin{proof}
By~\cite[Theorem~1]{McS34} or~\cite[Theorem~3.1]{EG2015} there exists a Lipschitz continuous extension $\Phi\in C(\RR^d)$ of $\phi$.
It follows from~\cite[Theorem~4.5]{EG2015} that $\restrict{\Phi}{\Omega}\in W^{1,\infty}(\Omega)\subset H^1(\Omega)$.
\end{proof}

Prym~\cite{Pry71} in 1871 and independently Hadamard~\cite{Had06} in 1906 showed that for $\mathds{D}=\{x\in\RR^2 \setcolon \abs{x}<1\}$ there exists $\phi\in C(\partial\mathds{D})$ such that $\int_{\mathds{D}}\abs{\nabla u_\phi}^2=\infty$ (see~\cite[Section~6.9]{AU23} and~\cite[Section~12.3]{MS98}).
Thus, this $\phi$ does not have any extension $\Phi\in C(\clos{\mathds{D}})\cap H^1(\mathds{D})$.

If $\Omega$ is more regular, the following holds.
\begin{thm}\label{thm:2.9}
Assume that $\Omega$ has Lipschitz boundary and let $\phi\in C(\partial\Omega)$. The following statements are equivalent.
\begin{romanenum}
\item $u_\phi\in H^1(\Omega)$
\item $\phi\in H^{1/2}(\partial\Omega)$
\end{romanenum}
\end{thm}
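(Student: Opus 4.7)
The plan is to use Theorem~\ref{thm:2.7} to recast condition~(i) as the existence of an extension $\Phi\in C(\clos{\Omega})\cap H^1(\Omega)$ of $\phi$, and then relate this to $\phi\in H^{1/2}(\partial\Omega)$ via the trace theory on Lipschitz domains.

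For (i)$\Rightarrow$(ii), Theorem~\ref{thm:2.7} supplies $\Phi\in C(\clos{\Omega})\cap H^1(\Omega)$ with $\restrict{\Phi}{\partial\Omega}=\phi$. Since $\Omega$ is Lipschitz, the trace operator $\gamma\colon H^1(\Omega)\to H^{1/2}(\partial\Omega)$ is bounded, and on $H^1(\Omega)\cap C(\clos{\Omega})$ it agrees with the pointwise boundary restriction. Therefore $\phi=\gamma\Phi\in H^{1/2}(\partial\Omega)$.

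For the converse, the idea is to construct $u_\phi$ as a weak $H^1$-limit of variational solutions with regularised data. Mollifying $\phi$ along the Lipschitz manifold $\partial\Omega$ yields $\phi_n\in C^\infty(\partial\Omega)$ with $\phi_n\to\phi$ both uniformly and in $H^{1/2}(\partial\Omega)$. Each $\phi_n$ is Lipschitz, so the corollary following Theorem~\ref{thm:2.7} gives $u_{\phi_n}\in H^1(\Omega)$. Letting $E\colon H^{1/2}(\partial\Omega)\to H^1(\Omega)$ be a bounded right inverse of $\gamma$ (which exists on Lipschitz domains), and observing that $\Phi_n-E(\phi_n)\in H^1_0(\Omega)$ whenever $\Phi_n\in C(\clos{\Omega})\cap H^1(\Omega)$ extends $\phi_n$, the Dirichlet principle in Theorem~\ref{thm:2.7} yields
\[
\norm{\nabla u_{\phi_n}}_{L^2(\Omega)}\le\norm{\nabla E(\phi_n)}_{L^2(\Omega)}\le C\norm{\phi_n}_{H^{1/2}(\partial\Omega)},
\]
which stays bounded in $n$. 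Theorem~\ref{thm:2.4} gives $\norm{u_{\phi_n}-u_\phi}_{C_\bdd(\Omega)}\le\norm{\phi_n-\phi}_{C(\partial\Omega)}\to 0$, so $u_{\phi_n}\to u_\phi$ in $L^2(\Omega)$. Weak compactness of $(u_{\phi_n})$ in $H^1(\Omega)$ combined with this $L^2$-identification of the limit then forces $u_\phi\in H^1(\Omega)$.

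The main obstacle is assembling the Lipschitz trace theory: the existence of a bounded linear extension operator $E\colon H^{1/2}(\partial\Omega)\to H^1(\Omega)$, and a simultaneous approximation of $\phi\in C(\partial\Omega)\cap H^{1/2}(\partial\Omega)$ by smooth boundary data in both the uniform and the $H^{1/2}$ norms. These are classical but delicate consequences of the Lipschitz structure and are the essential external inputs for the argument.
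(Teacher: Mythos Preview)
Your argument is correct. The direction (i)$\Rightarrow$(ii) is exactly what the paper has in mind: Theorem~\ref{thm:2.7} produces $\Phi\in C(\clos{\Omega})\cap H^1(\Omega)$, and on a Lipschitz domain the trace operator agrees with pointwise restriction on this space, so $\phi=\operatorname{tr}\Phi\in H^{1/2}(\partial\Omega)$.

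For (ii)$\Rightarrow$(i) you take a genuinely different route. The paper does not prove this direction in the text; it cites~\cite[Theorem~1.2]{AtE19:dp}, whose key input is that every $\phi\in C(\partial\Omega)\cap H^{1/2}(\partial\Omega)$ admits an extension $\Phi\in C(\clos{\Omega})\cap H^1(\Omega)$, after which Theorem~\ref{thm:2.7} applies directly. You bypass this extension result: you approximate $\phi$ simultaneously in $C(\partial\Omega)$ and $H^{1/2}(\partial\Omega)$ by Lipschitz $\phi_n$, use the ordinary (not necessarily continuous) right inverse $E$ of the trace as a competitor in the Dirichlet principle to bound $\norm{\nabla u_{\phi_n}}_{L^2}$ uniformly, and then identify the weak $H^1$ limit with $u_\phi$ via the maximum principle. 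The step $\Phi_n-E(\phi_n)\in H^1_0(\Omega)$ is justified because $\ker(\operatorname{tr})=H^1_0(\Omega)$ on Lipschitz domains, so $E(\phi_n)$ is indeed an admissible competitor in the minimisation of Theorem~\ref{thm:2.7}. What your approach buys is that it relies only on the standard trace/extension package and mollification on a Lipschitz manifold, both more elementary than the continuous $H^1$ extension result of~\cite{AtE19:dp}; the price is the additional limiting argument. Both external inputs you flag (the bounded right inverse $E$ and the simultaneous $C$/$H^{1/2}$ density of smooth functions) are indeed classical for Lipschitz boundaries.
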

Here, since $\Omega$ has Lipschitz boundary, $H^{1/2}(\partial\Omega)=\{\operatorname{tr}\Phi \setcolon \Phi\in H^1(\Omega)\}$, where $\operatorname{tr}\colon H^1(\Omega)\to L^2(\partial\Omega,\mathcal{H}^{d-1})$ is the trace operator and $\mathcal{H}^{d-1}$ denotes the $(d-1)$-dimensional Hausdorff measure.
If $\phi\in C(\partial\Omega)\cap H^{1/2}(\partial\Omega)$, there even exists a $\Phi\in C(\clos{\Omega})\cap H^1(\Omega)$ such that $\restrict{\Phi}{\partial\Omega}=\phi$.
We refer to~\cite[Theorem~1.2]{AtE19:dp} for this and a proof of Theorem~\ref{thm:2.9}.
A more general result on very rough domains is~\cite[Theorem~5.8]{AtES24}.

While we focus on general irregular bounded domains with continuous boundary data here,
one can also consider elliptic problems on more regular domains, but with more irregular boundary data. For information on elliptic problems with rough boundary data we refer to the recent papers~\cite{ADM21} by Anop, Denk and Murach and~\cite{DPRS23} by Denk, Plo{\ss}, Rau and Seiler.

\section{Quasi-everywhere convergence and the Perron solution}
\label{sec:3}
We now investigate the pointwise behaviour of $u_\phi(x)$ as $x\to z$ where $z\in \partial\Omega$.

For $A\subset\RR^d$, we define the (Sobolev) \emphdef{capacity} of $A$ as
\[
    \capacity(A) = \inf\{ \norm{v}_{H^1(\RR^d)}^2 \setcolon \text{$v\in H^1(\RR^d)$,
        $v\ge 1$ in a neighbourhood of $A$}\}.
\]
If $\capacity(A)=0$, then clearly $\abs{A}=0$ (where $\abs{A}$ denotes the $d$-dimensional Lebesgue measure).
If $d\ge 2$, one has $\capacity(\{x\})=0$ for all $x\in\RR^d$. In dimension $d\ge 3$, even lines in $\RR^d$ have capacity zero.
Still, in $\RR^2$ any proper line segment has strictly positive capacity.
We say that a property $P(x)$ holds for \emphdef{quasi-every} $x\in A$ for a subset $A\subset\RR^d$, if the set $\{x\in A\setcolon\text{$P(x)$ fails}\}$ has capacity $0$.
With a similar but slightly more complicated proof than the one of Lemma~\ref{lem:2.2} we will show the following.

\begin{lem}\label{lem:3.1}
Let $v\in C_\bdd(\Omega)$ be such that $\Delta v\in H^{-1}(\Omega)$ and
\[
    \lim_{x\to z} v(x)=0
\]
for quasi-every $z\in \partial\Omega$. Then $v\in H^1_0(\Omega)$.
\end{lem}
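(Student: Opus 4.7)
My plan is to follow the architecture of the proof of Lemma~\ref{lem:2.2}, replacing the compact-support argument by a capacity cutoff. Riesz--Fréchet followed by Weyl's theorem gives, exactly as in Lemma~\ref{lem:2.2}, a function $v_0\in H^1_0(\Omega)$ with $\Delta v_0=\Delta v$ and $v-v_0\in C^\infty(\Omega)$, so that $v\in H^1_\loc(\Omega)$. It then suffices to show that $v^+\in H^1_0(\Omega)$ (the argument for $v^-$ being symmetric), and for this I will establish the uniform bound $\norm{\nabla(v-\delta)^+}_{L^2(\Omega)}\le\norm{\Delta v}_{H^{-1}(\Omega)}$ for every $\delta>0$; letting $\delta\downarrow 0$ and extracting a weakly convergent subsequence in $H^1_0(\Omega)$ then concludes.

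Let $E:=\{z\in\partial\Omega\setcolon v(x)\not\to 0\text{ as }x\to z\}$, so $\capacity(E)=0$ by hypothesis. The main obstacle is that $(v-\delta)^+$ is no longer compactly supported in $\Omega$, due to $E$. To cut out $E$, I would use capacity zero to pick $\psi_n\in H^1(\RR^d)$ with $0\le\psi_n\le 1$, $\psi_n=1$ on an open neighbourhood $U_n$ of $E$, and $\norm{\psi_n}_{H^1(\RR^d)}\to 0$, then set $\eta_n:=1-\psi_n$. The test function $V_n:=(v-\delta)^+\eta_n^2$ will then lie in $H^1_\cpt(\Omega)\subseteq H^1_0(\Omega)$: indeed $\eta_n$ vanishes on $U_n$, while $v(x)\to 0$ at each $z\in\partial\Omega\setminus E$ forces $(v-\delta)^+$ to vanish in an open neighbourhood of every such $z$, and compactness of $\partial\Omega$ then produces a full open neighbourhood of $\partial\Omega$ on which $V_n$ vanishes.

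Testing $\Delta v\in H^{-1}(\Omega)$ against $V_n$ (via the approximation of $V_n\in H^1_\cpt(\Omega)$ by $C^\infty_\cpt(\Omega)$ functions, which is licit because $v\in H^1_\loc(\Omega)$ supplies the required $L^2$ integrability on the compact support of $V_n$), and expanding with the product rule together with the identity $\nabla v=\nabla(v-\delta)^+$ on $\{v>\delta\}$, yields
\[
  \int_\Omega\eta_n^2\abs{\nabla(v-\delta)^+}^2\dx + 2\int_\Omega(v-\delta)^+\eta_n\nabla(v-\delta)^+\cdot\nabla\eta_n\dx = \langle-\Delta v,V_n\rangle.
\]
A Cauchy--Schwarz plus Peter--Paul absorption against $\norm{\nabla\eta_n}_{L^2}\to 0$, combined with $\abs{\langle-\Delta v,V_n\rangle}\le\norm{\Delta v}_{H^{-1}}\norm{V_n}_{H^1_0}$ and the elementary bound $\norm{V_n}_{H^1_0}^2\le 2\int\eta_n^2\abs{\nabla(v-\delta)^+}^2\dx+8\norm{v}_\infty^2\norm{\nabla\eta_n}_{L^2}^2$, then produces uniform-in-$n$ bounds on both $\int\eta_n^2\abs{\nabla(v-\delta)^+}^2\dx$ and $\norm{V_n}_{H^1_0(\Omega)}$.

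Weak compactness in $H^1_0(\Omega)$ together with $V_n\to(v-\delta)^+$ in $L^2(\Omega)$ (dominated convergence, using $\eta_n\to 1$ a.e.\ after passing to a subsequence) then places $(v-\delta)^+\in H^1_0(\Omega)$. Passing to the limit $n\to\infty$ in the displayed identity, dominated convergence sends the first term to $\int\abs{\nabla(v-\delta)^+}^2\dx$, Cauchy--Schwarz with $\norm{\nabla\eta_n}_{L^2}\to 0$ kills the cross term, and the right-hand side converges to $\langle-\Delta v,(v-\delta)^+\rangle$, yielding
\[
    \int_\Omega\abs{\nabla(v-\delta)^+}^2\dx \le \norm{\Delta v}_{H^{-1}(\Omega)}\norm{\nabla(v-\delta)^+}_{L^2(\Omega)},
\]
which is the required uniform bound. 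I expect the hard part to be the cross-term absorption: since $\nabla v$ is globally only known to be locally $L^2$, the argument depends essentially on the identity $\nabla v=\nabla(v-\delta)^+$ on $\{v>\delta\}$ to trade the uncontrolled $\nabla v$ for the quantity being estimated.
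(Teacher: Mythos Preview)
Your proposal is correct and follows essentially the same route as the paper's proof: the test function $V_n=(v-\delta)^+\eta_n^2$ is exactly the paper's $v_{n,\delta}=(1-\chi_n)^2(v-\delta)^+$, and both arguments derive a quadratic inequality for $\bigl(\int\eta_n^2\abs{\nabla(v-\delta)^+}^2\bigr)^{1/2}$ from the pairing with $\Delta v$, absorb the cross term using $\norm{\nabla\eta_n}_{L^2}\to 0$ and boundedness of $v$, and conclude via weak compactness in $H^1_0(\Omega)$. The only cosmetic differences are that the paper passes to the limit in $n$ via Fatou's lemma (obtaining a generic constant $C_3$) whereas you first place $(v-\delta)^+$ in $H^1_0(\Omega)$ by weak compactness and then recover the sharp constant $\norm{\Delta v}_{H^{-1}(\Omega)}$ by passing to the limit in the identity itself.
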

\begin{proof}[Proof ({cf.~\cite[Lemma~3.11]{AtES24}})]
Let $F=\Delta v\in H^{-1}(\Omega)$ and let $P\subset \partial\Omega$ be such that $\capacity(P)=0$ and $\lim_{x\to z} v(x)=0$ for all $z\in \partial\Omega\setminus P$.
As in the proof of Lemma~\ref{lem:2.2} one obtains $v\in H^1_\loc(\Omega)$.
There exist $\chi_n\in H^1(\RR^d)$ such that $0\le\chi_n\le 1$, $\norm{\chi_n}_{H^1(\RR^d)}\le\frac{1}{n}$ and $\chi_n=1$ in a neighbourhood of $P$.
For $\delta>0$ and $n\in\NN$ define
\[
    v_{n,\delta} := (1-\chi_n)^2(v-\delta)^+\in H^1_\cpt(\Omega).
\]
Then
\[
    \nabla v_{n,\delta} = -2(1-\chi_n)(v-\delta)^+\nabla\chi_n + (1-\chi_n)^2\one_{[v>\delta]}\nabla v.
\]
Since $\Delta v=F$ and $v\in H^1_\loc(\Omega)$, one has
\[
    \abs*{\int_\Omega\nabla v\nabla w\dx}=\abs{F(w)}\le M_1\p(\int_\Omega\abs{\nabla w}^2\dx)^{\frac{1}{2}}
\]
for all $w\in H^1_\cpt(\Omega)$ and some $M_1\ge 0$.
It follows that
\begin{align*}
    M_1\norm{\nabla v_{n,\delta}}_{L^2} &\ge \int_\Omega\nabla v\nabla v_{n,\delta}\dx \\
        &=A_{n,\delta}^2 - 2\int_\Omega(1-\chi_n)(v-\delta)^+\nabla v\nabla\chi_n\dx,
\end{align*}
where $A_{n,\delta}=\norm{(1-\chi_n)\one_{[v>\delta]}\nabla v}_{L^2}$.

On the other hand, using that $v$ is bounded, we obtain
\begin{align*}
    \norm{\nabla v_{n,\delta}}_{L^2}&\le C_1 + \norm{(1-\chi_n)^2\one_{[v>\delta]}\nabla v}_{L^2} \\
        &\le C_1 + \norm{(1-\chi_n)\one_{[v>\delta]}\nabla v}_{L^2} = C_1 + A_{n,\delta}
\end{align*}
for some constant $C_1\ge0$ independent of $n$ and $\delta>0$.
Thus
\begin{align*}
  A_{n,\delta}^2
  &\le M_1(C_1+A_{n,\delta}) + \abs*{\int_\Omega 2(1-\chi_n)(v-\delta)^+\nabla v\nabla\chi_n\dx}\\
  &\le M_1(C_1+A_{n,\delta}) + C_2 A_{n,\delta}
\end{align*}
since $v$ is bounded.
This implies that $A_{n,\delta}\le C_3$ for all $n\in\NN$ and $\delta>0$.
We may assume that $\chi_n\to 0$ a.e. By Fatou's lemma
\[
    \p(\int_\Omega\abs{\nabla(v-\delta)^+}^2\dx)^{\frac{1}{2}}\le\liminf_{n\to\infty} A_{n,\delta}\le C_3
\]
for all $\delta>0$. As in the proof of Lemma~\ref{lem:2.2} this implies that $v^+\in H^1_0(\Omega)$.
Replacing $v$ by $-v$ we obtain $v^-\in H^1_0(\Omega)$ and so $v=v^+-v^-\in H^1_0(\Omega)$.
\end{proof}

We recall the definition of the Perron solution of~\ref{eq:D}.
Given $\phi\in C(\partial\Omega)$, a function $u\in C(\clos{\Omega})$ is called a \emphdef{subsolution} of~\ref{eq:D} if
\begin{alenum}
\item $\limsup_{x\to z} u(x)\le \phi(z)$ for all $z\in \partial\Omega$, and
\item $-\Delta u\le 0$ weakly; i.e.\ $-\int_\Omega u\Delta w\dx\le 0$ for all $0\le w\in C^\infty_\cpt(\Omega)$.
\end{alenum}
Similarly one defines supersolutions.
Then Perron~\cite{Per1923} considers
\begin{align*}
    \overline{u}(x) &:= \inf\{w(x) \setcolon \text{$w$ is a supersolution of~\ref{eq:D}}\}
\shortintertext{and}
    \underline{u}(x) &:= \sup\{w(x) \setcolon \text{$w$ is a subsolution of~\ref{eq:D}}\}
\end{align*}
for all $x\in\Omega$. He shows that $\underline{u}=\overline{u}\in C^\infty(\Omega)$, $\Delta \underline{u}=0$ and that $\underline{u}=u$ whenever~\ref{eq:D} has a classical solution $u$.
The function $\underline{u}=\overline{u}$ is called the \emphdef{Perron solution} of~\ref{eq:D}.
We refer to Gilbarg and Trudinger~\cite[Sections~2.8 and~2.9]{GT01} for a brief introduction and to Dautray and Lions~\cite[Chapter~2]{DL90:vol1} for more information.

For the Perron solution the following characterisation due to Vasilesco~\cite{Vas35} in terms of the boundary behaviour is known.
\begin{thm}\label{thm:3.2}
Let $\phi\in C(\partial\Omega)$ and let $u$ be a bounded harmonic function on $\Omega$. The following statements are equivalent.
\begin{romanenum}
\item $\lim_{x\to z}u(x)=\phi(z)$ for quasi-every $z\in \partial\Omega$.
\item $u$ is the Perron solution of~\ref{eq:D}.
\end{romanenum}
\end{thm}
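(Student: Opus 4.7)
The plan is to split the equivalence into two pieces: the implication (ii)$\Rightarrow$(i) follows from the classical Kellogg--Wiener theory, while (i)$\Rightarrow$(ii) follows from a uniqueness statement for bounded harmonic functions satisfying~(i), which in turn is a direct consequence of Lemma~\ref{lem:3.1}.

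For (ii)$\Rightarrow$(i) I would invoke the Kellogg--Wiener theorem (see e.g.~\cite[Sections~2.8 and~2.9]{GT01} or~\cite[Chapter~2]{DL90:vol1}): the set of irregular points of $\partial\Omega$ has capacity zero, and at every regular boundary point the Perron solution attains any continuous boundary datum classically. In particular, the Perron solution of~\ref{eq:D} is a bounded harmonic function on $\Omega$ that satisfies~(i).

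For (i)$\Rightarrow$(ii) the crucial step is uniqueness: if $u_1$ and $u_2$ are bounded harmonic functions on $\Omega$ both satisfying~(i) with the same $\phi$, then $u_1=u_2$. Setting $v:=u_1-u_2$, we have $v\in C_\bdd(\Omega)$ and $\Delta v=0\in H^{-1}(\Omega)$; since the union of two sets of capacity zero still has capacity zero, $\lim_{x\to z}v(x)=0$ for quasi-every $z\in\partial\Omega$, so Lemma~\ref{lem:3.1} gives $v\in H^1_0(\Omega)$. Choosing $v_n\in C^\infty_\cpt(\Omega)$ with $v_n\to v$ in $H^1_0(\Omega)$ and using $\Delta v=0$, we obtain
\[
    \int_\Omega\abs{\nabla v}^2\dx = \lim_{n\to\infty}\int_\Omega\nabla v\,\nabla v_n\dx = -\lim_{n\to\infty}\int_\Omega v_n\Delta v\dx = 0.
\]
Hence $v$ is constant on the connected set $\Omega$, and, being in $H^1_0(\Omega)$, vanishes identically. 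Applied to any bounded harmonic $u$ on $\Omega$ satisfying~(i) together with the Perron solution, this yields that $u$ equals the Perron solution.

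The main obstacle is the uniqueness step, but essentially all of its difficulty is absorbed into Lemma~\ref{lem:3.1}: the substantial content is passing from the pointwise quasi-everywhere vanishing at the boundary of a bounded continuous function with distributional Laplacian in $H^{-1}(\Omega)$ to membership in $H^1_0(\Omega)$. Once this lemma is in hand, the rest is a short integration-by-parts calculation that uses only the harmonicity of the difference and the density of $C^\infty_\cpt(\Omega)$ in $H^1_0(\Omega)$.
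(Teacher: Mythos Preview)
Your argument is correct, but note that the paper does not actually prove Theorem~\ref{thm:3.2}: it is stated as a classical result due to Vasilesco~\cite{Vas35} and is then used as a black box in the subsequent theorem identifying the variational and Perron solutions. So there is no ``paper's own proof'' to compare against.

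That said, your route is worth highlighting. For (ii)$\Rightarrow$(i) you invoke exactly the classical ingredients (Kellogg's theorem and attainment of continuous boundary data at regular points) that the paper also references. For (i)$\Rightarrow$(ii) you observe that Lemma~\ref{lem:3.1} already yields the uniqueness of bounded harmonic functions with prescribed quasi-everywhere boundary limits: if $u$ satisfies (i) and $u_P$ is the Perron solution (which satisfies (i) by the first implication), then $v=u-u_P\in C_\bdd(\Omega)$ has $\Delta v=0\in H^{-1}(\Omega)$ and vanishes quasi-everywhere at the boundary, whence $v\in H^1_0(\Omega)$ and $v=0$. This is precisely the mechanism the paper uses to show $u_\phi$ equals the Perron solution, and your observation shows that the same lemma in fact recovers the full Vasilesco characterisation, so that the citation of~\cite{Vas35} could be replaced by this self-contained argument using only Lemma~\ref{lem:3.1} and Kellogg's theorem.
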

Now we deduce the following result from Lemma~\ref{lem:3.1}.

\begin{thm}
Let $\phi\in C(\partial\Omega)$. Then the variational solution $u_\phi$ of~\ref{eq:D} coincides with the Perron solution of~\ref{eq:D}.
\end{thm}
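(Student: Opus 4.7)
The plan is to combine Vasilesco's quasi-everywhere characterisation (Theorem~\ref{thm:3.2}) of the Perron solution with Lemma~\ref{lem:3.1} and the uniqueness in Theorem~\ref{thm:1.1}. Denote the Perron solution of~\ref{eq:D} by $u_P$. By Perron's construction $u_P$ is harmonic on $\Omega$; moreover the constants $\min_{\partial\Omega}\phi$ and $\max_{\partial\Omega}\phi$ are admissible as sub- and supersolution, so $u_P$ is bounded. Theorem~\ref{thm:3.2} then yields $\lim_{x\to z}u_P(x)=\phi(z)$ for quasi-every $z\in\partial\Omega$.

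Next I would pick any extension $\Phi\in C(\clos{\Omega})$ of $\phi$ with $\Delta\Phi\in H^{-1}(\Omega)$, as supplied by Theorem~\ref{thm:2.1}, and set $v := \Phi - u_P$. Then $v\in C_\bdd(\Omega)$ because $\Phi$ is bounded on $\clos{\Omega}$ and $u_P$ is smooth and bounded on $\Omega$, while $\Delta v=\Delta\Phi\in H^{-1}(\Omega)$ since $u_P$ is harmonic. For quasi-every $z\in\partial\Omega$ the continuity of $\Phi$ up to the boundary together with Vasilesco's conclusion gives $\lim_{x\to z}v(x)=\phi(z)-\phi(z)=0$. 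Hence Lemma~\ref{lem:3.1} applies and yields $v\in H^1_0(\Omega)$, i.e.\ $\Phi-u_P\in H^1_0(\Omega)$. Since $u_P$ is harmonic and $\Phi$ is a continuous extension of $\phi$, the uniqueness in Theorem~\ref{thm:1.1} (proved via Lemma~\ref{lem:2.2}) forces $u_P = u_\phi$.

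I do not expect any genuine obstacle: the whole argument is an assembly of already-established ingredients. The one place where something non-trivial happens is in the appeal to Lemma~\ref{lem:3.1}, whose hypotheses are tailored precisely to accommodate a boundary exceptional set of capacity zero -- exactly what Vasilesco's theorem produces. In particular, the weaker Lemma~\ref{lem:2.2} would not suffice here, because $u_P$ need not extend continuously to $\partial\Omega$, so $\Phi - u_P$ is merely bounded and continuous on $\Omega$ rather than in $C_0(\Omega)$.
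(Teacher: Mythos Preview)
Your proposal is correct and follows essentially the same route as the paper: choose an extension $\Phi$ with $\Delta\Phi\in H^{-1}(\Omega)$, form $v=\Phi-u_P$, verify the hypotheses of Lemma~\ref{lem:3.1} via Vasilesco's theorem, and conclude $v\in H^1_0(\Omega)$ so that $u_P=u_\phi$. Your closing observation that Lemma~\ref{lem:2.2} would not suffice here is also apt.
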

\begin{proof}
Let $\Phi\in C(\clos{\Omega})$ be such that $\restrict{\Phi}{\partial\Omega}=\phi$ and $\Delta\Phi\in H^{-1}(\Omega)$.
Let $u$ be the Perron solution. Then $v:=\Phi-u\in C_\bdd(\Omega)$ and $\lim_{x\to z} v(x)=0$ for quasi-every $z\in \partial\Omega$. Since $\Delta v = \Delta\Phi\in H^{-1}(\Omega)$, it follows from Lemma~\ref{lem:3.1} that $v\in H^1_0(\Omega)$.
Thus $u_\phi=\Phi-v=u$.
\end{proof}

The variational solution was first defined in~\cite[Theorem~1.1]{AD08:var} and it was shown that it coincides with the Perron solution using a completely different characterisation of the Perron solution based on domain approximation from the interior.
However, at that time the extension result, Theorem~\ref{thm:2.1}, was not known. So the variational solution was only defined provided such an extension was available and it was formulated as an open problem in~\cite[Remark~1.6\,(b)]{AD08:var} whether such extensions always exist.

In~\cite{AtES24} a generalised solution is defined differently. For $\phi=\restrict{\Phi}{\partial\Omega}$ with $\Phi\in C(\clos{\Omega})\cap H^1(\Omega)$, let $T\phi := u_\phi$ with our definition of $u_\phi$.
Then $\norm{T\phi}_{C_\bdd(\Omega)}\le \norm{\phi}_{C(\partial\Omega)}$ by the maximum principle (which is much easier to prove here since $\Phi\in H^1(\Omega)$).
So $T$ has a unique contractive extension $\widetilde{T}\colon C(\partial\Omega)\to C_\bdd(\Omega)$, and $\widetilde{T}\phi$ is then investigated for arbitrary $\phi$.
It is shown that $\widetilde{T}\phi$ coincides with the Perron solution for all $\phi\in C(\partial\Omega)$.

The technically demanding proof of the extension result, Theorem~\ref{thm:2.1}, actually works by showing that the boundary behaviour of a function $w\in C_\bdd(\Omega)$ such that $\Delta w\in H^{-1}(\Omega)$ and $\lim_{x\to z}w(x)=\phi(z)$ for quasi-every $z\in \partial\Omega$ can be iteratively improved by small $H^1_0(\Omega)$ perturbations. After a limiting process of such improvements, one arrives at an extension $\Phi\in C(\clos{\Omega})$ with $\Delta\Phi\in H^{-1}(\Omega)$.
In~\cite{AtES24} the solution operator $\widetilde{T}$ is defined for more general elliptic operators and in a setting where no maximum principle is available, and the extension result is then proved in~\cite[Corollary~4.3]{AtES24} using the solution $\widetilde{T}\phi$ as the starting function $w$.
The fact that here we consider the Laplacian instead of more general elliptic operators as in~\cite{AtES24} does not lead to a significantly simplified proof of Theorem~\ref{thm:2.1}.

\section{Lebesgue's cusp and domain}\label{sec:4}

We study the pointwise convergence of $u_\phi(x)$ as $x\to z$, where $z$ is a point on the boundary of $\Omega$.
A point $z\in \partial\Omega$ is called \emphdef{regular} for the Dirichlet problem on $\Omega$ if $\lim_{x\to z}u_\phi(x)=\phi(z)$ for all $\phi\in C(\clos{\Omega})$, and $z$ is \emphdef{singular} if it is not regular.
By Kellogg's theorem~\cite[Theorem~6.3.4]{AH96} the set of all singular points in $\partial\Omega$ has capacity $0$.
In dimension $d=2$ a point $z\in \partial\Omega$ is regular whenever there is a line segment $[z,b] := \{(1-t)z+tb \setcolon 0\le t\le 1\}$ with $b\in\RR^2\setminus\{z\}$ such that $\clos{\Omega}\cap[z,b] = \{z\}$.
This is no longer true if $d=3$ as we will see below.
But if there exists a proper circular finite closed cone $C_z$ with vertex $z$ such that $C_z\cap\clos{\Omega}=\{z\}$, then $z$ is a regular point for $d\ge 3$.

We say that $\Omega$ is \emphdef{Dirichlet regular}, if each $z\in \partial\Omega$ is regular.
This is equivalent to saying that for each $\phi\in C(\partial\Omega)$ there exists a (necessarily unique) classical solution of~\ref{eq:D}, and $u_\phi$ is this classical solution by Proposition~\ref{prop:2.3}.
In dimension $d=2$ every simply connected bounded open set is Dirichlet regular; see~\cite[Theorem~4.2.1]{Ran95} or \cite[Corollary~X.4.18]{Con78}.
But given $x_0\in\Omega$, the set $\Omega\setminus\{x_0\}$ is not regular.
In dimension $d\ge 3$, $\Omega$ is Dirichlet regular if it satisfies the exterior cone condition (as we have mentioned above); and so, in particular, if $\Omega$ has Lipschitz boundary.
But already for $d=3$ it does not suffice that $\Omega$ is simply connected or that $\Omega$ has continuous boundary (see e.g.~\cite[Remark~7.2\,(d)]{AU23} for the definition).
A counterexample is given by Lebesgue's famous cusp.
Actually, Lebesgue defines a domain $\Omega$ in $\RR^3$ which is described in his single page article from 1913~\cite{Leb1913} shown in full in Figure~\ref{fig:lebesgue-paper}.

\begin{figure}[th]
\centering
\fbox{\includegraphics[totalheight=0.87\textheight]{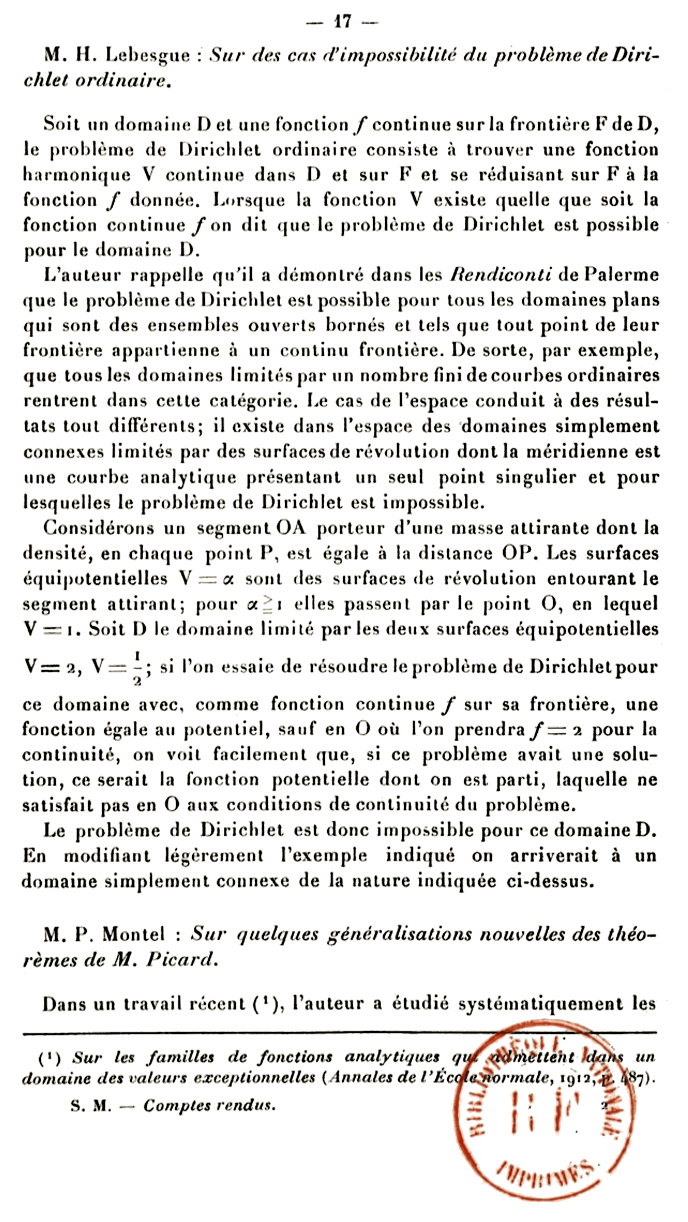}}
\caption{Lebesgue's paper \cite{Leb1913} (Source \href{https://gallica.bnf.fr/ark:/12148/bpt6k9446739}{\nolinkurl{gallica.bnf.fr}} / Bibliothèque nationale de France).}
\label{fig:lebesgue-paper}
\end{figure}
%%
%% Non-commercial reuse of these contents, or reuse within an academic or scientific publication, is free of charge provided that current legislation is respected, and that the source is properly cited as follows: “Source gallica.bnf.fr / Bibliothèque nationale de France” or “Source gallica.bnf.fr / BnF.”
%%
We now describe Lebesgue's domain in more detail.
It is remarkable that it is defined by a physical situation in terms of a mass (or charge) distribution on a segment $S:=\{(0,0,z)\setcolon 0\le z\le L\}$.
Lebesgue considers $L=1$ and the mass density $\rho(z)=z$, $z\in[0,1]$. We will start by considering more general densities. Assume that $\rho\in C([0,L];[0,\infty))$ with $\rho(0)=0$ and $\rho(z)>0$ for all $z\in (0,L]$. This gives rise to the Radon measure $\mu=\tilde{\rho}\cdot\mathcal{H}^1\niv S$ on $\RR^3$, where $\mathcal{H}^1$ denotes the \mbox{$1$-dimensional} Hausdorff measure and $\tilde{\rho}(x,y,z)=\rho(z)$ for $z\in[0,L]$ and $\tilde{\rho}(x,y,z)=0$ otherwise. The mass distribution is radially symmetric with respect to the $z$-axis, so we use cylindrical coordinates to represent the potential $V$. Writing $r:=\sqrt{x^2+y^2}$, it is given by the convolution of the measure $\mu$ with the Newtonian potential, that is,
\begin{equation}
  \label{eq:lebesgue-potential}
  V(r,z)=\int_0^L\frac{\rho(\zeta)}{\sqrt{(\zeta-z)^2+r^2}}\dx[\zeta].
\end{equation}
We assume that
\begin{equation}
  \label{eq:lebesgue-potential-critical}
  V(0,0):= \int_0^L\frac{\rho(\zeta)}{\zeta}\dx[\zeta]<\infty.
\end{equation}
This is clearly the case for Lebesgue's choice $L=1$ and $\rho(z)=z$, resulting in $V(0,0)=1$. Then $V$ is harmonic on the domain
\begin{displaymath}
  D:=\RR^3\setminus\{(0,0,z)\setcolon 0\le z\le L\}.
\end{displaymath}
We start by establishing a result that allows us to analyse the level sets of $V$.
\begin{lem}
  \label{lem:lebesgue-potential-basics}
  Let $V\colon D\to(0,\infty)$ be the potential defined by~\eqref{eq:lebesgue-potential} and suppose that~\eqref{eq:lebesgue-potential-critical} is satisfied. Then the following assertions are true.
  \begin{alenum}
  \item\label{itm:V-monotone} For every $z\in\RR$ the map $V(\cdot\,,z)$ is strictly decreasing on $(0,\infty)$. Moreover,
    \begin{displaymath}
      \lim_{r\to 0+}V(r,z)=
      \begin{dcases}
        V(0,z)&\text{if }z\notin(0,L],\\
        \infty&\text{if }z\in(0,L].
      \end{dcases}
    \end{displaymath}
    and
    \begin{displaymath}
      \lim_{|z|+r\to\infty}V(r,z)=0.
    \end{displaymath}
  \item\label{itm:V-contour-regular} For all $(r,z)\in D$ we have $(\nabla V)(r,z)\neq 0$.
  \item\label{itm:V-continuous-0} If $\alpha\in[0,\pi/2)$ and $D_\alpha:=\{(r,z)\in D\setcolon z\le\tan(\alpha)r\}$, then
    \begin{equation}
      \label{eq:V-sector-domination}
      V(r,z)\le\sec(\alpha)V(0,0)
    \end{equation}
    for all $(r,z)\in D_\alpha$ and  $V\colon D_\alpha\cup\{(0,0)\}\to(0,\infty)$ is continuous.
  \end{alenum}
\end{lem}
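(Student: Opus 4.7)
The proof is a direct analysis of the integral representation~\eqref{eq:lebesgue-potential}. For~\ref{itm:V-monotone}, differentiating under the integral gives
\[
\partial_r V(r,z) = -\int_0^L \frac{\rho(\zeta)\,r}{((\zeta-z)^2+r^2)^{3/2}}\dx[\zeta]<0
\]
for $r>0$, since $\rho\geq 0$ is strictly positive on $(0,L]$. The limit as $r\to 0+$ is handled by monotone convergence applied to the increasing family $1/\sqrt{(\zeta-z)^2+r^2}\uparrow 1/|\zeta-z|$: if $z\notin(0,L]$, then $\rho(\zeta)/|\zeta-z|$ is integrable on $[0,L]$ (in the critical case $z=0$ this is precisely the hypothesis~\eqref{eq:lebesgue-potential-critical}); if $z\in(0,L]$, positivity of $\rho$ near $\zeta=z$ forces a non-integrable singularity, so $V(0,z)=\infty$. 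The limit as $|z|+r\to\infty$ follows from dominated convergence since the integrand tends to $0$ uniformly in $\zeta\in[0,L]$.

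For~\ref{itm:V-contour-regular}, the case $r>0$ is already covered by~\ref{itm:V-monotone}. At an axis point $(0,z)\in D$, i.e.\ $z\notin[0,L]$, the denominator $|\zeta-z|$ is bounded away from zero on $[0,L]$, so differentiation under the integral is legitimate and
\[
\partial_z V(0,z) = \int_0^L \frac{\rho(\zeta)(\zeta-z)}{|\zeta-z|^3}\dx[\zeta]
\]
is nonzero because $\zeta-z$ keeps constant sign throughout $[0,L]$.

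The substantive content is~\ref{itm:V-continuous-0}, which I would derive from the pointwise lower bound
\[
\sqrt{(\zeta-z)^2+r^2}\geq\zeta\cos\alpha\qquad\text{for all }(r,z)\in D_\alpha\text{ and }\zeta\in(0,L].
\]
In the $(r,z)$-plane this is the Euclidean distance from $(0,\zeta)$ to the closed region $D_\alpha$: the foot of the perpendicular from $(0,\zeta)$ onto the bounding ray $L_\alpha:=\{(r,\tan(\alpha)r):r\geq 0\}$ is $(\zeta\sin\alpha\cos\alpha,\zeta\sin^2\alpha)$, which lies on $L_\alpha$ and inside the half-plane $r\geq 0$, and is therefore the nearest point of $D_\alpha$ to $(0,\zeta)$; the corresponding distance is $\zeta\cos\alpha$. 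Integrating against $\rho$ yields~\eqref{eq:V-sector-domination}, and the same bound provides the integrable majorant $\rho(\zeta)/(\zeta\cos\alpha)$ needed to deduce continuity of $V$ on $D_\alpha\cup\{(0,0)\}$ via dominated convergence; for the cusp point $(0,0)$ one simply uses the pointwise limit $1/\sqrt{(\zeta-z)^2+r^2}\to 1/\zeta$ valid for every fixed $\zeta\in(0,L]$. The main obstacle is precisely this geometric step: one must verify that the perpendicular foot genuinely lies inside $D_\alpha$, since otherwise $\zeta\cos\alpha$ would only bound the distance to $L_\alpha$ rather than to $D_\alpha$ itself.
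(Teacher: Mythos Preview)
Your proof is correct. Parts~\ref{itm:V-monotone} and~\ref{itm:V-contour-regular} follow essentially the same line as the paper (monotone convergence for the $r\to 0+$ limit; sign of the $z$-derivative on the axis), though your treatment of~\ref{itm:V-contour-regular} is actually more careful than the paper's, which only invokes strict monotonicity of $V(0,\cdot)$ without noting that this alone does not rule out a vanishing derivative.

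For part~\ref{itm:V-continuous-0} the approaches differ. The paper splits into the cases $z\le 0$ and $z>0$: in the first case $\sqrt{(\zeta-z)^2+r^2}\ge\zeta$ is immediate, while in the second it bounds $r$ below by $z/\tan\alpha$ and then minimises the resulting quadratic in the variable $t=z/\zeta$ to obtain the factor $\sec\alpha$. Your route is more geometric: you observe that $\sqrt{(\zeta-z)^2+r^2}$ is the Euclidean distance from the point $(0,\zeta)$ to $(r,z)$, and that the closure of $D_\alpha$ is the convex sector $\{r\ge 0,\ z\le r\tan\alpha\}$, so the minimum of this distance over $D_\alpha$ is realised at the orthogonal projection onto the bounding ray, giving $\zeta\cos\alpha$ in one stroke without case analysis. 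Both arguments deliver the same integrable majorant $\sec(\alpha)\,\rho(\zeta)/\zeta$ and then conclude by dominated convergence. Your geometric argument is slightly cleaner; the paper's algebraic minimisation is more self-contained in that it does not require the reader to verify the convex-projection step you flag at the end (which, as you note, is the only place one has to be careful: the foot of the perpendicular does lie on the ray since $\zeta\sin\alpha\cos\alpha\ge 0$, and convexity of the sector then ensures it is the global minimiser).
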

\begin{proof}
  \ref{itm:V-monotone} The fact that $V(\cdot\,,z)$ is strictly decreasing for all $z\in\RR$ is obvious from~\eqref{eq:lebesgue-potential}. We also note that for every $z\in(0,L]$
  \begin{displaymath}
    \int_0^z\frac{\rho(\zeta)}{z-\zeta}\dx[\zeta]=\infty.
  \end{displaymath}
  By the monotone convergence theorem we therefore have that
  \begin{displaymath}
    \lim_{r\to 0+}\int_0^L\frac{\rho(\zeta)}{\sqrt{(\zeta-z)^2+r^2}}\dx[\zeta]
    =\int_0^L\frac{\rho(\zeta)}{|\zeta-z|}\dx[\zeta]
    =
    \begin{dcases}
      V(0,z)&\text{if }z\notin(0,L],\\
      \infty&\text{if }z\in(0,L],
    \end{dcases}
  \end{displaymath}
  where the case $z=0$ made use of~\eqref{eq:lebesgue-potential-critical}.

  \ref{itm:V-contour-regular} This follows since $V(\cdot\,,z)$ is strictly decreasing for all $z\in\RR$ and since $V(0,\cdot)$ is strictly monotone as a function of $z\in(-\infty,0]\cup(L,\infty)$.

  \ref{itm:V-continuous-0} If $z\le 0$, then
  \begin{displaymath}
    \frac{\rho(\zeta)}{\sqrt{(\zeta-z)^2+r^2}}
    \le\frac{\rho(\zeta)}{\zeta}
    \le\frac{\rho(\zeta)}{\zeta}\sec(\alpha)
  \end{displaymath}
  for all $r>0$ and $\zeta\in(0,L)$. Assume that $(r,z)\in D_\alpha$ with $z>0$. Then,
  \begin{displaymath}
    \frac{\rho(\zeta)}{\sqrt{(\zeta-z)^2+r^2}}
    =\frac{\rho(\zeta)}{\zeta}\frac{\zeta}{\sqrt{(\zeta-z)^2+r^2}}
    \le\frac{\rho(\zeta)}{\zeta}\frac{\zeta}{\sqrt{(\zeta-z)^2+\dfrac{z^2}{\tan^2(\alpha)}}}
  \end{displaymath}
  for all $\zeta\in(0,L]$. Note that
  \begin{displaymath}
    \frac{\zeta}{\sqrt{(\zeta-z)^2+\dfrac{z^2}{\tan^2(\alpha)}}}
    =\frac{1}{\sqrt{\mleft(1-\dfrac{z}{\zeta}\mright)^2+\dfrac{(z/\zeta)^2}{\tan^2(\alpha)}}}\le\sqrt{1+\tan^2(\alpha)}=\sec(\alpha)
  \end{displaymath}
  if we minimise the polynomial in the square root over $t=z/\zeta>0$. This proves~\eqref{eq:V-sector-domination}. As a consequence of the above estimates, the dominated convergence theorem implies that
  \begin{displaymath}
    \lim_{\substack{(r,z)\to (0,0)\\(r,z)\in D_\alpha}}V(r,z)
    =\lim_{\substack{(r,z)\to (0,0)\\(r,z)\in D_\alpha}}\int_0^L\frac{\rho(\zeta)}{\sqrt{(\zeta-z)^2+r^2}}\dx[\zeta]
    =\int_0^L\frac{\rho(\zeta)}{\zeta}\dx[\zeta]
    =V(0,0),
  \end{displaymath}
  proving the continuity of $V$ at $(0,0)$ as a function of $(r,z)\in D_\alpha$.
\end{proof}
Lebesgue defined his domain as the region between two level surfaces of $V$. We now look at the properties of the level surfaces. For $c\in\RR$ we let
\begin{equation}
  \label{eq:V-level-set}
  \Gamma_c:=\mleft\{(x,y,z)\in D\setcolon V\mleft(\sqrt{x^2+y^2},z\mright)=c\mright\}.
\end{equation}
We note that $\Gamma_c$ can also be considered as the surface of revolution obtained by revolving the contour line
\begin{equation}
  \label{eq:V-contour-lines}
  L_c:=\{(r,z)\in[0,\infty)\times\RR\setcolon (r,z)\in D\text{ and }V(r,z)=c\}\subset\RR^2\end{equation}
about the $z$-axis. We next collect properties of $\Gamma_c$ and $L_c$.
\begin{prop}
  \label{prop:lebesgue-contours}
  Suppose that $V$ is given by~\eqref{eq:lebesgue-potential}, where $\rho\in C([0,L];[0,\infty))$ with $\rho(0)=0$ and $\rho(z)>0$ for all $z\in(0,L]$. Further assume that~\eqref{eq:lebesgue-potential-critical} holds. Let $c\in(0,\infty)$. Then one has the following properties:
  \begin{romanenum}
  \item\label{itm:lebesgue-contours-i} $L_c\neq\emptyset$ and there exists a compact interval $[z_1,z_2]$ with $[0,L]\subset[z_1,z_2)$ and a function $r_c\in C([z_1,z_2];[0,\infty))$ such that $r_c$ is analytic and strictly positive on $(z_1,z_2)$, $r_c(z_1)=r_c(z_2)=0$ and
\begin{equation}\label{eq:contour-function}
    L_c\cup\{(z_1,0)\}=\{(z,r_c(z))\setcolon z\in[z_1,z_2]\}.
\end{equation}
  \item\label{itm:lebesgue-contours-ii} If $c<V(0,0)$, then $z_1<0$, $(z_1,0)\in L_c$ and $\Gamma_c$ is an analytic closed surface in $D$.
  \item\label{itm:lebesgue-contours-iii} If $c>V(0,0)$, then the curve $(z,r_c(z))$ approaches $(0,0)$ tangentially along the $z$-axis as $z\to 0+$, that is, $\Gamma_c$ is an analytic surface in $D$ with a cusp towards $(0,0,0)$ and $\clos{\Gamma_c}=\Gamma_c\cup\{(0,0,0)\}$.
  \end{romanenum}
\end{prop}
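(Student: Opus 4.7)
The plan is to realise the contour $L_c$ as the graph of a function $r = r_c(z)$ obtained by solving $V(r, z) = c$ in $r$ at each fixed $z$, and to leverage the real-analyticity of $V$ on $D$ together with $\nabla V \neq 0$ (Lemma~\ref{lem:lebesgue-potential-basics}\ref{itm:V-contour-regular}) for the analytic regularity. For part~\ref{itm:lebesgue-contours-i}, I would fix $c > 0$ and read off from Lemma~\ref{lem:lebesgue-potential-basics}\ref{itm:V-monotone} that the equation $V(r, z) = c$ has a unique solution $r = r_c(z) > 0$ exactly when $c < \sup_{r > 0} V(r, z)$, and that the set of admissible $z$ forms an interval $(z_1, z_2)$. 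The right endpoint $z_2 > L$ is the unique solution of $V(0, z_2) = c$, since $V(0, \cdot)$ decreases strictly from $\infty$ at $L^+$ to $0$ at $+\infty$. The left endpoint is the unique $z_1 < 0$ with $V(0, z_1) = c$ when $c < V(0, 0)$, and equals $z_1 = 0$ when $c \geq V(0, 0)$. Continuity of $r_c$ at the endpoints then follows from strict monotonicity of $V(\cdot, z)$ and the intermediate value theorem, and on $(z_1, z_2)$, since $V$ is real-analytic on $D$ with $\partial_r V(r_c(z), z) \neq 0$ by strict monotonicity, the analytic implicit function theorem yields $r_c$ as a real-analytic function.

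For part~\ref{itm:lebesgue-contours-ii}, when $c < V(0, 0)$ the left endpoint $z_1 < 0$ lies off the segment, so $(0, z_1) \in D$ satisfies $V(0, z_1) = c$ and hence $(0, z_1) \in L_c$. The subtle point is analyticity of $\Gamma_c$ at the two poles $(0, 0, z_1)$ and $(0, 0, z_2)$ on the axis of revolution, since $\sqrt{x^2 + y^2}$ is not analytic at the origin of the $(x, y)$-plane. The key observation is that $V$ depends on $r$ only through $r^2$, so one writes $V(r, z) = \widetilde{V}(r^2, z)$ with $\widetilde{V}$ real-analytic in $(s, z)$ near each $(0, z_i)$ (the denominators $(\zeta - z_i)^2 + s$ are bounded away from $0$ for $\zeta \in [0, L]$ since $z_i \notin [0, L]$). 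Applying the analytic implicit function theorem to $\widetilde{V}$ yields an analytic $s_c(z)$ with $s_c(z_i) = 0$; the required nonvanishing $\partial_s \widetilde{V}(0, z_i) = -\tfrac{1}{2} \int_0^L \rho(\zeta) \abs{\zeta - z_i}^{-3} \dx[\zeta] < 0$ is immediate, and $s_c'(z_i) \neq 0$ then follows from the chain rule together with the strict monotonicity of $V(0, \cdot)$ near $z_i$. Consequently $\Gamma_c$ is locally the zero set of the analytic function $F(x, y, z) := x^2 + y^2 - s_c(z)$, which is regular at $(0, 0, z_i)$ because $\nabla F(0, 0, z_i) = (0, 0, -s_c'(z_i)) \neq 0$; boundedness and closedness in $\RR^3$ then make $\Gamma_c$ a closed analytic surface.

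Part~\ref{itm:lebesgue-contours-iii}, with $c > V(0, 0)$, contains the main obstacle. From (i) one already has $z_1 = 0$, so the task is to show $r_c(z) \to 0$ tangentially as $z \to 0^+$. Set $\alpha^* := \arccos(V(0, 0)/c) \in (0, \pi/2)$. For every $\alpha < \alpha^*$ we have $\sec(\alpha) V(0, 0) < c$, so the sector estimate~\eqref{eq:V-sector-domination} rules out $(r_c(z), z) \in D_\alpha$ and hence forces $r_c(z) \leq z \cot(\alpha)$; taking $\alpha \uparrow \alpha^*$ gives the a priori bound $r_c(z) \leq z \cot(\alpha^*)$, which in particular implies $r_c(z) \to 0$. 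Upgrading this to $r_c(z)/z \to 0$ is the crux. If, along some subsequence, $r_c(z_n)/z_n \to \beta > 0$, then choosing any $\alpha \in (\arctan(1/\beta), \pi/2)$ places $(r_c(z_n), z_n) \in D_\alpha$ for all large $n$, and the continuity assertion in Lemma~\ref{lem:lebesgue-potential-basics}\ref{itm:V-continuous-0} would force $V(r_c(z_n), z_n) \to V(0, 0) < c$, contradicting $V(r_c(z_n), z_n) = c$. Hence $\beta = 0$, the approach is tangential, and $r_c(z)/z \to 0$. Analyticity of $\Gamma_c$ inside $D$ and at the pole $(0, 0, z_2)$ is handled as in part~\ref{itm:lebesgue-contours-ii}, while the identity $\overline{\Gamma_c} = \Gamma_c \cup \{(0, 0, 0)\}$ follows since, by the a priori bound just proved, the only point of the segment $S$ approached by points of $\Gamma_c$ is $(0, 0, 0)$.
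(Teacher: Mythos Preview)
Your proposal is correct and follows essentially the same route as the paper: monotonicity of $V(\cdot,z)$ and of $V(0,\cdot)$ to locate $z_1,z_2$ and define $r_c$, the implicit function theorem with $\nabla V\neq 0$ for analyticity, and Lemma~\ref{lem:lebesgue-potential-basics}\ref{itm:V-continuous-0} for the tangential cusp. Your treatment of the poles in~\ref{itm:lebesgue-contours-ii} via $\widetilde V(r^2,z)$ is in fact more explicit than the paper's, which simply applies the implicit function theorem to $V$ in Cartesian coordinates on $D\subset\RR^3$; two small imprecisions are that strict monotonicity alone does not give $\partial_r V\neq 0$ (though the direct computation does), and that the closure identity $\overline{\Gamma_c}=\Gamma_c\cup\{(0,0,0)\}$ really uses continuity and positivity of $r_c$ on $(0,z_2)$ rather than the a~priori bound.
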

\begin{proof}
Due to the monotonicity of $V(\cdot,z)$ for any fixed $z\in\RR$, there exists at most one $r\ge 0$ with $(r,z)\in D$ and $V(r,z)=c$; cf.~Lemma~\ref{lem:lebesgue-potential-basics}~\ref{itm:V-monotone}.
Observe that $V(0,\cdot)$ is strictly increasing in $z$ from $0$ to $V(0,0)$ on $(-\infty,0]$, and strictly decreasing from $\infty$ to $0$ on $(L,\infty)$.
So there exists a unique $z_2\in(L,\infty)$ such that $V(0,z_2)=c$.
In particular $L_c\ne\emptyset$.
Moreover, if $c\le V(0,0)$, there exists a unique $z_1\in (-\infty,0]$ such that $V(0,z_1)=c$.
Otherwise, if $c>V(0,0)$, there is no $(r,z)\in[0,\infty)\times(-\infty,0]$ with $V(r,z)=c$.
We then set $z_1=0$. Obviously in both cases $[0,L]\subset[z_1,z_2)$.

For $z\in(0,L]$ by Lemma~\ref{lem:lebesgue-potential-basics}~\ref{itm:V-monotone},
and otherwise for $z\notin[0,L]$ by the aforementioned monotonicity properties and continuity of $V$ in $(0,0,z)$ for $z\notin [0,L]$,
we deduce that for every $z\in(z_1,z_2)$ there exists a unique $r_c(z):=r>0$ with $V(r,z)=c$, and for every $z\in\RR\setminus[z_1,z_2]$ there is no $r\ge 0$ with $V(r,z)=c$. We set $r_c(z_1)=r_c(z_2)=0$ to obtain a function $r_c\colon[z_1,z_2]\to[0,\infty)$ that satisfies~\eqref{eq:contour-function}.

Next note that $\nabla V\neq 0$ everywhere in $D$. So by the implicit function theorem $\Gamma_c$ is an analytic surface in $D$ and the function $r_c$ is analytic and strictly positive on $(z_1,z_2)$.
Continuity of $r_c$ in $z_2$ follows from analyticity of $\Gamma_c$ at $(0,0,z_2)$, and analogously continuity in $z_1$ if $c<V(0,0)$. This also establishes~(ii).

We claim that $r_c$ is continuous in $z_1$ also for $c\ge V(0,0)$.
In fact, assume for contradiction that $\limsup_{w\to 0+} r_c(w)>0$.
Then there exists a $\beta>0$ and a strictly decreasing sequence $(w_n)_{n\in\NN}$ such that $\lim_{n\to\infty}w_n=0$ and $\frac{1}{\beta}\ge r_c(w_n)\ge\beta$ for all $n\in\NN$.
So after going to a subsequence we may suppose that $(r_c(w_n),w_n)$ converges to $(r_0,0)$ with $r_0\ge\beta$. But $V(r_0,0)<V(0,0)\le c$, which is a contradiction to continuity of $V$ on $D$ as $V(r_c(w_n),w_n)=c$ for all $n\in\NN$.

It remains to prove (iii). So let $c>V(0,0)$.
By Lemma~\ref{lem:lebesgue-potential-basics}~\eqref{eq:V-sector-domination} it is impossible for $\Gamma_c$ to stay in $D_\alpha$ for any $\alpha\in[0,\pi/2)$. Hence $(z,r_c(z))$ must approach $(0,0)$ tangentially along the $z$-axis as $z\to 0+$.
\end{proof}

\begin{cor}\label{cor:sing-dom}
Adopt the assumptions of Proposition~\ref{prop:lebesgue-contours}.
Let $A,B\in\RR$ with $0<A<V(0,0)<B$. Then
\begin{align*}
    \Omega := \{ (x,y,z)\in D: A<V(\sqrt{x^2+y^2},z)<B \}
\end{align*}
defines a bounded simply connected domain in $\RR^3$ whose boundary $\partial\Omega$ consists of the
two connected components $\Gamma_A$ and $\Gamma_B\cup\{(0,0,0)\}$ and
such that the boundary point $(0,0,0)$ is singular for the Dirichlet problem on $\Omega$.
\end{cor}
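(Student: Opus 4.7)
The plan has two independent parts: first, deduce the geometric structure of $\Omega$ from Proposition \ref{prop:lebesgue-contours}, then exhibit an explicit boundary datum whose variational solution is the potential $V$ itself and witnesses the singularity at the origin.

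For the geometry, boundedness of $\Omega$ is immediate from $V \to 0$ at infinity (Lemma \ref{lem:lebesgue-potential-basics}\ref{itm:V-monotone}), and openness follows from continuity of $V$ on $D$ together with $V < B$ forcing $\Omega$ to stay away from the singular segment $S$. For connectedness and simple connectedness I would pass to the meridional half-plane: by Proposition \ref{prop:lebesgue-contours} the cross-section of $\Omega$ is an open planar region whose outer boundary is the analytic closed curve $L_A$ and whose inner boundary $L_B \cup \{(0,0)\}$ is a simple closed curve with a cusp at the origin, hence a topological annulus. Revolving about the $z$-axis yields $\Omega$ homeomorphic to $S^2 \times (0,1)$, which is both path-connected and simply connected in $\RR^3$. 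That $\partial\Omega = \Gamma_A \cup \Gamma_B \cup \{(0,0,0)\}$ with the stated decomposition follows because $\Gamma_A$ is a compact analytic closed surface in $D$, $\clos{\Gamma_B} = \Gamma_B \cup \{(0,0,0)\}$ by Proposition \ref{prop:lebesgue-contours}\ref{itm:lebesgue-contours-iii}, and the two closed sets $\Gamma_A$ and $\clos{\Gamma_B}$ are disjoint since $A \neq V(0,0) \neq B$.

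For the singular point, define $\phi \in C(\partial\Omega)$ by $\phi \equiv A$ on $\Gamma_A$ and $\phi \equiv B$ on $\Gamma_B \cup \{(0,0,0)\}$; this is continuous because the two boundary components are disjoint closed sets. I claim that $u_\phi = \restrict{V}{\Omega}$. Indeed, $\restrict{V}{\Omega}$ is bounded (between $A$ and $B$ by the very definition of $\Omega$) and harmonic on $\Omega \subset D$; by continuity of $V$ on $D$ it attains $\phi$ at every point of $\Gamma_A \cup \Gamma_B$. Since a single point has capacity zero in $\RR^3$, the boundary values are attained quasi-everywhere, and then Vasilesco's Theorem \ref{thm:3.2} together with the identification of the Perron solution and the variational solution from Section \ref{sec:3} yields $u_\phi = \restrict{V}{\Omega}$.

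Finally, to see that the origin is singular, I would approach it along the negative $z$-axis. For $z < 0$ sufficiently close to $0$ the point $(0,0,z)$ lies in $\Omega$, since Lemma \ref{lem:lebesgue-potential-basics}\ref{itm:V-continuous-0} (with $\alpha = 0$) gives $V(0,z) \to V(0,0) \in (A,B)$ as $z \to 0-$. Hence $u_\phi(0,0,z) = V(0,z) \to V(0,0) < B = \phi(0,0,0)$, so $u_\phi$ fails to attain its boundary value at $(0,0,0)$ and this point is singular. The main obstacle in the plan is the identification $u_\phi = \restrict{V}{\Omega}$: it hinges on Vasilesco's characterization together with the crucial fact that a singleton in $\RR^3$ has capacity zero, an aspect that would fail if the origin were replaced by an irregular set of positive capacity.
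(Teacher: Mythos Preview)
Your proof is correct and follows essentially the same approach as the paper: identify $u_\phi = \restrict{V}{\Omega}$ via Vasilesco's theorem (Theorem~\ref{thm:3.2}) using that a singleton has capacity zero in $\RR^3$, and then observe that $V$ does not attain the boundary value $B$ at the origin. The only minor difference is that the paper approaches $z_0$ along the level sets $\Gamma_c$ for various $c\in[V(0,0),B)$ to show that $u_\phi$ is actually discontinuous at $z_0$, whereas you approach along the negative $z$-axis to obtain the single limit $V(0,0)\ne B$; either suffices to establish singularity.
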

\begin{proof}
In this proof we write $x$ for points in $\Omega\subset\RR^3$ and $z$ for points on $\partial\Omega$.
It remains to show that $z_0:=(0,0,0)$ is singular for $\Omega$.
Define $\phi\in C(\Gamma)$ by letting $\phi=A$ on $\Gamma_A$ and $\phi=B$ on $\Gamma_B\cup\{(0,0,0)\}$.
Let $u := \restrict{V}{\Omega}$. Then $u$ is harmonic and bounded on $\Omega$.
Moreover, $\lim_{x\to z} u(x)=\phi(z)$ for all $z\in\Gamma_A\cup\Gamma_B$.

It now suffices to observe that the singleton set $\{z_0\}$ has capacity $0$ in $\RR^3$, so that $u$ is equal to the Perron (or variational) solution $u_\phi$ by Theorem~\ref{thm:3.2}.
But approaching $z_0$ along the level sets $\Gamma_c$ for different levels $c$ with $V(0,0)\le c< B$ yields that $u=u_\phi$ is discontinuous in $z_0$.
\end{proof}

\begin{rem}
We use the notation from the above proof.
\begin{enumerate}[1.]
\item
It follows easily that exactly the values in $[V(0,0),B]$ are the possible limit points of $u_\phi(x)$ as $x\to z_0$.

\item
Regularity of $\phi$ on $\partial\Omega$ does not imply convergence of $u_\phi(x)$ as $x\to z_0$.
In the proof $\phi$ is constant on the two components of $\partial\Omega$.
So, in particular, $\phi=\restrict{\Phi}{\partial\Omega}$ for some $\Phi\in C^\infty_\cpt(\RR^d)$.

\item
In~\cite{Leb1913} the concepts of the Perron solution and singular points were not yet available.
So Lebesgue, in his setting, instead asserts that~\ref{eq:D} cannot have a classical solution $u_\phi$ on $\Omega$, since that one would need to be equal to the discontinuous function $u$.
This can be seen applying the strong maximum principle to
$x\mapsto u_\phi(x)-u(x)-\eps\frac{1}{\norm{x}_2}$ and $x\mapsto u(x)-u_\phi(x)-\eps\frac{1}{\norm{x}_2}$, which for all $\eps>0$ are harmonic functions on $\Omega$ with a negative limit superior towards all points in $\partial\Omega$.

\item
It follows from Wiener's criterion~\cite[p.~130]{Wie24}, see also Theorem~\ref{thm:wiener-crit} later on, that the cusp of $\Gamma_B$ at $z_0$ is very thin in the sense that there is only a vanishingly small amount of $\RR^3\setminus\Omega$ close to $z_0$ with respect to the Sobolev capacity. Let $r_B\colon [0,z_2]\to[0,\infty)$ be the contour function as in Proposition~\ref{prop:lebesgue-contours} of the surface of revolution $\Gamma_B$.
In the case when $r_B$ is monotonically increasing close to $0$, which for general $\rho$ cannot be expected, the singularity of $z_0$ can be characterised by the (eventual) convergence of the series $\sum_{j=j_0}^\infty \frac{1}{\log(r_B(q^j))}$ for one/every $q\in (0,1)$, see~\cite[p.~140]{Wie24} or~\cite[p.~287--288]{La72}.
In particular, for a domain in $\RR^3$ with an inward pointing cusp locally given as the surface of revolution about the $z$-axis of the monotonically increasing contour function $r\colon[0,\frac{1}{2}]\to[0,\infty)$ with $r(0)=0$, the tip $z_0$ will be singular for $r(z)=z^{-\log z}$ and regular for $r(z)=(-\log z)^{\log z}$, for example.
\end{enumerate}
\end{rem}

We give some estimates for the behaviour of the cusps of the level surfaces $\Gamma_c$ for $c>1$ towards $(0,0,0)$.
Our results rely on properties of the density function $\rho$.
We recall the notion of Dini continuity. Let $f\colon[a,b]\to\RR$ be continuous. Then the \emphdef{modulus of continuity} of $f$, which gives quantitative information about the uniform continuity of $f$, is the map $\omega_f\colon [0,\infty)\to[0,\infty)$ given by
\[
    \omega_f(t) = \sup\{\abs{f(x)-f(y)}: x,y\in [a,b]\text{ and }\abs{x-y}\le t\},
\]
and $f$ is called \emphdef{Dini continuous} if $\int_0^1\frac{\omega_f(t)}{t}\dx[t]<\infty$.
So if $\rho$ is Dini continuous locally at $0$, i.e.~on an interval $[0,b]$ with $b>0$, then in particular~\eqref{eq:lebesgue-potential-critical} is satisfied as $\rho(0)=0$.
Moreover, note that every Hölder continuous function is Dini continuous.
\begin{prop}\label{prop:cusp-rate}
Adopt the assumptions from Proposition~\ref{prop:lebesgue-contours}. Let $\alpha>0$.
\begin{alenum}
\item\label{en:cr-mon-est}
Suppose that $\rho$ is monotonically increasing locally at $0$. Let $\delta\in(0,1)$. Then $\lim_{z\to 0+}\rho(z)\log z = 0$ and
\begin{align*}
    \limsup_{z\to 0+} V\bigl(\exp\bigl(-\tfrac{\alpha}{\rho((1+\delta)z)}\bigr),z\bigr) &\le V(0,0)+2\alpha \\
        &\le\liminf_{z\to 0+} V\bigl(\exp\bigl(-\tfrac{\alpha}{\rho((1-\delta)z)}\bigr),z\bigr).
\end{align*}

\item\label{en:cr-dini-est}
Suppose that $\rho$ is Dini continuous locally at $0$.
Then  $\lim_{z\to 0+}\rho(z)\log z=0$ and
\[
    \lim_{z\to 0+} V(e^{-\alpha/\rho(z)}, z) = V(0,0)+2\alpha.
\]
\end{alenum}
\end{prop}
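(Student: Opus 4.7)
The plan is to decompose $V(r,z)$ from~\eqref{eq:lebesgue-potential} into a near part around $\zeta=z$ producing the $2\alpha$ contribution and a far part converging to $V(0,0)$, calibrated via the asymptotic $\operatorname{arcsinh}(\xi) = \log(2\xi) + o(1)$ as $\xi\to\infty$. A common preparatory step is to establish $\lim_{z\to 0+}\rho(z)\log z = 0$. For~\ref{en:cr-mon-est}, pick $b\in(0,L)$ on which $\rho$ is increasing; then for $0<z<b^2$ monotonicity gives
\[
    -\tfrac{1}{2}\rho(z)\log z = \rho(z)\log(\sqrt z/z) \le \int_z^{\sqrt z}\frac{\rho(\zeta)}{\zeta}\dx[\zeta],
\]
and the right-hand side vanishes as $z\to 0+$ by~\eqref{eq:lebesgue-potential-critical}. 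For~\ref{en:cr-dini-est}, the same trick applied to $\omega_\rho$ works, using $\rho(\zeta)\le\omega_\rho(\zeta)$ (since $\rho(0)=0$) and $\int_0^1\omega_\rho(s)/s\dx[s]<\infty$. As a consequence, whenever $r = \exp(-\alpha/\rho(cz))$ with $c\in\{1-\delta,1,1+\delta\}$, one has $z/r\to\infty$ and $\rho(cz)\log(z/r)\to\alpha$ as $z\to 0+$.

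For~\ref{en:cr-mon-est}, I would fix $\delta\in(0,1)$ and split $V(r,z) = N(r,z) + F(r,z)$ where $N$ is the integral over $((1-\delta)z,(1+\delta)z)$. For the upper bound put $r_+(z):=\exp(-\alpha/\rho((1+\delta)z))$; monotonicity bounds $\rho(\zeta)\le\rho((1+\delta)z)$ on the near interval, so
\[
    N(r_+(z),z) \le 2\rho((1+\delta)z)\operatorname{arcsinh}(\delta z/r_+(z)) \to 2\alpha
\]
by the $\operatorname{arcsinh}$ asymptotic and $\rho((1+\delta)z)\log z\to 0$. For $F$ I would dominate $\rho(\zeta)/\sqrt{(\zeta-z)^2+r^2}\le \rho(\zeta)/\abs{\zeta-z}\le (1+1/\delta)\rho(\zeta)/\zeta$ on the far range (a short case-check for $\zeta$ either below or above $z$) and apply dominated convergence to get $F(r_+(z),z)\to V(0,0)$. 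The lower bound is symmetric: take $r_-(z):=\exp(-\alpha/\rho((1-\delta)z))$, use $\rho(\zeta)\ge\rho((1-\delta)z)$ on the near interval and $1/\sqrt{(\zeta-z)^2+r^2}\ge 1/(\abs{\zeta-z}+r)$ on the far range. Combined with the strict monotonicity of $V(\cdot,z)$ in $r$ from Lemma~\ref{lem:lebesgue-potential-basics}, this yields the full chain of inequalities in~\ref{en:cr-mon-est}.

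For~\ref{en:cr-dini-est} the two-sided monotone envelope is unavailable and Dini continuity does the job directly. Fix small $\eta\in(0,L)$, write $V = I_1 + I_2$ with $I_1 = \int_0^{z+\eta}$, and in $I_1$ split $\rho(\zeta) = \rho(z) + (\rho(\zeta) - \rho(z))$. The leading piece is $\rho(z)[\operatorname{arcsinh}(\eta/r) + \operatorname{arcsinh}(z/r)]\to 2\alpha$ by the preparatory claim, while the remainder is bounded uniformly in $z$ and $r$ by
\[
    \int_{-z}^\eta\frac{\omega_\rho(\abs s)}{\sqrt{s^2+r^2}}\dx[s]\le 2\int_0^\eta\frac{\omega_\rho(s)}{s}\dx[s];
\]
meanwhile $I_2\to\int_\eta^L\rho(\zeta)/\zeta\dx[\zeta]$ by dominated convergence. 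Sending $\eta\to 0$ at the very end, the Dini error vanishes and the far integral tends to $V(0,0)$, giving the exact limit $V(0,0)+2\alpha$. The main obstacle throughout is the cancellation between the logarithmic divergence of $\operatorname{arcsinh}(z/r)$ and the vanishing of $\rho(z)$: only the auxiliary claim $\rho(z)\log z\to 0$ guarantees that $\rho(z)\operatorname{arcsinh}(z/r) = \alpha + o(1)$ rather than $\pm\infty$, and in~\ref{en:cr-dini-est} the $L^1$-integrability of $\omega_\rho(s)/s$ at the origin is precisely what renders the first-order Taylor error at $\zeta = z$ harmless.
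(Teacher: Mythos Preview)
Your argument is correct and follows the same architecture as the paper's proof: a near/far split of the integral~\eqref{eq:lebesgue-potential}, monotonicity (resp.\ Dini control) on the near part yielding the $2\alpha$, dominated convergence on the far part yielding $V(0,0)$, all hinging on the preliminary fact $\rho(z)\log z\to 0$.

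A few implementation details differ. Your proof of $\rho(z)\log z\to 0$ via $\rho(z)\int_z^{\sqrt z}\zeta^{-1}\dx[\zeta]\le\int_z^{\sqrt z}\rho(\zeta)\zeta^{-1}\dx[\zeta]$ is shorter than the paper's argument (which in~\ref{en:cr-mon-est} passes to $u(t)=\rho(e^{-t})$ and runs a dyadic contradiction, and in~\ref{en:cr-dini-est} splits $\int_z^1 t^{-1}\dx[t]$ at a carefully chosen $\eta$). In~\ref{en:cr-mon-est} the paper takes the near interval $[0,(1+\delta)z]$ rather than your symmetric $[(1-\delta)z,(1+\delta)z]$; both give the same domination $|\zeta-z|\ge\frac{\delta}{1+\delta}\zeta$ on the complementary range. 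In~\ref{en:cr-dini-est} the paper splits at the moving point $2z$ and takes a single limit $z\to 0^+$, whereas you split at the fixed point $z+\eta$ and send $\eta\to 0$ afterwards; the paper's one-pass version is marginally cleaner, while your version makes the role of the Dini error $2\int_0^\eta\omega_\rho(s)s^{-1}\dx[s]$ more transparent. Your appeal to the strict monotonicity of $V(\cdot,z)$ at the end of~\ref{en:cr-mon-est} is not actually needed for the stated chain of inequalities (the two bounds are established for different radii $r_\pm$ and simply concatenate), and your lower bound $1/\sqrt{(\zeta-z)^2+r^2}\ge 1/(|\zeta-z|+r)$ for the far part is redundant since the same dominated convergence that handled the upper bound already gives $F\to V(0,0)$; neither point affects correctness.
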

\begin{proof}
\ref{en:cr-mon-est}
For the first limit, it suffices to show that $\lim_{t\to\infty}t u(t)=0$ with $u(t)=\rho(e^{-t})$.
Note that there exists a $t_0>0$ such that $u$ is monotonically decreasing to $0$ on $[t_0,\infty)$.
By~\eqref{eq:lebesgue-potential-critical} one has
\[
    \int_{-\log L}^\infty u(t)\dx[t] = \int_0^L \frac{\rho(\zeta)}{\zeta}\dx[\zeta]<\infty.
\]
Suppose for contradiction that $\limsup_{t\to\infty}tu(t)\ge\gamma>0$. Then there exists a sequence $(t_n)_{n\in\NN}$ monotonically increasing to $\infty$ such that $t_n u(t_n)\ge\frac{\gamma}{2}$ and $t_{n}\ge 2 t_{n-1}$ for all $n\in\NN$. We obtain a contradiction by observing
\[
    \int_{-\log L}^\infty u(t)\dx[t] \ge \sum_{n=1}^\infty \int_{t_{n-1}}^{t_{n}}\frac{\gamma}{2 t_{n}} \ge\sum_{n=1}^\infty \frac{\gamma}{4}=\infty.
\]  

For the estimate on the left hand side, let $e(z)=\exp\bigl(-\alpha/\rho((1+\delta)z)\bigr)$. Suppose that $z\in(0,\frac{L}{2})$ is so small that $z^2+(e(z))^2\le\frac{1}{4}$ and that $\rho$ is monotonically increasing on $[0,2z]$.
We write
\begin{align*}
    V(e(z),z) &= \int_0^{(1+\delta)z}\frac{\rho(\zeta)}{\sqrt{(\zeta-z)^2+(e(z))^2}}\dx[\zeta] + \int_{(1+\delta)z}^{L}\frac{\rho(\zeta)}{\sqrt{(\zeta-z)^2+(e(z))^2}}\dx[\zeta] \\
              &= V_\low(z) + V_\high(z).
\end{align*}
Then for $\zeta\in [(1+\delta)z,L]$ one has $\abs{\zeta-z}=\zeta-z\ge\frac{\delta}{1+\delta}\zeta$, and therefore 
\[
   \frac{\rho(\zeta)}{\sqrt{(\zeta-z)^2+(e(z))^2}}\le \frac{\rho(\zeta)}{\zeta} \frac{1+\delta}{\delta}.
\]
As the function on the right hand side is integrable in $\zeta$ over $[0,L]$, we can apply dominated convergence and obtain
\[
    \lim_{z\to0+} V_\high(z) = \int_0^L\frac{\rho(\zeta)}{\zeta}\dx[\zeta] = V(0,0).
\]

For the first term, by monotonicity and explicit calculation it follows that
\begin{align*}
    V_\low(z)&\le \rho\p[\big]((1+\delta)z)\int_0^{(1+\delta)z}\frac{1}{\sqrt{(\zeta-z)^2+(e(z))^2}}\dx[\zeta] \\
        &= \rho\p[\big]((1+\delta)z)\p(\log(\sqrt{(\delta z)^2 + (e(z))^2}+\delta z) - \log(\sqrt{z^2+(e(z))^2}-z)) \\
        &= -\rho\p[\big]((1+\delta)z)\p(\log(\sqrt{z^2+(e(z))^2}-z)+\log(\sqrt{z^2+(e(z))^2}+z)) \\
        &\qquad {}+\rho\p[\big]((1+\delta)z)\p(\log(\sqrt{(\delta z)^2 + (e(z))^2}+\delta z) + \log(\sqrt{z^2+(e(z))^2}+z)) \\
        &\le -\rho\p[\big]((1+\delta)z)\log\p((e(z))^2) - 2\rho\p[\big]((1+\delta)z)\log(\delta z) \\
        &= 2\alpha - 2\rho\p[\big]((1+\delta)z)(\log((1+\delta)z) + \log\tfrac{\delta}{1+\delta})
\end{align*}
So as $\lim_{z\to 0+}\rho(z)\log z=0$, we obtain $\limsup_{z\to 0+} V_\low(z)\le 2\alpha$, which concludes the argument for the first estimate.

The estimate on the right hand side follows in an analogous way, where we instead let $e(z)=\exp\bigl(-\alpha/\rho((1-\delta)z)\bigr)$ and use that
\begin{align*}
    V_\low(z)&\ge \int_{(1-\delta)z}^{(1+\delta)z} \frac{\rho(\zeta)}{\sqrt{(\zeta-z)^2+(e(z))^2}}\dx[\zeta] \\
        &\ge\rho((1-\delta)z)\int_{(1-\delta)z}^{(1+\delta)z} \frac{1}{\sqrt{(\zeta-z)^2+(e(z))^2}} \\
        &=\rho((1-\delta)z)\p(\log(\sqrt{(\delta z)^2 + (e(z))^2}+\delta z) - \log(\sqrt{(\delta z)^2 + (e(z))^2}-\delta z)).
\end{align*}

\ref{en:cr-dini-est}
Assume that $\rho$ is Dini continuous on $[0,b]$ with $b\in (0,L]$. Let $\tilde{\rho}=\restrict{\rho}{[0,b]}$.

We first show that $\lim_{z\to0+}\rho(z)\log z=0$. Let $\eps>0$. Choose $\eta\in(0,\min\{b,1\}]$ small enough such that $\int_0^\eta\frac{\omega_{\tilde{\rho}}(t)}{t}\dx[t]<\eps$.
Suppose that $z\in (0,\eta]$.
Then
\begin{align*}
    \rho(z)\abs{\log z} \le\omega_{\tilde{\rho}}(z)\int_z^1\frac{1}{t}\dx[t] &\le \int_z^\eta\frac{\omega_{\tilde{\rho}}(t)}{t}\dx[t] + \omega_{\tilde{\rho}}(z)\int_\eta^1\frac{1}{t}\dx[t] \\
        &\le \int_0^\eta\frac{\omega_{\tilde{\rho}}(t)}{t}\dx[t] - \omega_{\tilde{\rho}}(z)\log\eta\le\eps - \omega_{\tilde{\rho}}(z)\log\eta,
\end{align*}
where we use that $\omega_{\tilde{\rho}}$ is monotonically increasing for the second inequality.
As $\lim_{z\to 0+}\omega_{\tilde{\rho}}(z)=0$ and $\eps>0$ was arbitrary, it follows that $\lim_{z\to0+}\rho(z)\log z=0$.

For the proof of the second limit, suppose that $z\in(0,\frac{b}{2}]$ and write
\begin{align*}
  V(r,z)&=\int_0^L\frac{\rho(\zeta)}{\sqrt{(\zeta-z)^2+r^2}}\dx[\zeta]\\
        &=\rho(z)\int_0^{2z}\frac{1}{\sqrt{(\zeta-z)^2+r^2}}\dx[\zeta]
          +\int_0^{2z}\frac{\rho(\zeta)-\rho(z)}{\sqrt{(\zeta-z)^2+r^2}}\dx[\zeta] \\
        &\qquad {}+ \int_{2z}^L \frac{\rho(\zeta)}{\sqrt{(\zeta-z)^2+r^2}}\dx[\zeta]\\
  &=V_1(r,z)+V_2(r,z) + V_3(r,z).
\end{align*}
Then
\begin{align*}
    V_2(r,z) 
        &=\int_0^z \frac{\rho(z-t)-\rho(z)}{\sqrt{t^2+r^2}}\dx[t] + \int_0^{z} \frac{\rho(z+t)-\rho(z)}{\sqrt{t^2+r^2}}\dx[t].
\end{align*}
Note that the terms inside of the two integrals are bounded by
\[
    \frac{\abs{\rho(z\pm t)-\rho(z)}}{\sqrt{t^2+r^2}} \leq\frac{\omega_{\tilde{\rho}}(t)}{t},
\]
which is integrable in $t$ in particular over $[0,\frac{b}{2}]$ by the Dini continuity of $\tilde{\rho}$.
So the dominated convergence theorem yields $\lim_{(r,z)\to(0,0)} V_2(r,z)=0$.

One can deal with $V_3$ in the same way as with $V_\high$ in the proof of~\ref{en:cr-mon-est}. It follows that
\[
    \lim_{(r,z)\to (0,0)} V_3(r,z)=\int_0^L \frac{\rho(\zeta)}{\zeta}\dx[\zeta]=V(0,0).
\]

Finally, we have the explicit description
\[
    V_1(r,z)=\rho(z)\log\left(\sqrt{z^2+r^2}+z\right)-\rho(z)\log\left(\sqrt{z^2+r^2}-z\right).
\]
Hence
\begin{align*}\MoveEqLeft
    \lim_{z\to 0+} V_1(e^{-\alpha/\rho(z)},z) = -\lim_{z\to 0+}\rho(z)\log(\sqrt{z^2+e^{-2\alpha/\rho(z)}} - z) \\
        &= -\lim_{z\to 0+}\rho(z)\p(\log(\sqrt{z^2+e^{-2\alpha/\rho(z)}} - z) + \log(\sqrt{z^2+e^{-2\alpha/\rho(z)}} + z))\\
        &= 2\alpha,
\end{align*}
where we use $\lim_{z\to 0+}\rho(z)\log z=0$ twice.
\end{proof}

The domains in Corollary~\ref{cor:sing-dom} all share the singular boundary point $(0,0,0)$ as the tip of an inward pointing cusp along the $z$-axis.
The boundary component with the cusp is given by the level surface $\Gamma_c\cup\{(0,0,0)\}$ for a $c>V(0,0)$.
It is natural to ask how the geometry of these cusps depends on the density function $\rho$.
The following result gives decay rates of the cusp's contour function in terms of $\rho$.
\begin{cor}\label{cor:cusp-rate}
Adopt the assumptions from Proposition~\ref{prop:lebesgue-contours}.
Let $c>V(0,0)$ and denote by $r_c\colon [0,z_2]\to[0,\infty)$ the contour function for $\Gamma_c$.
Let $0<\alpha<\frac{c-V(0,0)}{2}<\beta$.

\begin{alenum}
\item\label{en:cr-mon}
Suppose that $\rho$ is monotonically increasing locally at $0$. Let $\delta\in(0,1)$.
Then there exists a $z_0\in (0,L]$ such that
\[
    \exp\bigl(-\tfrac{\beta}{\rho((1-\delta)z)}\bigr)< r_c(z) < \exp\bigl(-\tfrac{\alpha}{\rho((1+\delta)z)}\bigr)
\]
for all $z\in (0,z_0]$.
\item\label{en:cr-dini}
Suppose that $\rho$ is Dini continuous locally at $0$.
Then there exists a $z_0\in(0,L]$ such that
\[
    e^{-\beta/\rho(z)} < r_c(z) < e^{-\alpha/\rho(z)}
\]
for all $z\in (0,z_0]$.
\end{alenum}
\end{cor}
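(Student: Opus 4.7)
The plan is to deduce Corollary~\ref{cor:cusp-rate} from Proposition~\ref{prop:cusp-rate} by inverting the asymptotic inequalities on $V$ to inequalities on $r$ via the strict monotonicity of $V(\cdot,z)$ from Lemma~\ref{lem:lebesgue-potential-basics}\ref{itm:V-monotone}. The key observation is that by definition $r_c(z)$ is the unique positive number with $V(r_c(z),z)=c$, so whenever $V(r,z)<c$ one necessarily has $r>r_c(z)$, and whenever $V(r,z)>c$ one has $r<r_c(z)$. The entire argument then reduces to comparing the limit values established in Proposition~\ref{prop:cusp-rate} with the level $c$.

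For part~\ref{en:cr-dini}, I would fix $\alpha$ and $\beta$ with $0<\alpha<(c-V(0,0))/2<\beta$. Proposition~\ref{prop:cusp-rate}\ref{en:cr-dini-est} gives $\lim_{z\to 0+}V(e^{-\alpha/\rho(z)},z)=V(0,0)+2\alpha<c$, so for all sufficiently small $z>0$ one has $V(e^{-\alpha/\rho(z)},z)<c=V(r_c(z),z)$, and monotonicity of $V(\cdot,z)$ forces $r_c(z)<e^{-\alpha/\rho(z)}$. Symmetrically $\lim_{z\to 0+}V(e^{-\beta/\rho(z)},z)=V(0,0)+2\beta>c$ yields $r_c(z)>e^{-\beta/\rho(z)}$ eventually, and taking the smaller of the two resulting thresholds on $z$ produces the required $z_0$.

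For part~\ref{en:cr-mon}, the same strategy applies, but using the one-sided bounds from Proposition~\ref{prop:cusp-rate}\ref{en:cr-mon-est} in place of the limits. From $\limsup_{z\to 0+}V\bigl(e^{-\alpha/\rho((1+\delta)z)},z\bigr)\le V(0,0)+2\alpha<c$ one concludes that $V\bigl(e^{-\alpha/\rho((1+\delta)z)},z\bigr)<c$ for all small $z$, so $r_c(z)<\exp\bigl(-\alpha/\rho((1+\delta)z)\bigr)$; the liminf estimate for $\beta$ supplies the lower bound in the same fashion.

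There is no substantial obstacle here since all the analytic work is contained in Proposition~\ref{prop:cusp-rate}. The only thing to keep in mind is the direction-swap caused by the fact that $V(\cdot,z)$ is \emph{decreasing}, so that inequalities on $V$ invert when passed to inequalities on $r$; and the hypothesis $0<\alpha<(c-V(0,0))/2<\beta$ is precisely what is needed so that the limit values $V(0,0)+2\alpha$ and $V(0,0)+2\beta$ strictly straddle $c$, guaranteeing that the strict inequalities persist on a one-sided neighbourhood of $0$.
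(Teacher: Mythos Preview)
Your proposal is correct and follows essentially the same approach as the paper: the paper argues by contradiction via a sequence $(z_n)$ violating the upper bound and then uses the monotonicity of $V(\cdot,z)$ together with the limit from Proposition~\ref{prop:cusp-rate} to reach $c\le V(0,0)+2\alpha$, while you phrase the identical idea directly. The content is the same.
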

\begin{proof}
We give the argument for the upper estimate in~\ref{en:cr-dini}. The other estimates follow in an analogous way.

Suppose for contradiction that there exists a sequence $(z_n)_{n\in\NN}$ in $(0,L]$ with $z_n\to 0$ and such that $r_c(z_n)\ge e^{-\alpha/\rho(z_n)}$ for all $n\in\NN$.
Then, as $r\mapsto V(r,z)$ is monotonically decreasing,
\[
    c = V(r_c(z_n),z_n)\le V(e^{-\alpha/\rho(z_n)},z_n)\to V(0,0)+2\alpha
\]
for $n\to\infty$ by Proposition~\ref{prop:cusp-rate}~\ref{en:cr-dini-est}, which is a contradiction as $V(0,0)+2\alpha<c$.
\end{proof}

For the remainder of the section we specialise to Lebesgue's case $L=1$ and $\rho(z)=z$ for $z\in[0,1]$.
Then it is easy to compute $V$ explicitly, namely
\begin{equation}
  \label{eq:lebesgue-potential-formula}
  \begin{aligned}
    V(r,z)
    &=\int_0^1\frac{\zeta}{\sqrt{(\zeta-z)^2+r^2}}\dx[\zeta]\\
    &=z\int_0^1\frac{1}{\sqrt{(\zeta-z)^2+r^2}}\dx[\zeta]+\int_0^1\frac{\zeta-z}{\sqrt{(\zeta-z)^2+r^2}}\dx[\zeta]\\
    &=
      \begin{multlined}[t]
        z\log\mleft(\sqrt{(1-z)^{2}+r^2}+1-z\mright)-z\log\mleft(\sqrt{r^{2} + z^{2}}-z\mright)\\
        + \sqrt{(1-z)^2 + r^{2}} - \sqrt{r^{2} + z^{2}}.
      \end{multlined}
  \end{aligned}
\end{equation}
Figure~\ref{fig:lebesgue-graph} shows the graph of $V$ as a function of $r>0$ and $z\in\RR$. One can follow the level curve $V(r,z)=c>1$ to get the limit $c>1$ at $(0,0)$. Figure~\ref{fig:lebesgue-contour} shows cross sections along the $xz$-plane of $\Gamma_c$. The level sets $\Gamma_{\frac{1}{2}}$ and $\Gamma_2$ bounding \emphdef{Lebesgue's domain} given by
\begin{displaymath}
  \Omega:=\mleft\{(x,y,z)\in D\setcolon \frac{1}{2}<V\mleft(\sqrt{x^2+y^2},z\mright)<2\mright\}
\end{displaymath}
are highlighted in red. The rod is shown in blue. Figure~\ref{fig:lebesgue-domain} shows the domain cut open, with the rod carrying the mass distribution shown in red. It is interesting to note that later authors such as~\cite[page~6]{Kel41} only use the part
\begin{displaymath}
  \widetilde{V}(r,z)=z\log\mleft(\sqrt{z^{2}+r^2}-z\mright)+\sqrt{z^2 + r^{2}}
\end{displaymath}
of~\eqref{eq:lebesgue-potential-formula}, which is harmonic in $\RR^3\setminus\{(0,0,z)\setcolon z\ge 0\}$, in order to give an example of a domain with a singular point attributed to an unidentified paper of Lebesgue, but without any reference of its physical significance.
\begin{figure}[ht]
  \centering
  \begin{tikzpicture}[background rectangle/.style={fill=white},show background rectangle,inner frame sep=0pt]
    \node[anchor=south west,inner sep=0] (cliff) at (0,0) %
    {\includegraphics[width=0.37\textwidth]{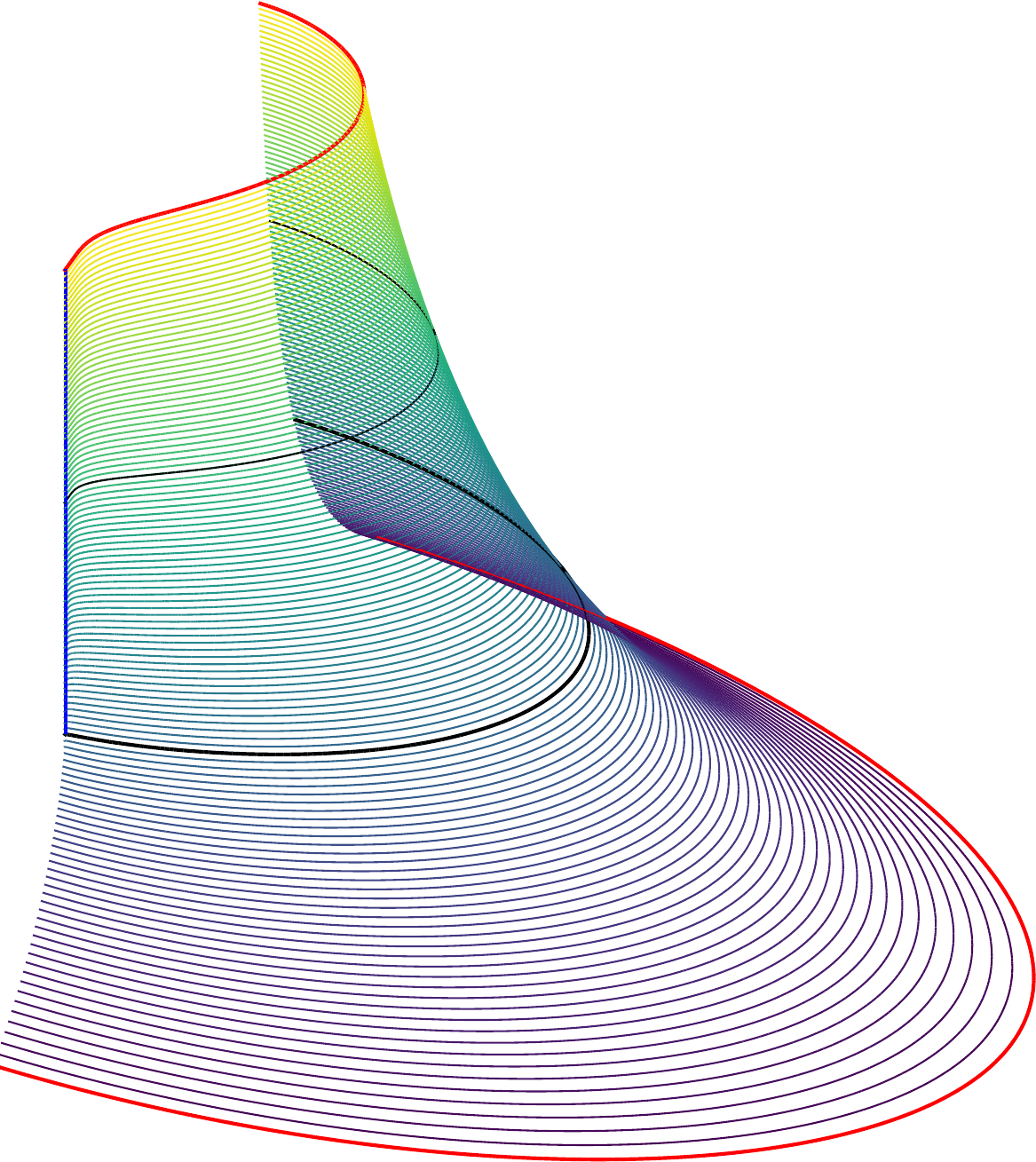}};%
    \begin{scope}[x={(cliff.south east)},y={(cliff.north west)}]
      \coordinate (r) at (-15.0:0.635);
      \coordinate (z) at (51.0:0.584);
      \coordinate (sing) at (0.061,0.368);
      \coordinate (c-unit) at (0,0.397);
      \coordinate (origin) at ($(sing)-(c-unit)$);
      \coordinate (dash) at (-0.02,0);
      % c axis (hidden part in gray)
      \draw ($(origin)-0.01*(c-unit)$) -- ($(origin)+0.24*(c-unit)$);
      \draw[gray] ($(origin)+0.25*(c-unit)$) -- ($(origin)+(c-unit)$);
      \draw[->] ($(origin)+2*(c-unit)$) -- ($(origin)+2.35*(c-unit)$) node[right] {$c$};%
      % highlight singularity
      \draw[thick,blue] (sing) -- ++(c-unit);
      % label c values
      \draw ($(sing)+(c-unit)$) -- ++(dash) node[left] {$2$};%
      \draw ($(sing)+0.5*(c-unit)$) -- ++(dash) node[left] {$\frac{3}{2}$};%
      \draw (sing) -- ++(dash) node[left] {$1$};%
      \draw[gray] ($(sing)-0.5*(c-unit)$) -- ++(dash) node[left,black] {$\tfrac{1}{2}$};%
      \node[left] at (origin) {$0$};
      % r and z axes (hidden part in gray)
      \draw ($(origin)-0.3*(z)$) -- ($(origin)+0.175*(z)$);
      \draw[gray] ($(origin)+0.185*(z)$) -- ($(origin)+1.25*(z)$);
      \draw[->] ($(origin)+1.25*(z)$) -- ($(origin)+1.5*(z)$) node[right] {$z$};
      \draw[->] ($(origin)-0.03*(r)$) -- ++($1.2*(r)$) node[right] {$r$};
    \end{scope}
  \end{tikzpicture}
  \caption{Graph of $V(r,z)$ between the level sets $c=\frac{1}{2}$ and $c=2$. One clearly sees the singularity at $(r,z)=(0,0)$.}
  \label{fig:lebesgue-graph}
\end{figure}
\begin{figure}[ht]
  \centering
  \begin{tikzpicture}[background rectangle/.style={fill=white},show background rectangle,inner frame sep=0pt]
    \node[anchor=south west,inner sep=0] (contour) at (0,0) %
    {\includegraphics[width=0.51\textwidth]{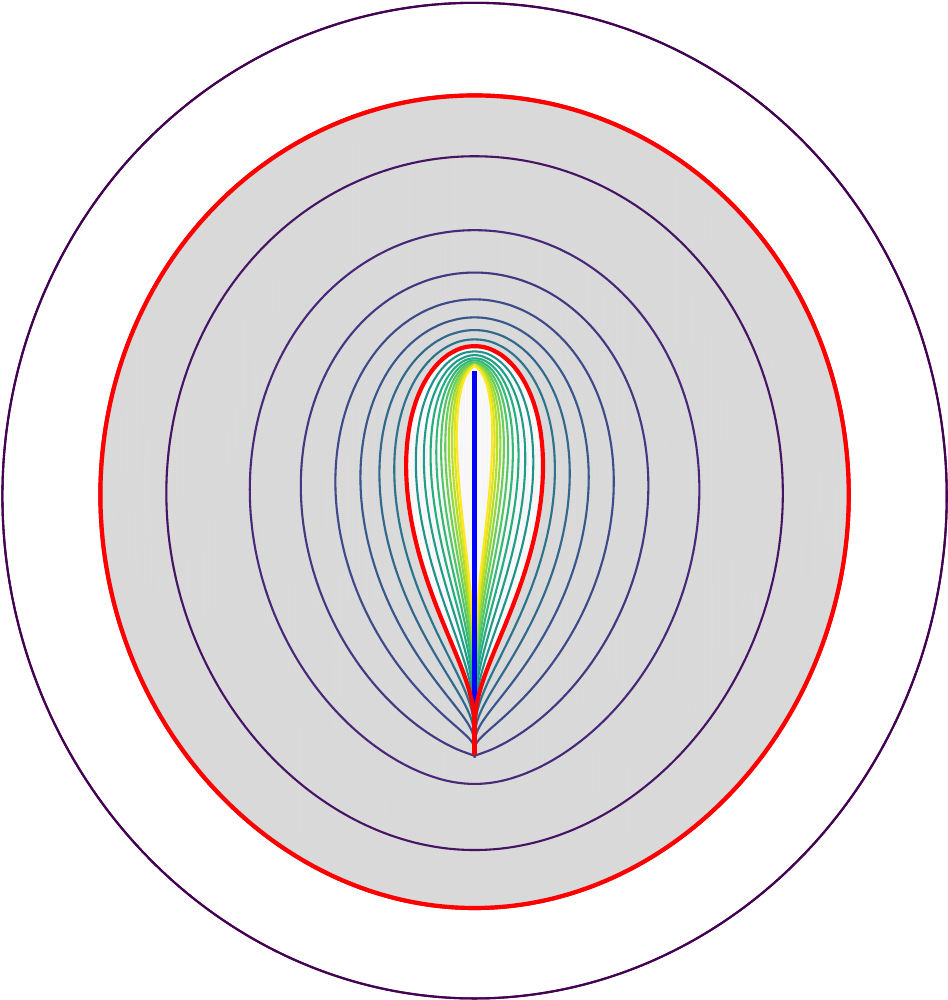}};%
    \begin{scope}[x={(contour.south east)},y={(contour.north west)}]
      \draw[->] (-0.05,0.244) -- (1.1,0.244) node[below] {$x$};%
      \draw[->] (0.501,-0.05) -- (0.501,1.1) node[right] {$z$};%
          \draw[blue,thick] (0.501,0.244) -- (0.501,0.63);%
      \node at (0.9,0.7) {$\Gamma_{\frac{1}{2}}$};
      \node[fill=black!15,circle,inner sep=0.25pt] at (0.61,0.6) {$\Gamma_{2}$};
      \node[fill=black!15,circle,inner sep=0.25pt] at (0.78,0.5) {$\Omega$};
    \end{scope}
  \end{tikzpicture}
  \caption{Contour map in the $xz$-plane showing $\Gamma_{\frac{1}{2}}$ and $\Gamma_2$ bounding Lebesgue's domain.}
  \label{fig:lebesgue-contour}
\end{figure}

\begin{figure}[ht]
  \centering
  \includegraphics[width=0.5\textwidth]{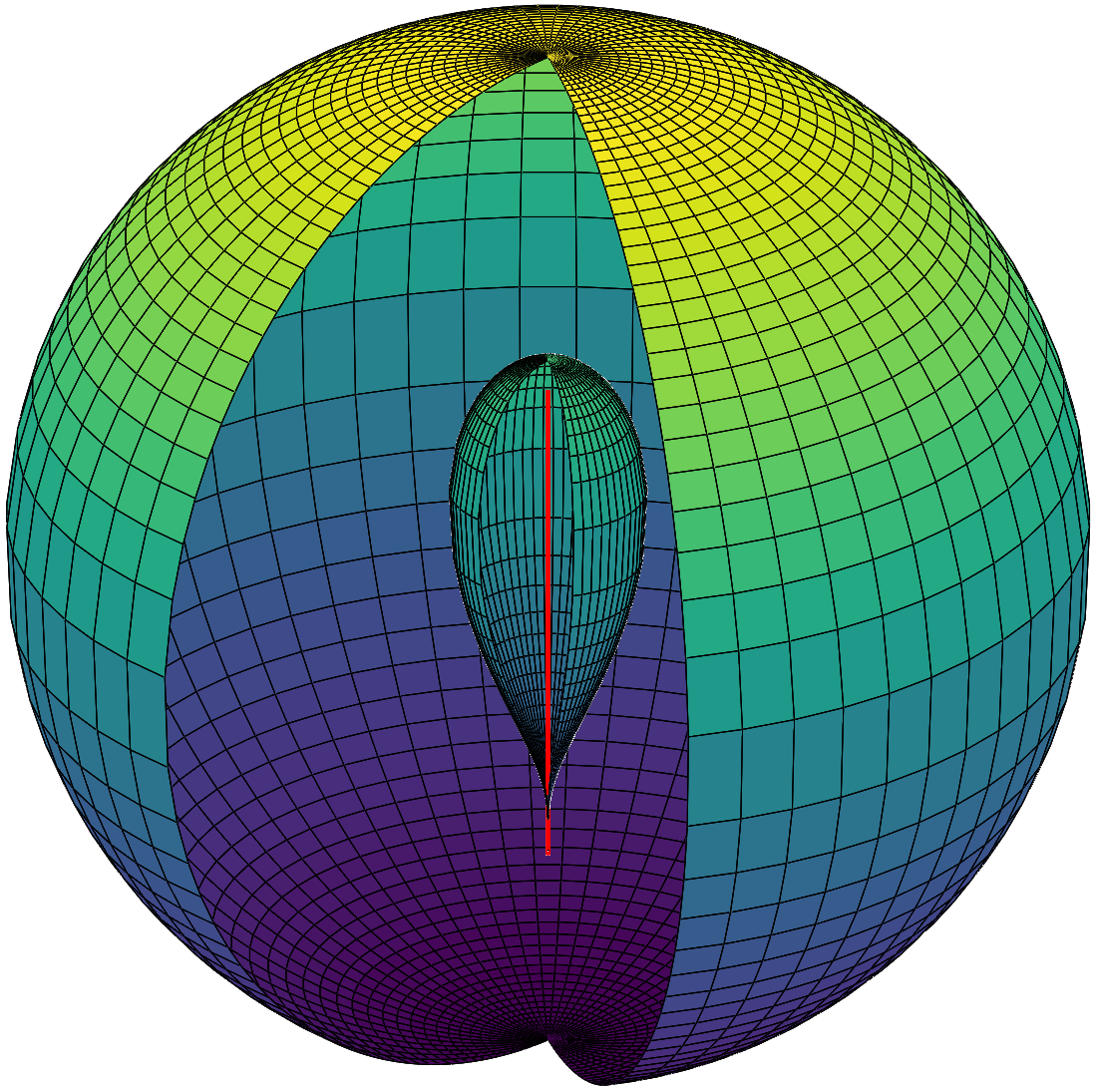}
  \caption{Lebesgue's domain cut open along its outer boundary $\Gamma_{\frac{1}{2}}$ to reveal the cusp of the inner boundary $\Gamma_{2}$.}
  \label{fig:lebesgue-domain}
\end{figure}

It follows easily from Corollary~\ref{cor:cusp-rate}~\ref{en:cr-dini} that the level sets $\Gamma_c$ for $c>1$ have exponential cusps towards $(0,0,0)$.
In fact, fix $c>1$ and let $r_c\colon[0,z_2]\to[0,\infty)$ be the corresponding contour function as in Proposition~\ref{prop:lebesgue-contours}.
Then for all $\alpha,\beta\in\RR$ with $0<\alpha < \frac{c-1}{2}<\beta$ there exists a $z_0\in (0,1]$ such that
\[
   e^{-\beta/z}<r_c(z)< e^{-\alpha/z}
\]
for all $z\in(0,z_0]$.
So specifically for Lebesgue's domain with $c=2$, we obtain that the cusp at $(0,0,0)$ is eventually thinner than $e^{-\alpha/z}$ as $z\to 0+$ for every $\alpha\in (0,\frac{1}{2})$,
and eventually thicker than $e^{-\alpha/z}$ as $z\to 0+$ for every $\alpha>\frac{1}{2}$. We point out that consequently also Lebesgue's irregularity condition~\cite[p.~352]{Leb24} for a singular point at $(0,0,0)$ is clearly satisfied. Note that a merely polynomial decay of the contour function as $z\to 0+$ would imply regularity by~\cite[point~c.~on p.~352]{Leb24}.

The presence of a singular boundary point introduces some surprisingly strong non-local behaviour.
We will illustrate this for the example where $\Omega$ is Lebesgue's domain.
We again go back to writing $x$ for points in $\Omega$ and $z$ for points on $\partial\Omega$.
Recall that the boundary $\partial\Omega$ of $\Omega$ is the disjoint union of the two closed sets $\Gamma_{\frac{1}{2}}$ and $\Gamma_2\cup\{z_0\}$ with the singular point $z_0=(0,0,0)$.

If $\phi=\frac{1}{2}$ on $\Gamma_{\frac{1}{2}}$ and $\phi=2$ on $\Gamma_2\cup\{z_0\}$, then we know that $\lim_{x\to z_0} u_\phi(x)$ does not exist.
In fact, for each $c\in[1,2]$ there exists a sequence $(x_n)_{n\in\NN}$ in $\Omega$ such that $\lim_{n\to\infty}u_\phi(x_n)=c$.
Using our variational description of $u_\phi$, we now show the following.
Let $\alpha,\beta\in\RR$, $\alpha\ne\beta$.
Define $\psi\in C(\partial\Omega)$ by $\psi(z)=\alpha$ if $z\in \Gamma_{\frac{1}{2}}$ and $\psi(z)=\beta$ if $z\in\Gamma_2$.
Then $\lim_{x\to z_0} u_\psi(x)$ does not exist.
But $u_\psi\in H^1(\Omega)$.
\begin{proof}
Let $\eta\in C^\infty_\cpt(\RR^d)$ be such that $0\le\eta\le 1$, $\eta=0$ in a neighbourhood of $\Gamma_{\frac{1}{2}}$ and $\eta=1$ in a neighbourhood of $\Gamma_2$.
Let $\Phi_0=2\eta + \frac{1}{2}(1-\eta)$, $v_0\in H^1_0(\Omega)$ be such that $\Delta v_0=\Delta\Phi_0 = \frac{3}{2}\Delta\eta$ and $u_\phi = \Phi_0-v_0$.
Let $\Phi=\beta\eta + \alpha(1-\eta)$, $v\in H^1_0(\Omega)$ be such that $\Delta v=\Delta\Phi$.
Then $u_\psi = \Phi-v$.
Since $\Delta\Phi = (\beta-\alpha)\Delta\eta = \frac{2}{3}(\beta-\alpha)\Delta\Phi_0$, it follows that $v=\frac{2}{3}(\beta-\alpha)v_0$.
Hence $u_\psi=\Phi-\frac{2}{3}(\beta-\alpha)v_0$.
As $v_0(x)=\Phi_0(x)-u_\phi(x)$ does not converge as $x\to z_0$, the same is true for $u_\psi(x)$.
Since $\Phi\in C(\clos{\Omega})\cap H^1(\Omega)$, it follows that $u_\psi\in H^1(\Omega)$.
\end{proof}

The example shows that continuity at $z_0$ is a non-local property with respect to the boundary data.
Suppose that $\varphi\in C(\partial\Omega)$ admits a classical solution $u_\varphi$ on Lebesgue's domain $\Omega$. Then for $\psi=\eps\one_{\Gamma_{\frac{1}{2}}}$, one has that
\begin{displaymath}
  \lim_{x\to z_0}u_{\varphi+\psi}(x)=u_\phi(z_0) +\lim_{x\to z_0}u_\psi(x)
\end{displaymath}
does not exist and therefore $u_{\phi+\psi}\notin C(\clos{\Omega})$ whenever $\eps\ne 0$.
Note that $z_0$ is far away from $\Gamma_{\frac{1}{2}}$ and that the perturbation $\psi$ for the boundary data satisfies $\norm{\psi}_\infty\le\varepsilon$. In particular this shows that the set of $\phi\in C(\partial\Omega)$ such that~\ref{eq:D} has a classical solution has empty interior. By the maximum principle that set is also closed, showing that it is a meagre set in $C(\partial\Omega)$. In particular, the boundary data where the solution is discontinuous at $z_0$ is generic. We will show in the next section that in the presence of a singular point this is always the case, and that an even stronger non-local property holds.

\section{Non-locality}
\label{sec:5}
Our aim in this section is to prove the following result.
\begin{thm}\label{thm:5.1}
Suppose that $\Omega$ is connected.
Let $z_0\in \partial\Omega$ be a singular point.
Let $0\le\phi\in C(\partial\Omega)$ be such that $\phi(z_0)=0$ and such that $\phi(w)>0$ at a regular point $w\in \partial\Omega$. Then $u_\phi(x)$ does not converge to $0$ as $x\to z_0$.
\end{thm}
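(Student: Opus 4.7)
The plan is to argue by contradiction. Assume that $u_\phi(x)\to 0$ as $x\to z_0$; I will deduce that $z_0$ must then be a regular boundary point of $\Omega$, contradicting the hypothesis.

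The first step is to establish that $u_\phi$ is strictly positive throughout $\Omega$. From $\phi\ge 0$ the weak maximum principle (Theorem~\ref{thm:2.4}) gives $u_\phi\ge 0$ on $\Omega$. Since $w$ is a regular boundary point with $\phi(w)>0$, one has $u_\phi(x)\to\phi(w)>0$ as $x\to w$, so $u_\phi$ is not identically zero. Because $\Omega$ is connected and $u_\phi$ is harmonic and non-negative, the strong maximum principle then forces $u_\phi(x)>0$ for every $x\in\Omega$.

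Combined with the contradiction hypothesis, $u_\phi$ is now a positive harmonic (in particular, superharmonic) function on $\Omega$ with $\lim_{x\to z_0}u_\phi(x)=0$. Restricted to any neighbourhood $\Omega\cap U$ of $z_0$, this is exactly the defining property of a local Bouligand barrier. Since the variational solution has been identified with the Perron solution in Section~\ref{sec:3}, the classical barrier characterisation of regular boundary points implies that $z_0$ is regular for the Dirichlet problem, contradicting the hypothesis and completing the proof.

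The main obstacle is the barrier-implies-regularity step, i.e.\ the classical Bouligand theorem of potential theory, which is not proved in the paper and would need to be invoked. If instead one prefers a proof internal to the variational framework developed here, one can reprove this implication by hand: given $\psi\in C(\partial\Omega)$ with $\psi(z_0)=0$ and $\varepsilon>0$, one picks $\delta>0$ so that $|\psi|\le\varepsilon$ on $\partial\Omega\cap\bar B_\delta(z_0)$ and applies a localised version of Theorem~\ref{thm:2.4} to comparison functions of the form $\varepsilon+M u_\phi$ on the subdomain $\Omega\cap B_\delta(z_0)$, aiming for $\limsup_{x\to z_0}|u_\psi(x)|\le\varepsilon$. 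The positivity of $u_\phi$ from the first step is precisely what permits the multiplier $M$ to be chosen large enough; the delicate point in this concrete route is controlling the behaviour of $u_\phi$ on the interior cross-section $\Omega\cap\partial B_\delta(z_0)$, where $u_\phi$ is not a priori uniformly bounded below, and this is where I would expect the genuine technical work to concentrate.
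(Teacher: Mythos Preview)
Your argument is correct: under the contradiction hypothesis, $u_\phi$ is a positive harmonic function on $\Omega$ with limit $0$ at $z_0$, and Bouligand's theorem (a positive superharmonic function on $\Omega$ tending to $0$ at a boundary point forces that point to be regular) immediately gives the contradiction. One small remark: there is no need to restrict to a neighbourhood $\Omega\cap U$; Bouligand's criterion is stated for a barrier on all of $\Omega$, and $u_\phi$ already qualifies.

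The paper takes a genuinely different route that avoids citing Bouligand. Its Proposition~5.4 only supplies the easier implication ``global $H^1$-barrier $\Rightarrow$ regular'', where the barrier is required to lie in $C(\clos{\Omega})\cap H^1(\Omega)$ and be strictly positive on all of $\clos{\Omega}\setminus\{z_0\}$. Your $u_\phi$ does not meet this on $\Omega$: it need not be in $H^1(\Omega)$, and more importantly it is not known to be bounded away from zero near $\partial\Omega\setminus\{z_0\}$---exactly the difficulty you flag in your last paragraph. The paper resolves both issues by (i) replacing $\phi$ by a smooth $0\le\phi_0\le\phi$ with $\phi_0(w)>0$, so that $u_{\phi_0}\in H^1(\Omega)$, and (ii) proving Theorem~5.2: via Wiener's criterion one constructs an open set $\Omega_0$ with $\clos{\Omega_0}\subset\Omega\cup\{z_0\}$ for which $z_0$ is still singular. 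Since $\clos{\Omega_0}\setminus\{z_0\}$ is compact in $\Omega$, the restriction $b:=u_{\phi_0}|_{\Omega_0}$ extends continuously and strictly positively to $\clos{\Omega_0}\setminus\{z_0\}$, so $b$ is a global $H^1$-barrier for $\Omega_0$ and Proposition~5.4 applies. In effect, the shrinking to $\Omega_0$ is the paper's substitute for Bouligand: it converts your ``weak'' barrier into a ``strong'' one on a smaller domain. Your approach is shorter and conceptually cleaner but outsources the work to a classical nontrivial theorem; the paper's approach stays inside its variational framework at the cost of the Wiener-criterion construction in Theorem~5.2.
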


In order to illustrate the statement of Theorem~\ref{thm:5.1}, let $\Omega$ be once again Lebesgue's domain. For $\phi_0\equiv 0$, one has $u_{\phi_0}\equiv 0$, of course.
If we obtain $\phi$ by perturbing $\phi_0$ by a small positive bump somewhere away from $z_0$, regardless whether supported on $\Gamma_{\frac{1}{2}}$ or $\Gamma_2$, then $u_\phi(x)$ does no longer converge to $0$ as $x\to z_0$. This sharpens the example at the end of Section~\ref{sec:4} and shows that continuity of $u_\phi$ at $z_0$ with the proper value $\phi(z_0)$ is an unstable and also entirely non-local property.

As a simple corollary we obtain that in the presence of a singular boundary point it is generic for $\phi\in C(\partial\Omega)$, in the sense of the Baire category theorem, to not admit a classical solution.
\begin{cor}\label{cor:baire}
Suppose that $\Omega$ is not Dirichlet regular. Let
\[
    \mathcal{G} := \{\phi\in C(\partial\Omega) \setcolon \text{$u_\phi$ is classical solution of~\ref{eq:D}}\}.
\]
Then $\mathcal{G}$ is meagre in $C(\partial\Omega)$ and
therefore $C(\partial\Omega)\setminus\mathcal{G}$ is of second Baire category.
\end{cor}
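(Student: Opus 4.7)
The plan is to show that $\mathcal{G}$ is simultaneously closed and has empty interior in $C(\partial\Omega)$; a closed set with empty interior is nowhere dense, hence meagre, and since $C(\partial\Omega)$ is a complete metric space the Baire category theorem then yields that $C(\partial\Omega)\setminus\mathcal{G}$ is of second Baire category.

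Closedness of $\mathcal{G}$ should be the easy step. The map $\phi\mapsto u_\phi$ is linear and, by the maximum principle in Theorem~\ref{thm:2.4}, contractive from $C(\partial\Omega)$ into $C_\bdd(\Omega)$. Hence if $\phi_n\to\phi$ uniformly on $\partial\Omega$ with each $u_{\phi_n}\in\mathcal{G}$, the sequence $u_{\phi_n}$ is uniformly Cauchy on $\Omega$; since the $u_{\phi_n}$ already extend continuously to $\clos{\Omega}$ with boundary values $\phi_n$, their extensions are Cauchy in $C(\clos{\Omega})$. The common limit is an element of $C(\clos{\Omega})$ that agrees with $u_\phi$ on $\Omega$ and with $\phi$ on $\partial\Omega$, showing $\phi\in\mathcal{G}$.

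The heart of the matter is showing that $\mathcal{G}$ has empty interior, and here I will use Theorem~\ref{thm:5.1} to produce arbitrarily small perturbations that destroy classical solvability. Since $\Omega$ is not Dirichlet regular, there exists a singular point $z_0\in\partial\Omega$. Kellogg's theorem ensures that the full set of singular points has Sobolev capacity zero, so regular points are abundant and I may fix one, say $w\in\partial\Omega$, which is necessarily distinct from $z_0$. Given $\phi\in\mathcal{G}$ and $\varepsilon>0$, Urysohn's lemma on the metric space $\partial\Omega$ supplies $\chi\in C(\partial\Omega)$ with $0\le\chi\le\varepsilon$, $\chi(z_0)=0$ and $\chi(w)>0$. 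Theorem~\ref{thm:5.1} then yields that $u_\chi(x)$ fails to tend to $0$ as $x\to z_0$. Setting $\psi:=\phi+\chi$ one has $\norm{\psi-\phi}_\infty\le\varepsilon$ and, by linearity of the solution map, $u_\psi=u_\phi+u_\chi$; since $u_\phi(x)\to\phi(z_0)$ by the assumed classical solvability for $\phi$, $u_\psi(x)$ cannot converge as $x\to z_0$, so $\psi\notin\mathcal{G}$. As $\varepsilon>0$ was arbitrary, $\phi$ is not interior to $\mathcal{G}$.

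The only point that deserves brief justification is the existence of a regular point $w$, but this is immediate from Kellogg's theorem: the boundary of a bounded open set in $\RR^d$ with $d\ge 2$ is not of Sobolev capacity zero, so its singular subset, being of capacity zero, cannot exhaust it. Assembling the pieces, $\mathcal{G}$ is closed with empty interior, therefore nowhere dense and a fortiori meagre, and the second-category conclusion for $C(\partial\Omega)\setminus\mathcal{G}$ follows by Baire's theorem.
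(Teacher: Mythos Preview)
Your proof is correct and follows essentially the same route as the paper: closedness via the maximum principle, empty interior via Theorem~\ref{thm:5.1} applied to a small nonnegative bump supported away from $z_0$, and the Baire conclusion. The paper builds the bump from a smooth cutoff $\eta\in C_\cpt^\infty(\RR^d)$ rather than via Urysohn, but that is cosmetic.

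One small slip: from $u_\phi(x)\to\phi(z_0)$ and $u_\chi(x)\not\to 0$ you conclude that $u_\psi(x)$ \emph{cannot converge} as $x\to z_0$. That does not follow; $u_\chi(x)$ might converge to some $c\neq 0$ (this is exactly the semiregular case discussed in the remark following the corollary). What you can conclude is that $u_\psi(x)\not\to\psi(z_0)=\phi(z_0)$, which is all you need for $\psi\notin\mathcal{G}$. With that wording fixed, the argument is complete.
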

\begin{proof}
By the maximum principle $\mathcal{G}$ is closed in $C(\partial\Omega)$.
Let $z_0\in\partial\Omega$ be singular.
By Kellogg's theorem and since $H^1(\Omega)\ne H^1_0(\Omega)$, there also exists a regular point $w\in \partial\Omega$.
Let $\delta = \frac{1}{2}\abs{w-z_0}$ and choose an $\eta\in C^\infty_\cpt(\RR^d)$ with $0\le\eta\le 1$, $\operatorname{supp}\eta\subset B(w,\delta)$ and $\eta(w)=1$.

Let $\phi_0\in\mathcal{G}$ and $\eps>0$. Consider $\psi = \phi_0 + \eps\restrict{\eta}{\partial\Omega}$.
Theorem~\ref{thm:5.1} and linearity of $\phi\mapsto u_\phi$ imply that $\psi\notin\mathcal{G}$. This shows that $\phi_0$ is not an inner point of $\mathcal{G}$ in $C(\partial\Omega)$.
So $\mathcal{G}$ is meagre and the remaining claim follows as $C(\partial\Omega)$ is a Baire space.
\end{proof}

\begin{rem}
It is possible to sharpen the assertions of Theorem~\ref{thm:5.1} and Corollary~\ref{cor:baire}.
In fact, singular points can be further distinguished into the semiregular and the strongly irregular singular points, see~\cite{LM81} and~\cite{BH22}. More precisely,
suppose that $\Omega$ is a bounded open domain with a singular boundary point $z_0$.
Then $z_0$ is \emphdef{semiregular} if $\lim_{x\to z_0} u_\phi(x)$ exists for all $\phi\in C(\partial\Omega)$, and \emphdef{strongly irregular} if for all $\phi\in C(\partial\Omega)$ there exists some sequence $(x_n)$ in $\Omega$ with $\lim_{n\to\infty} x_n=z_0$ such that $\lim_{n\to\infty} u_\phi(x_n)=\phi(z_0)$.
Somewhat surprisingly every singular point is of one of these two types by~\cite[Theorem~16 and Corollary~17]{LM81} or~\cite[Theorem~1.1]{BH22}.

It is easy to see that the origin is a semiregular singular point of the punctured disk $\{(x,y)\in\RR^2\setcolon 0<x^2+y^2<1\}$, while $z_0=(0,0,0)$ is a strongly irregular singular point for Lebesgue's domain.

If $z_0$ is a strongly irregular singular point in Theorem~\ref{thm:5.1}, one can sharpen the conclusion there to `$\lim_{x\to z_0}u_\phi(x)$ does not exist'.
So if $\Omega$ in Corollary~\ref{cor:baire} has a strongly irregular singular point, even
\[
    \widetilde{\mathcal{G}} := \{\phi\in C(\partial\Omega) \setcolon u_\phi \in C(\clos{\Omega})\}
\]
is meagre in $C(\partial\Omega)$.
\end{rem}

Theorem~\ref{thm:5.1} is contained in Landkof~\cite[p.\,243, lines 1--5]{La72}.
We will give a very different proof based on the following result. We assume that $d\ge 3$.
\begin{thm}\label{thm:5.2}
Let $z_0\in \partial\Omega$ be a singular point of $\Omega$.
Then there exists an open set $\Omega_0\subset\RR^d$ such that $\clos{\Omega_0}\subset\Omega\cup\{z_0\}$, $z_0\in\partial\Omega_0$ and $z_0$ is a singular point of $\Omega_0$.
\end{thm}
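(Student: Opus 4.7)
The plan is to invoke Wiener's criterion, which for $d\ge 3$ characterises a singular boundary point $z_0$ of a bounded open set $U\subset\RR^d$ via
\[
    \sum_{n=1}^\infty \frac{\capacity(U^c\cap\clos{A_n})}{r_n^{d-2}}<\infty,
\]
where $r_n:=2^{-n}$ and $A_n:=\{x\in\RR^d\setcolon r_{n+1}<\abs{x-z_0}<r_n\}$.
Since $z_0$ is singular for $\Omega$ this series converges for $U=\Omega$, and the goal is to produce an open $\Omega_0\subset B(z_0,1)$ whose complement is likewise Sobolev-thin at $z_0$, yet whose closure meets $\partial\Omega$ only at $z_0$.

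The construction uses continuity of the Sobolev capacity on decreasing closed neighbourhoods of a compact set. Fix a sequence $\epsilon_n>0$ with $\sum\epsilon_n/r_n^{d-2}<\infty$. The set $K_n:=\Omega^c\cap\clos{A_n}$ is compact, and for $\delta>0$ sufficiently small the closed set $K_n^\delta:=\{x\setcolon\dist(x,K_n)\le\delta\}$ is contained in $N_n:=\{r_{n+2}<\abs{x-z_0}<r_{n-1}\}$ and satisfies $\capacity(K_n^\delta)\le\capacity(K_n)+\epsilon_n$ (take a near-optimal $v\in H^1(\RR^d)$ for $K_n$; it already equals $1$ on some open set $W\supset K_n$, hence on $K_n^\delta$ as soon as $\delta<\dist(K_n,W^c)$). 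Choose such a $\delta_n$ and set $U_n:=\{x\setcolon\dist(x,K_n)<\delta_n\}$, so $U_n$ is open and $\clos{U_n}\subset K_n^{\delta_n}\subset N_n$. Define
\[
    F:=\{z_0\}\cup\bigcup_{n\ge 1}\clos{U_n},\qquad \Omega_0:=B(z_0,1)\setminus F.
\]
Because $\clos{U_n}\subset\{\abs{x-z_0}\le r_{n-1}\}$, the only accumulation point of $\bigcup_n\clos{U_n}$ not already contained in the union is $z_0$. Hence $F$ is closed and $\Omega_0$ is open.

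I verify the three required properties. For $\clos{\Omega_0}\subset\Omega\cup\{z_0\}$: any $p\in(\Omega^c\setminus\{z_0\})\cap B(z_0,1)$ lies in some $K_n$, hence in the open set $U_n\subset F$; so an open neighbourhood of $p$ lies in $F$, disjoint from $\Omega_0$, and $p\notin\clos{\Omega_0}$. For $z_0\in\partial\Omega_0$: trivially $z_0\notin\Omega_0$; conversely, since $\capacity(\clos{U_m})\le\capacity(K_m)+\epsilon_m=o(r_m^{d-2})$ (as $m$-th term of a convergent series divided by $r_m^{d-2}$), subadditivity and the bounded overlap $U_m\cap A_n=\emptyset$ for $\abs{m-n}\ge 2$ give $\capacity(F\cap\clos{A_n})\le\sum_{\abs{m-n}\le 1}\capacity(\clos{U_m})=o(r_n^{d-2})$, which is strictly less than $\capacity(\clos{A_n})$, of order $r_n^{d-2}$, for all large $n$. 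Hence $A_n\setminus F\ne\emptyset$ (otherwise $\clos{A_n}\subset F$), yielding points of $\Omega_0$ converging to $z_0$. For singularity of $z_0$ at $\Omega_0$: since $A_n\subset B(z_0,1)$,
\[
    \Omega_0^c\cap A_n=F\cap A_n\subset\bigcup_{\abs{m-n}\le 1}\clos{U_m},\qquad\sum_{n\ge 1}\frac{\capacity(\Omega_0^c\cap A_n)}{r_n^{d-2}}\le C\sum_{n\ge 1}\frac{\capacity(K_n)+\epsilon_n}{r_n^{d-2}}<\infty,
\]
and Wiener's criterion gives the claim.

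The principal obstacle is to reconcile two competing requirements: $\Omega_0$ must be a strictly interior approximation of $\Omega$ away from $z_0$ (so that $\clos{\Omega_0}$ avoids $\partial\Omega\setminus\{z_0\}$), yet $\Omega_0^c$ must remain Sobolev-thin at $z_0$. A uniform thickening of $\Omega^c$ by a fixed $\eta$-ball would add capacity of order $\eta^{d-2}$ in every annulus and immediately destroy Wiener convergence. The decisive ingredient is the scale-adapted thickening $K_n^{\delta_n}$ with a capacity penalty $\epsilon_n$ that can be made summable against $r_n^{-(d-2)}$; this is exactly the content of the outer-regular/continuity property of the Sobolev capacity on compact sets, and it is this property that makes both requirements compatible.
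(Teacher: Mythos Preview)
Your strategy---Wiener's criterion combined with a scale-adapted open thickening of $\Omega^c$ near $z_0$---is precisely the paper's approach, and the crucial observation that outer regularity of the Sobolev capacity lets you absorb an arbitrarily small capacity penalty $\eps_n$ at each scale is correctly identified and used. There is, however, a genuine slip in the verification of $\clos{\Omega_0}\subset\Omega\cup\{z_0\}$.

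You assert that every $p\in(\Omega^c\setminus\{z_0\})\cap B(z_0,1)$ lies in some $K_n$. But with $n\ge 1$ the closed annuli $\clos{A_n}$ only cover $\{x:0<\abs{x-z_0}\le r_1=\tfrac12\}$, so any $p\in\Omega^c$ with $\tfrac12<\abs{p-z_0}<1$ lies in no $K_n$ and hence not in $F$; such a $p$ then belongs to $\Omega_0$ itself, and $\Omega_0\not\subset\Omega$. For a concrete failure take $\Omega=B(z_0,\tfrac{1}{10})\setminus\{z_0\}$ in $\RR^d$, $d\ge 3$: here $K_n=\emptyset$ for all $n\ge 4$, $F$ is contained in a small neighbourhood of $\clos{B(z_0,\tfrac12)}$, and the entire shell $\tfrac12+\delta_1<\abs{x-z_0}<1$ lies in $\Omega_0\cap\Omega^c$. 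The repair is minor: either include $n=0$ so that $\bigcup_{n\ge 0}\clos{A_n}=\clos{B(z_0,1)}\setminus\{z_0\}$, or set $\Omega_0=B(z_0,r_2)\setminus F$ so that the outer sphere is already covered by $U_1\cup U_2$. The paper sidesteps this bookkeeping by defining instead
\[
    \Omega_0=\{x\in\Omega:\dist(x,\Omega^c)>\dist(x,\RR^d\setminus U)\}
\]
for a suitable thin open set $U\supset\Omega^c\setminus\{z_0\}$ built from nested balls; this forces $\Omega_0\subset\Omega$ by construction and reduces the closure check to a short compactness argument.
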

We postpone the proof of Theorem~\ref{thm:5.2} but note that in the case of Lebesgue's domain it readily follows from its construction.
More precisely, given the potential $V\colon D\to(0,\infty)$ from Lemma~\ref{lem:lebesgue-potential-basics}, we had $\Omega=\{x\in D\setcolon \frac{1}{2}<V(x)<2\}$. We can choose e.g.~$\Omega_0=\{x\in D\setcolon \frac{1}{c}<V(r,z)<c\}$ for any $c\in (1,2)$.
Now we can easily prove Theorem~\ref{thm:5.1} using the following criterion for a boundary point to be regular, see~\cite[Theorem~6.74]{AU23}.
\begin{prop}\label{prop:5.4}
Let $z_0\in \partial\Omega$. Then $z_0$ is regular if there exists a \emphdef{global $H^1$-barrier} $b$ at $z_0$; i.e.\ a function $b\in C(\clos{\Omega})\cap H^1(\Omega)$ such that $b(x)>0$ for all $x\in\clos{\Omega}\setminus\{z_0\}$, $b(z_0)=0$ and $-\Delta b\ge 0$ weakly on $\Omega$.
\end{prop}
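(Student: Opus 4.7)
The plan is to sandwich $u_\phi$ between $\pm\varepsilon+Kb$ on $\Omega$ for arbitrary $\varepsilon>0$ and a suitable constant $K>0$, and then let $\varepsilon\to 0$. The barrier $b$ provides a controlled envelope that vanishes precisely at $z_0$, while the maximum principle (Theorem~\ref{thm:2.4}) transfers boundary inequalities into pointwise interior estimates.

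I would first reduce to the case $\phi(z_0)=0$ by subtracting the constant $\phi(z_0)$, using the linearity of $\phi\mapsto u_\phi$ together with Proposition~\ref{prop:2.3} (which handles constant boundary data). Fix $\varepsilon>0$. Using continuity of $\phi$ at $z_0$ I would choose $r>0$ so that $\abs{\phi(z)}\le\varepsilon$ for all $z\in\partial\Omega$ with $\abs{z-z_0}<r$. Since $b$ is continuous on $\clos{\Omega}$ and strictly positive on the compact set $K_r:=\partial\Omega\setminus B(z_0,r)$, the quantity $\delta:=\min\{b(z)\setcolon z\in K_r\}$ is strictly positive. Setting $M:=\norm{\phi}_{C(\partial\Omega)}$ and $K:=M/\delta$, a direct case check on the two pieces of $\partial\Omega$ yields
\[
    -\varepsilon-Kb(z)\le\phi(z)\le\varepsilon+Kb(z)\quad\text{for all }z\in\partial\Omega.
\]

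Next I would establish the upper bound $u_\phi\le\varepsilon+Kb$ on $\Omega$. Set $\Phi:=\varepsilon+Kb\in C(\clos{\Omega})\cap H^1(\Omega)$, so that $\Delta\Phi\in H^{-1}(\Omega)$ and $\Phi$ serves as an admissible extension of its own boundary trace. Let $v\in H^1_0(\Omega)$ be the unique element with $\Delta v=\Delta\Phi$, so that $u_{\Phi|_{\partial\Omega}}=\Phi-v$. The hypothesis $-\Delta b\ge 0$ weakly reads $\int_\Omega\nabla b\nabla w\dx\ge 0$ for every $0\le w\in H^1_0(\Omega)$; testing with $w=v^-\in H^1_0(\Omega)$ and using $\int_\Omega\nabla v\nabla w\dx=K\int_\Omega\nabla b\nabla w\dx$ gives
\[
    0\le K\int_\Omega\nabla b\nabla v^-\dx=\int_\Omega\nabla v\nabla v^-\dx=-\int_\Omega\abs{\nabla v^-}^2\dx,
\]
forcing $v\ge 0$ a.e. Combining with the monotonicity $u_\phi\le u_{\Phi|_{\partial\Omega}}$, which follows from the linearity of $\phi\mapsto u_\phi$ and Theorem~\ref{thm:2.4} applied to the nonnegative boundary datum $\Phi|_{\partial\Omega}-\phi$, I obtain $u_\phi\le\Phi=\varepsilon+Kb$ on $\Omega$. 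Running the same argument with $-\phi$ (which also vanishes at $z_0$) produces a matching lower bound $u_\phi\ge-\varepsilon-K'b$ for some $K'>0$. Since $b(x)\to b(z_0)=0$ as $x\to z_0$, I conclude $\limsup_{x\to z_0}\abs{u_\phi(x)}\le\varepsilon$, and letting $\varepsilon\to 0$ yields $\lim_{x\to z_0}u_\phi(x)=0=\phi(z_0)$, whence $z_0$ is regular.

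The only genuinely delicate step is the sign conclusion $v\ge 0$, which promotes the distributional inequality $-\Delta b\ge 0$ into a pointwise comparison between the variational solution and its superharmonic extension. Everything else — the reduction to $\phi(z_0)=0$, the compactness argument producing $\delta$, and the monotonicity of $\phi\mapsto u_\phi$ — is routine once Theorem~\ref{thm:2.4} is available, and no further appeal to the extension result Theorem~\ref{thm:2.1} is needed because the candidate extension $\Phi=\varepsilon+Kb$ lies in $H^1(\Omega)\cap C(\clos{\Omega})$ by construction.
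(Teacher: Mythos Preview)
The paper does not actually prove Proposition~\ref{prop:5.4}; it only cites \cite[Theorem~6.74]{AU23}. Your argument is correct and self-contained within the framework of the paper: it is the classical barrier sandwiching argument, adapted so that the comparison $u_\phi\le u_{\Phi|_{\partial\Omega}}\le\Phi$ is obtained via the variational maximum principle (Theorem~\ref{thm:2.4}) and the sign analysis of the correction $v\in H^1_0(\Omega)$. The step $v\ge 0$, which you single out as the delicate one, is handled correctly: since $b\in H^1(\Omega)$, the hypothesis $-\Delta b\ge 0$ passes to $\int_\Omega\nabla b\,\nabla w\ge 0$ for all $0\le w\in H^1_0(\Omega)$ by density, and testing the defining identity $\int_\Omega\nabla v\,\nabla w=K\int_\Omega\nabla b\,\nabla w$ with $w=v^-$ yields $-\int_\Omega\abs{\nabla v^-}^2\ge 0$ via the standard disjointness $\nabla v^+\cdot\nabla v^-=0$ a.e. One cosmetic remark: your two-sided bound $-\varepsilon-Kb\le\phi\le\varepsilon+Kb$ already holds on $\partial\Omega$ with the \emph{same} constant $K$, so there is no need to introduce a separate $K'$ for the lower estimate.
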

\begin{proof}[Proof of Theorem~\ref{thm:5.1}]
Let $z_0\in \partial\Omega$ be a singular point for $\Omega$.
By Theorem~\ref{thm:5.2} there exists an open set $\Omega_0\subset\RR^d$ such that $\clos{\Omega_0}\subset\Omega\cup\{z_0\}$ and $z_0\in\partial\Omega_0$ is a singular point with respect to $\Omega_0$.
Let $0\le\phi\in C(\partial\Omega)$ be such that $\phi(z_0)=0$ and $\phi(w)>0$ at a regular point $w\in \partial\Omega$.
Then there exists $0\le\Phi_0\in C^\infty_\cpt(\RR^d)$ such that $\phi_0:=\restrict{\Phi_0}{\partial\Omega}$ satisfies $\phi_0\le\phi$ and $\phi_0(w)>0$.
Let $u_\phi$ and $u_{\phi_0}$ be the variational (equivalently Perron) solutions with respect to $\Omega$.
Then $u_\phi\ge u_{\phi_0}\ge 0$ on $\Omega$ by the maximum principle, and $u_{\phi_0}\ne 0$.
Moreover, $u_{\phi_0}\in H^1(\Omega)$ by Theorem~\ref{thm:2.7}.

Assume now for contradiction that $\lim_{x\to z_0}u_\phi(x)=0$. Let $b := \restrict{u_{\phi_0}}{\Omega_0}$. Then $\lim_{x\to z_0}b(x)=0$ and after continuous extension $b\in C(\clos{\Omega_0})\cap H^1(\Omega_0)$.
Let $x\in\clos{\Omega_0}\setminus\{z_0\}$. Then $b(x)=u_{\phi_0}(x)>0$ by the strong maximum principle~\cite[Section~2.2, Theorem~4]{Eva98} applied to $u_{\phi_0}$ on a sufficiently large subdomain compactly contained in $\Omega$.
So $b$ is a global $H^1$-barrier at $z_0$ with respect to $\Omega_0$.
Thus $z_0$ is regular with respect to $\Omega_0$ by Proposition~\ref{prop:5.4}, a contradiction.
\end{proof}

In the following proof of Theorem~\ref{thm:5.2} we adapt arguments of Björn~\cite[Proof of Theorem~1.1]{Bj07} and~\cite[Proof of Lemma~12.11]{HKM93}.
We recall Wiener's criterion, see e.g.~\cite[Theorem~6.3.3]{AH96}.
\begin{thm}[Wiener]\label{thm:wiener-crit}
Assume $d\ge 3$. Let $z_0\in \partial\Omega$. Then $z_0$ is regular if and only if
\[
    \sum_{j=0}^\infty 2^{j(d-2)}\capacity(\clos{B_j}\setminus\Omega)=\infty,
\]
where $B_j=B(z_0,2^{-j})$.
\end{thm}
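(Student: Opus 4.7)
My plan for Wiener's criterion is to treat the two directions separately, using the equilibrium potentials of the dyadic pieces $E_j:=\clos{B_j}\setminus\Omega$ of the exterior of $\Omega$ as the central object. In dimension $d\ge 3$ the Sobolev capacity is comparable to the Newtonian capacity associated with the fundamental solution $G(x)=c_d|x|^{2-d}$ of $-\Delta$, so for each $E_j$ I may work with its equilibrium measure $\mu_j$ and equilibrium potential $U_j:=G\ast\mu_j$. Each $U_j$ is superharmonic on $\RR^d$, harmonic off $E_j$ (in particular harmonic on $\Omega$), bounded between $0$ and $1$, equals $1$ quasi-everywhere on $E_j$, and has total mass $\mu_j(E_j)=\capacity(E_j)$ up to a universal constant. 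The decisive quantitative input, derived from standard Frostman-type estimates, is that $U_j(x)$ has size comparable to $w_j:=2^{j(d-2)}\capacity(E_j)$ on the scale-$2^{-j}$ shell around $z_0$, and decays like $w_j\,2^{(j-k)(d-2)}$ on annuli $\clos{B_k}\setminus\clos{B_{k+1}}$ with $k<j$.

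For the sufficiency direction (divergent Wiener sum implies $z_0$ is regular), I would construct a global $H^1$-barrier at $z_0$ and invoke Proposition~\ref{prop:5.4}. The idea, following Wiener, is that the combined pull of the equilibrium potentials $U_j$ across scales produces a positive superharmonic function on $\Omega\cap B(z_0,1)$ which attains the value $0$ at $z_0$ precisely because $\sum_j w_j=\infty$ accumulates enough capacitary bite to suppress the harmonic extension near $z_0$. Concretely, using a weighted series of $U_j$ (with weights chosen so that the associated harmonic measure of a fixed ball around $z_0$, viewed from $x\in\Omega$, tends to $1$ as $x\to z_0$), one obtains a bounded superharmonic function on $\Omega\cap B(z_0,r)$ with the required boundary behaviour at $z_0$. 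Extending this local barrier to a global $H^1$-barrier via a smooth cutoff and finite-energy truncation is a routine finishing step.

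For the necessity direction I would argue the contrapositive: if $\sum_j w_j<\infty$, then the superposition $V:=\sum_j U_j$ is a finite bounded superharmonic function on $\RR^d$, satisfying $V(z_0)\le C\sum_j w_j<\infty$ on interior radial approaches from $\Omega$, while $V\ge 1$ quasi-everywhere on each $E_j\subset\RR^d\setminus\Omega$. The two incompatible cluster values of $V$ at $z_0$ translate, via a continuous boundary datum $\phi\in C(\partial\Omega)$ vanishing at $z_0$ and positive on a sufficiently capacitarily large part of $\partial\Omega$ meeting infinitely many $E_j$, into a failure of $u_\phi(x)\to\phi(z_0)$ as $x\to z_0$, using Theorem~\ref{thm:3.2} and comparison with the Perron solution via the maximum principle. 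This shows that $z_0$ is singular, completing the contrapositive.

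The main obstacle is the quantitative capacitary potential theory underlying both directions: the sharp two-sided shell estimates for $U_j$, the equivalence of Sobolev and Newtonian capacities in $d\ge 3$, and the passage from capacitary estimates to harmonic measure estimates strong enough to beat the boundary oscillation of the Perron solution. Choosing the weight sequence for the barrier so that the associated weighted potential has the right infinite value at $z_0$ yet remains finite energy, and keeping track of the dyadic bookkeeping across all scales simultaneously, is the technically hardest part. It is for this reason that Wiener's criterion is classically proved as a self-contained chapter of potential theory rather than in a short argument, and why the authors are content to cite~\cite[Theorem~6.3.3]{AH96}.
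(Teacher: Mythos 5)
The paper does not actually prove this statement: Wiener's criterion is recalled as a known classical theorem with a pointer to \cite[Theorem~6.3.3]{AH96}, and it is then used as a black box in the proof of Theorem~\ref{thm:5.2}. Your sketch correctly identifies the classical route (dyadic pieces $E_j=\clos{B_j}\setminus\Omega$, their equilibrium potentials $U_j$, a Wiener barrier for sufficiency, a superposition of potentials for necessity), and you are right that this is a self-contained chapter of potential theory. But as submitted it is a plan rather than a proof, and the steps it leaves as ``asserted'' are exactly the steps that constitute the theorem.

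Concretely, writing $w_j=2^{j(d-2)}\capacity(E_j)$ as in your sketch: (i) your one explicit quantitative estimate is wrong as stated --- for $k<j$ the potential $U_j$ on the annulus $\clos{B_k}\setminus\clos{B_{k+1}}$ has size about $w_j\,2^{(k-j)(d-2)}$, since it \emph{decays} away from the support of the equilibrium measure; the factor $2^{(j-k)(d-2)}$ you wrote would make it grow, which indicates the dyadic bookkeeping has not actually been carried out. (ii) In the sufficiency direction, the entire content of Wiener's theorem is hidden in the phrase ``weights chosen so that the associated harmonic measure \dots tends to $1$'': one needs the iterated maximum-principle estimate over nested annuli showing that the harmonic measure of $\Omega\cap\partial B(z_0,r)$ seen from $x$ near $z_0$ is bounded by a product of factors $(1-c\,w_j)$, so that divergence of $\sum_j w_j$ forces it to zero; nothing in the proposal establishes this. (iii) In the necessity direction it is not enough that $V=\sum_j U_j$ is finite at $z_0$: one must show $V$ has a cluster value strictly below $1$ at $z_0$ attained \emph{along points of $\Omega$} (using superharmonicity, the mean value inequality and the fact that $\{V<1\}\cap E_j$ is polar), and then compare $u_\phi$ with $V$ via the maximum principle on a small ball to defeat $u_\phi(x)\to\phi(z_0)$; Theorem~\ref{thm:3.2} alone does not deliver this. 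Since the authors take the theorem as given, the citation to \cite[Theorem~6.3.3]{AH96} is the appropriate ``proof'' here; your sketch is correctly oriented but cannot replace it without supplying these three ingredients.
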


\begin{proof}[Proof of Theorem~\ref{thm:5.2}]
Let $z_0\in \partial\Omega$ be a singular point, $B_j := B(z_0,2^{-j})$ for all $j\in\NN_0$ and $E=\RR^d\setminus\Omega$, where $d\ge 3$.
Then $E$ \emphdef{is thin at} $z_0$; i.e.
\[
    \sum_{j=0}^\infty 2^{j(d-2)}\capacity(E\cap\clos{B_j})<\infty.
\]
Fix $\eps_j>0$ for all $j\in\NN_0$ such that $\sum_{j=0}^\infty \eps_j 2^{j(d-2)}<\infty$.
Let $U_0=\RR^d$ and choose for $j=1,2,3,\dots$ iteratively open sets $U_j$ satisfying
\[
    E\cap\clos{B_j}\subset U_j\subset  B_{j-1}\cap U_{j-1}
\]
and
\[
    \capacity(U_j)\le\capacity(E\cap\clos{B_j})+\eps_j.
\]
Let $U = \bigcup_{j=0}^\infty U_j\setminus\clos{B_{j+1}}$.
Then $U$ is open and $z_0\notin U$.
Moreover, $U$ is thin at $z_0$ since
\[
    U\cap B_n = \bigcup_{j\ge n}(U_j\setminus\clos{B_{j+1}})\cap B_n\subset U_n.
\]
Observe that
\begin{align*}
    E\setminus\{z_0\} &= E\setminus \clos{B_1}\cup\bigcup_{j=1}^\infty (E\cap\clos{B_j})\setminus\clos{B_{j+1}} \\
        &\subset \bigcup_{j=0}^\infty U_j\setminus \clos{B_{j+1}} = U.
\end{align*}
Thus $\RR^d\setminus\Omega=E\subset U\cup \{z_0\}$ and $\RR^d\setminus U\subset \Omega\cup\{z_0\}$.
Now let
\[
    \Omega_0 := \{x\in \Omega \setcolon \dist(x,E)>\dist(x,\RR^d\setminus U)\}.
\]
Then $\Omega_0$ is open and $\Omega\setminus U\subset\Omega_0$ so that
\begin{equation}\label{eq:5.1}
    \RR^d\setminus\Omega_0\subset E\cup U = U\cup\{z_0\}.
\end{equation}
Thus $\RR^d\setminus\Omega_0$ is thin at $z_0$.
We show that $z_0\in \partial\Omega_0$.
Since there exists $C_1\ge 1$ such that $\frac{1}{C_1}r^{d-2}\le\capacity(\clos{B(z_0,r)})\le C_1r^{d-2}$ for all $r\in(0,1]$ by~\cite[Propositions~5.1.2 and~5.1.4]{AH96},
\begin{align*}
    \infty &= \sum_{j=1}^\infty 2^{j(d-2)}\capacity(\clos{B_j}) \\
        &\le\sum_{j=1}^\infty 2^{j(d-2)}\capacity(\clos{B_j}\cap U) + \sum_{j=1}^\infty 2^{j(d-2)}\capacity(\clos{B_j}\setminus U).
\end{align*}
Hence
\[
    \sum_{j=1}^\infty 2^{j(d-2)}\capacity(\clos{B_j}\setminus U) = \infty.
\]
Thus we find $x_j\in\clos{B_j}\setminus U$ with $x_j\ne z_0$ for all $j\in\NN$.
Then $\lim_{j\to\infty} x_j=z_0$.
Since $\RR^d\setminus U \subset\Omega_0\cup\{z_0\}$ by~\eqref{eq:5.1}, we have $x_j\in \Omega_0$ for all $j\in\NN$ and therefore $z_0\in\clos{\Omega_0}$.
Since $z_0\notin\Omega_0$ by the definition of $\Omega_0$, it follows that $z_0\in\partial\Omega_0$.

We claim that $\clos{\Omega_0}\subset\Omega\cup\{z_0\}$.
Let $x_n\in\Omega_0$ be such that $\lim_{n\to\infty} x_n = x \ne z_0$.
Then there exists a ball $B=B(z_0,\eps)$ such that $x_n\notin B$ for large $n$.
Then
\begin{equation}\label{eq:5.2}
    \dist(E\setminus B, \RR^d\setminus U)>0.
\end{equation}
In fact, $\RR^d\setminus U$ is compact, $E\setminus B$ closed and $(\RR^d\setminus U)\cap (E\setminus B)=\emptyset$ as $E\setminus B\subset U$.
Suppose for contradiction that $x\notin\Omega$. So $x\in E\setminus B$. Since $\dist(x_n,E)>\dist(x_n,\RR^d\setminus U)$, it follows that $\dist(x,\RR^d\setminus U)=0$.
This contradicts~\eqref{eq:5.2}.
\end{proof}

\def\cprime{$'$}

\end{document}